\let\ea\expandafter
\def\foreachletter#1#2#3{\foreachcount=#1
  \ea\loop\ea\ea\ea#3\@alph\foreachcount
  \advance\foreachcount by 1
  \ifnum\foreachcount<#2\repeat}
\def\foreachLetter#1#2#3{\foreachcount=#1
  \ea\loop\ea\ea\ea#3\@Alph\foreachcount
  \advance\foreachcount by 1
  \ifnum\foreachcount<#2\repeat}
\def\definecal#1{\ea\gdef\csname c#1\endcsname{\ensuremath{\mathcal{#1}}\xspace}}
\def\definefrak#1{\ea\gdef\csname f#1\endcsname{\ensuremath{\mathfrak{#1}}\xspace}}
\newcommand{\nCComon}{\mathrm{CComon}}
\newcommand{\nShuf}{\mathrm{Shuf}}
\newcommand{\op}{^{\mathrm{op}}}
\let\ten\otimes
\DeclareMathOperator\ob{ob}
\let\toto\rightrightarrows
\let\xto\xrightarrow
\def\toiso{\xto{\smash{\raisebox{-.5mm}{$\scriptstyle\sim$}}}}
\let\your@state\state
\def\state#1{\my@state#1}
\def\my@state#1.{\gdef\currthmtype{#1}\your@state{#1.}}
\let\your@staterm\staterm
\def\staterm#1{\my@staterm#1}
\def\my@staterm#1.{\gdef\currthmtype{#1}\your@staterm{#1.}}
\let\your@section\section
\def\section{\gdef\currthmtype{section}\your@section}
\let\your@subsection\subsection
\def\subsection{\gdef\currthmtype{subsection}\your@subsection}
\let\your@figure\figure
\def\figure{\gdef\currthmtype{Figure}\your@figure}
\def\currthmtype{}
\def\cref#1{\ref*{label@name@#1}~\ref{#1}}
  \let\old@label\label%
  \def\label#1{%
    {\let\your@currentlabel\@currentlabel%
      \edef\@currentlabel{\currthmtype}%
      \old@label{label@name@#1}}%
    \old@label{#1}}
\newtheorem{thm}{Theorem}
\let\your@thm\thm
\def\thm{\your@thm\gdef\currthmtype{Theorem}}
\newtheorem{prop}{Proposition}
\let\your@prop\prop
\def\prop{\your@prop\gdef\currthmtype{Proposition}}
\newtheorem{lem}{Lemma}
\let\your@lem\lem
\def\lem{\your@lem\gdef\currthmtype{Lemma}}
\let\your@defn\defn
\def\defn{\your@defn\gdef\currthmtype{Definition}}
\let\your@rmk\rmk
\def\rmk{\your@rmk\gdef\currthmtype{Remark}}
\def\pr@@f[#1]{\subsubsection*{\sc #1.}}
\let\qed\endproof
\let\c@equation\c@subsection
\numberwithin{equation}{section}
\let\ep\varepsilon
\let\si\sigma
\tikzset{ed/.style={auto,inner sep=0pt,font=\scriptsize}} 
\tikzset{>=stealth'}
\tikzset{vert/.style={draw,circle,inner sep=1pt,fill=white}}
\tikzset{vert2/.style={draw,circle,inner sep=2pt,fill=white}}
\def\rdual#1{{#1}^*}
\def\tr{\mathrm{tr}}
\def\idfunc{\mathsf{id}}
\def\id{\mathsf{id}}
\def\eval#1#2{#1 \mathbin{\triangleleft} #2}
\def\abs#1#2{\lambda^{#1} #2}
\let\comult\triangle
\def\inv#1{\overline{#1}}
\def\invt#1{\widehat{#1}}
\def\ec{()} 
\let\types\vdash
\newcommand{\preterm}{\;\mathsf{term}}
\def\s#1{_{\scriptscriptstyle(#1)}}
\def\ss#1#2{_{\scriptscriptstyle(#1)(#2)}}
\newcommand{\actv}[1]{{#1}^\star}
\newcommand{\atleastone}[1]{{#1}_{\scriptscriptstyle \ge 1}}
\newcommand{\oneactv}[1]{{#1}^{\star\scriptscriptstyle\ge 1}}
\newcommand{\zeroactv}[1]{{#1}^{\star\scriptscriptstyle = 0}}
\newcommand{\allactv}[1]{{#1}^{\star}}
\newcommand{\maybeactv}[1]{{#1}^{\star\scriptscriptstyle?}}
\newcommand{\F}[1]{\mathfrak{F}_{#1}}
\def\pair#1#2{\langle #1,#2\rangle}
\def\defeq{\mathrel{\smash{\overset{\scriptscriptstyle\mathsf{def}}{=}}}}
\def\hy{\mathrm{hy}}
\title{A practical type theory for symmetric monoidal categories}
\author{Michael Shulman}\thanks{This material is based
    on research sponsored by The United States Air Force Research
    Laboratory under agreement number FA9550-15-1-0053.  The
    U.S. Government is authorized to reproduce and distribute reprints
    for Governmental purposes notwithstanding any copyright notation
    thereon.  The views and conclusions contained herein are those of
    the author and should not be interpreted as necessarily
    representing the official policies or endorsements, either
    expressed or implied, of the United States Air Force Research
    Laboratory, the U.S. Government, or Carnegie Mellon University.}
\address{Department of Mathematics\\University of San Diego\\
  5998 Alcala Park\\San Diego, CA, 92110\\USA}
\keywords{type theory, symmetric monoidal category, prop, coalgebra}
\begin{document}

\maketitle

\begin{abstract}
  We give a natural-deduction-style type theory for symmetric monoidal
  categories whose judgmental structure directly represents morphisms
  with tensor products in their codomain as well as their domain.  The
  syntax is inspired by Sweedler notation for coalgebras, with
  variables associated to types in the domain and terms associated to
  types in the codomain, allowing types to be treated informally like
  ``sets with elements'' subject to global linearity-like
  restrictions.  We illustrate the usefulness of this type theory with
  various applications to duality, traces, Frobenius monoids, and
  (weak) Hopf monoids.
\end{abstract}

\setcounter{tocdepth}{1}
\tableofcontents

\section{Introduction}
\label{sec:introduction}

\subsection{Type theories for monoidal categories}
\label{sec:ttmon}

Type theories are a powerful tool for reasoning about categorical
structures.  This is best-known in the case of the internal language
of a topos, which is a higher-order intuitionistic logic.  But there
are also weaker type theories that correspond to less
highly-structured categories, such as regular logic for regular
categories, simply typed $\lambda$-calculus for cartesian closed
categories, typed algebraic theories for categories with finite
products, and so on (a good overview can be found in~\cite[Part
D]{ptj:elephant}).

However, type theories seem to be only rarely used to reason about
\emph{non-cartesian monoidal} categories.  Such categories are of
course highly important in both pure mathematics and its applications
(such as quantum mechanics, network theory, etc.), but usually they
are studied using either traditional arrow-theoretic syntax or string
diagram calculus~\cite{js:geom-tenscalc-i,selinger:graphical}.

Type theories corresponding to non-cartesian monoidal categories do
certainly exist --- intuitionistic linear logic for closed symmetric
monoidal categories, ordered logic for non-symmetric monoidal
categories, classical linear logic for $\ast$-autonomous categories
--- but they are not widely used for reasoning about monoidal
categories.  I believe this is largely because their convenience for
practical category-theoretic arguments does not approach that of
cartesian type theories.  The basic problem is that most nontrivial
arguments in monoidal category theory involve tensor product objects
in both the \emph{domain} and the \emph{codomain} of morphisms.

Existing type theories for non-cartesian monoidal categories fall into
two groups, neither of which deals satisfactorily with this issue.
The first is exemplified by intuitionistic linear logic, whose
judgments have many variables as inputs but only a single term as
output:
\[ x:A, y:B, z:C \types f(x,g(y,z)): D .
\]
This allows the terms (such as $f(x,g(y,z))$) to intuitively ``treat
types as if they were sets with elements'' (one of the big advantages
of type-theoretic syntax).  But it privileges tensor products in the
domain (here the domain is semantically $A\otimes B\otimes C$) over
the codomain.  The only way to talk about morphisms into tensor
products is to use the tensor product type constructor:
\[ x:A, y:B \types (y,x): B\otimes A.
\]
This is asymmetric, but more importantly its term syntax is heavy and
unintuitive: in the cartesian case we can use an element $p:A\times B$
by simply projecting out its components $\pi_1(p):A$ and $\pi_2(p):B$,
but in the non-cartesian case we have to ``match'' it and bind both
components as new variables:
\[ p:A\otimes B \types \mathsf{let}\; (x,y)
  \coloneqq p \;\mathsf{in}\; (y,x) : B\otimes A.
\]

The second group of non-cartesian type theories is exemplified by
classical linear logic, whose judgments have multiple inputs as well
as multiple outputs:
\[ A,B,C \types D,E.
\]
This eliminates the asymmetry, but at the expense of no longer having
a concise and intuitive term syntax.  Most commonly such type theories
are presented as sequent calculi without any terms at all.  Term
calculi do exist
(e.g.~\cite{reddy:dirprolog,bcst:natded-coh-wkdistrib,ong:sem-classical,cpt:ll-session})
but usually involve some kind of ``covariables'', which complicate the
interpretation of types as ``sets with elements''.  Moreover, in type
theories of this sort the tensor product appearing in codomains is
usually a \emph{different} one from the one appearing in domains (the
``cotensor product'' of a $\ast$-autonomous category), whereas in
practical applications it happens very frequently that they are the
same.  The only type theories with the same tensor product appearing
in domains and codomains that I know of in the literature
are~\cite{shirahata:seqcalc-cptclosed,duncan:types-qcomp}, which are
sequent calculi without a real term syntax,
and~\cite{hasegawa:thesis}, which does have a term syntax but still
decomposes tensor products by matching.  (The referee has pointed out
that~\cite{co:flang-cyclic} is in a similar spirit to our work, though
not directly comparable.)

The purpose of this paper is to fill this gap, by describing a type
theory for symmetric monoidal categories in which:
\begin{itemize}
\item Judgments have multiple inputs as well as multiple outputs.
\item The tensor products appearing in domains and codomains are the same.
\item There is a convenient and intuitive term syntax without the need
  for ``covariables''.
\item Tensor product types, though rarely needed, have a convenient
  term syntax involving ``projections'', as in the cartesian case,
  rather than ``matches''.
\end{itemize}
This type theory seems to be very convenient in practice for reasoning
about symmetric monoidal categories.  We will show this in
\cref{sec:examples} with a number of examples; here we sketch two of
them to whet the reader's appetite.

For the first, recall that in a compact closed category, the
\textbf{trace} $\tr(f)$ of an endomorphism $f:A\to A$ is the composite
\[ I \xto{\eta} A \otimes A^* \xto{f\otimes \id}
  A\otimes A^* \toiso A^* \otimes A \xto{\ep} I
\]
where $I$ is the unit object, $A^*$ is the dual of $A$, and $\eta$ and
$\ep$ are the unit and counit of the duality (also called the
coevaluation and evaluation, respectively).  A fundamental property of
trace is \emph{cyclicity}, i.e.\ $\tr(f g) = \tr(g f)$ for any
$f:A\to B$ and $g:B\to A$.  The proof of this is essentially
incomprehensible when written with composites of morphisms:
\begin{multline*}
  \scriptstyle\tr(fg) = \ep \fs (fg \ten \id) \eta = \ep\fs(f\ten
  \id)(g\ten \id)\eta =
  \ep\fs(f\ten \id)(\id\ten\ep\ten\id)(\eta\ten\id\ten\id)(g\ten \id)\eta =\\
  \scriptstyle\ep\fs(\id\ten\ep\ten\id)(f\ten
  \id\ten\id\ten\id)(\id\ten\id\ten g\ten \id)(\eta\ten \eta) =
  (\ep\ten\ep)\fs(\id\ten\id\ten g\ten \id)(f\ten \id\ten\id\ten\id)(\eta\ten \eta) =\\
  \scriptstyle(\ep\ten\ep)\fs(\id\ten\id\ten g\ten
  \id)(\id\ten\id\ten \eta)(f\ten \id)\eta =
  \ep\fs(\id\ten\ep\ten\id)(g\ten\id\ten\id\ten\id)(\eta\ten\id\ten\id)(f\ten\id)\eta =\\
  \scriptstyle\ep\fs(g\ten\id)(\id\ten\ep\ten\id)(\eta\ten\id\ten\id)(f\ten\id)\eta
  = \ep\fs(g\ten\id)(f\ten\id)\eta = \ep\fs(gf\ten\id)\eta = \tr(gf).
\end{multline*}
It becomes much more understandable when written in string diagram
calculus, as in
\cref{fig:cyclicity}.
\begin{figure}
  \begin{tabular}{m{10mm}m{2mm}m{22mm}m{2mm}m{22mm}m{2mm}m{19mm}m{0mm}m{10mm}}
  \begin{tikzpicture}[scale=.7]
   \node[vert2](f) at (0, -1){$g$};
   \node[vert](g) at (0, -3){$f$};
   
   \draw[->](f)--node[ed,swap]{$A$}(g);

   \draw[->](g)to [out=-90, in =90] (.75, -5)to [out=-90, in =-90, looseness=2] (-.0,-5)
      to [out=90, in =-90, looseness=1] 
      (.75,-3.25)--node[ed,swap, at end]{$\rdual{B}$}(.75, 0)
      to [out=90, in =90, looseness=2] (0, 0)--
     node[ed,swap, at start]{$B$}(f);


\end{tikzpicture}
&=&

  \begin{tikzpicture}[scale=.8]
   \node[vert2](f) at (0, -1){$g$};
   \node[vert](g) at (-1.5, -3){$f$};
   
   \draw[->](f)--node[ed]{$A$}(0, -2) to [out=-90, in =-90, looseness=2]
   (-.75, -2) node[ed, right]{$\rdual{A}$}
     to [out=90, in =90, looseness=2](-1.5,-2)--node[ed, swap, at start]{$A$} (g);

     \draw[->](g)--node[ed, swap]{$B$}(-1.5, -3.5) to [out=-90, in =90]
     (0, -5)to [out=-90, in =-90, looseness=2] (-.75,-5)
      to [out=90, in =-90, looseness=1]
      (.75,-3.5)--node[ed, swap, at end]{$\rdual{B}$}(.75, 0)
      to [out=90, in =90, looseness=2] (0, 0)--
     node[ed, at start, swap]{$B$}(f);

\end{tikzpicture}
&=&

\begin{tikzpicture}[scale=.8]
   \node[vert2](f) at (0, -3){$g$};
   \node[vert](g) at (-1.5, -1){$f$};
   
   \draw[->](f) --node[ed, swap]{$A$} (0, -3.5)
   to [out=-90, in =-90, looseness=2](-.75, -3.5) --
   node[ed, right, at end]{$\rdual{A}$}(-.75, 0)
   to [out=90, in =90, looseness=2](-1.5,0)
   --node[ed, swap, at start]{$A$}  (g);

   \draw[->](g)--node[ed, swap, near start]{$B$}(-1.5, -3.5)
   to [out=-90, in =90](0, -5)to [out=-90, in =-90, looseness=2] (-.75,-5)
   to [out=90, in =-90, looseness=1](.75,-3.5)
   --node[ed, swap, at end]{$\rdual{B}$}(.75, -2) to
   [out=90, in =90, looseness=2] (0, -2)--
            node[ed, at start, swap]{$B$}(f);

\end{tikzpicture} 
&=&

  \begin{tikzpicture}[scale=.8]
   \node[vert2](f) at (-3, -4){$g$};
   \node[vert](g) at (-1.5, -2){$f$};
   
   \draw[->](f) --node[ed, swap]{$A$}(-3, -5) to
   [out=-90, in =-90, looseness=2](-3.75, -5) --
     (-3.75, -1.5) to[out=90, in =-90, looseness=.8]
     node[ed, right, at end]{$\rdual{A}$} (-2.25, 0)
     to [out=90, in =90, looseness=2](-3,0)to
     [out=-90, in =90, looseness=.8] 
node[ed, swap, at start]{$A$}(-1.5, -1.5)-- (g);

\draw [->](g)--node[ed, near end]{$B$}(-1.5, -3)to
[out=-90, in =-90, looseness=2] (-2.25,-3)
to [out=90, in =90]node[ed, swap, at end]{$\rdual{B}$}
(-2.25, -3) to [out=90, in =90, looseness=2] (-3, -3)--
       node[ed, at start, swap]{$A$}(f);

\end{tikzpicture}
&=&
  \begin{tikzpicture}[scale=.8]
   \node[vert](f) at (-1.5, -4){$g$};
   \node[vert2](g) at (-1.5, -2){$f$};
   
   \draw[->](f) --(-1.5, -5) to [out=-90, in =-90, looseness=2](-2.25, -5) --
     (-2.25, -1.5) to[out=90, in =-90, looseness=.8]
     node[ed, right, at end]{$\rdual{A}$} (-1.5, 0)
     to [out=90, in =90, looseness=2](-2.25,0)to[out=-90, in =90, looseness=.8] 
node[ed, swap, at start]{$A$}(-1.5, -1.5)-- (g);

   \draw [->](g)--node[ed]{$B$}(f);

\end{tikzpicture}
\end{tabular}
\caption{Cyclicity of trace with string diagrams}
  \label{fig:cyclicity}
\end{figure}
But in our type theory, the proof is one line of algebra:
\begin{equation*}
\tr(f g) \defeq (\,\mid \eval{\abs{B} y}{fgy})
  = (\,\mid \eval{\abs{B}{y}}{fx},\eval{\abs{A}{x}}{gy})
  = (\,\mid \eval{\abs{A} x}{gfx})
  \defeq \tr(g f).
\end{equation*}
The reader is not expected to understand this proof yet, but to give
some flavor we explain that the two non-definitional equalities are
``$\beta$-reductions for duality''.  The binary operator $\eval{}{}$
denotes the counit $\ep$, while a variable/abstraction pair
$(x,\abs{A}{x})$ denotes the unit $\eta$.  A term of the form
$\eval{\abs{A}{x}}{t}$, where $x$ does not occur in $t$, is a
$\beta$-redex for duality, and in the resulting reduction this term is
eliminated and the occurrence of $x$ elsewhere\footnote{This
  ``elsewhere'' could be anywhere at all.  Thus although $\abs{A}{x}$
  does ``bind'' the variable $x$, it does not delimit its scope.} is
substituted by $t$.  (Semantically, this means applying a zigzag
identity, which in string diagram notation is ``straightening a bent
string''.)  Thus the middle term
$(\,\mid \eval{\abs{B}{y}}{fx},\eval{\abs{A}{x}}{gy})$ contains two
$\beta$-redexes, and the resulting two reductions yield the two terms
on either side.

For our second example, recall that if $x$ is an element of a monoid
$M$, and $\inv{x}$ and $\invt{x}$ are both two-sided
inverses\footnote{In fact, of course, it suffices for $\inv{x}$ to be
  a left inverse and $\invt{x}$ a right inverse.} of $x$, then
$\inv{x}=\invt{x}$ by the following calculation:
\begin{equation}
  \inv{x} = \inv{x}e = \inv{x}(x\invt{x}) = (\inv{x}x)\invt{x} = e\invt{x} = \invt{x}.\label{eq:intro-classical-inversion}
\end{equation}
This same proof can be reproduced in cartesian type theory, and
therefore holds for monoid objects in any cartesian monoidal category.
It follows that if a monoid object $M$ has an ``inversion'' morphism
$i:M\to M$ such that
\begin{equation}\label{eq:intro-inversion}
  \vcenter{\xymatrix@C=1pc{& M\times M \ar[rr]^{i\times 1} && M\times M \ar[dr]^m \\
      M \ar[ur]^{\comult} \ar[dr]_{\comult} \ar[rr]^{!} && 1 \ar[rr]^{e} && M \\
      & M\times M \ar[rr]_{1\times i} && M\times M \ar[ur]_{m}}}
\end{equation}
commutes (thereby making it a group object), then $i$ is the unique
such morphism.

An interesting application of this relies on the fact that the
category $\nCComon(\cC)$ of \emph{cocommutative comonoids} in any
symmetric monoidal category \cC is in fact \emph{cartesian} monoidal,
with the cartesian product being the monoidal structure of \cC.  A
monoid object in $\nCComon(\cC)$ is a \emph{cocommutative bimonoid
  object} in \cC: an object with both a monoid and a cocommutative
comonoid structure, such that the monoid structure maps are comonoid
morphisms (or equivalently the comonoid structure maps are monoid
morphisms).  For any bimonoid object, a morphism $i:M\to M$ satisfying
the analogue of~\eqref{eq:intro-inversion}:
\begin{equation}\label{eq:intro-antipode}
  \vcenter{\xymatrix@C=1pc{& M\otimes M \ar[rr]^{i\otimes 1} && M\otimes M \ar[dr]^m \\
      M \ar[ur]^{\comult} \ar[dr]_{\comult} \ar[rr]^{\varepsilon} && I \ar[rr]^{e} && M \\
      & M\otimes M \ar[rr]_{1\otimes i} && M\otimes M \ar[ur]_{m}}}
\end{equation}
(where $\comult,\varepsilon$ are the comonoid structure) is called an
\emph{antipode}, and a bimonoid object equipped with an antipode is
called a \emph{Hopf monoid}.  Thus, the classical argument for
uniqueness of inverses, phrased in cartesian type theory, implies that
a cocommutative bimonoid object has at most one antipode.  Dualizing,
we can conclude that a \emph{commutative} bimonoid object also has at
most one antipode.

For bimonoids that are neither commutative nor cocommutative,
cartesian type theory cannot help us.  We can manually translate the
classical proof~\eqref{eq:intro-classical-inversion} into commutative
diagrams and then replace all the $\times$'s by $\otimes$'s, but this
is tedious and one feels that there should be a better way.  And
indeed, in our type theory for symmetric monoidal categories, we can
write a very close analogue of~\eqref{eq:intro-classical-inversion}
that applies to arbitrary bimonoids:
\begin{equation}
  \inv{x} = \inv{x}\,e = (\inv{x\s1}\, e \mid \cancel{x\s2}) =
  \inv{x\s1}\, x\s2 \,\invt{x\s3} = (e\, \invt{x\s2} \mid \cancel{x\s1})
  = e\,\invt{x} = \invt{x}.
  \label{eq:intro-antipode-uniq}
\end{equation}
Again, the reader is not expected to understand this completely, but
the resemblance to~\eqref{eq:intro-classical-inversion} should be
clear.  The main differences are the subscripts on the $x$'s and the
insertion of the steps $(\inv{x\s1}\, e \mid \cancel{x\s2})$ and
$(e\, \invt{x\s2} \mid \cancel{x\s1})$.  The subscripts are used to
track applications of the comultiplication $\comult:M\to M\otimes M$.
This is necessary because (unlike in the cartesian case) not all
morphisms are comonoid morphisms (so that $f(x)\s1$ might differ from
$f(x\s1)$), and also because in the absence of cocommutativity the
resulting ``copies'' of $x$ must be distinguished (so that
$(x\s1,x\s2)$ is different from $(x\s2,x\s1)$).  Similarly, the
canceled terms such as $\cancel{x\s2}$ track applications of the
counit $\ep : M\to I$, and the steps involving these are inserted in
order to match the middle composite $M\xto{\ep} I \xto{e} M$
in~\eqref{eq:intro-antipode}.

With this as preview and motivation, we now move on to a somewhat more
detailed description of our type theory.

\subsection{Generalized Sweedler notation}
\label{sec:gen-sweed}

The idea behind our type theory is to formalize and generalize the
informal \emph{Sweedler notation} that is common in coalgebra theory.
A coalgebra is a comonoid in the category of vector spaces, i.e.\ a
vector space $C$ with a comultiplication $\comult : C\to C\otimes C$
that is coassociative and counital.  Since the elements of the tensor
product $C\otimes C$ are finite sums of generating tensors
$c\s1 \otimes c\s2$, we can write
\[ \comult(c) = \sum_i c^i\s1 \otimes c^i\s2.
\]
Sweedler's notation is to omit the index $i$, and sometimes also the
symbol $\sum$, obtaining
\[ \comult(c) = c\s1 \otimes c\s2.
\]
(This is therefore a sort of ``dual'' to the ``Einstein summation
convention'' for products of tensors, which also has a type-theoretic
formalization known to physicists as ``abstract index
notation''~\cite{pr:spinors}.)  For instance, coassociativity can then
be expressed as
\[ c\s1 \otimes c\ss21 \otimes c\ss22 = c\ss11 \otimes c\ss12 \otimes c\s2
\]
(which is then often written as $c\s1 \otimes c\s2 \otimes c\s3$),
and counitality, for a counit $\ep$, can be expressed as
\[ c\s1 \cdot \ep(c\s2) = c = \ep(c\s1) \cdot c\s2
\]
where $\cdot$ denotes scalar multiplication.

We generalize this notation to apply to any morphism
$f:A \to B\otimes C$ in any monoidal category whose codomain is a
tensor product.  Of course we now have to notate \emph{which} morphism
we are talking about, so instead of $c\s1\otimes c\s2$ we write
$f\s1(x) \otimes f\s2(x) : B\otimes C$, where $x:A$ is a formal
variable.
Note that in Sweedler's original notation for coalgebras \emph{in
  vector spaces}, the element $\comult(c)$ really is a sum of
generating tensors $c\s1 \otimes c\s2$, whereas now we are in an
arbitrary monoidal category so that this doesn't even make sense.
Nevertheless, we can still use a similar \emph{formal} notation
governed by appropriate rules, just as in type theory we use variables
and terms assigned to objects of the category as ``types'' even though
the objects of an arbitrary category may not actually have
``elements''.

However, as a nod towards this extra formality (and also an extra step
in the ``types are like sets'' direction), we stop using the symbol
$\otimes$ and write instead a pair $(f\s1(x), f\s2(x)):(B,C)$
(although see \cref{sec:coalg}).  We also have to incorporate an
analogue of ``scalar multiplication'', which in a general monoidal
category means the unit isomorphism $C\otimes I \cong C$; for
consistency we collect all the ``scalars'' in a separate tuple
separated by a bar.  Thus, for instance, the counitality of a
comultiplication $\comult : C\to C\otimes C$ with counit
$\ep : C\to I$ in an arbitrary monoidal category will be expressed as
\[ (\comult\s1(c) \mid \ep(\comult\s2(c))) = c =
  (\comult\s2(c) \mid \ep(\comult\s1(c))).
\]
And the composite of $f:A \otimes B \to C\otimes D$ with
$g:E\otimes D\to F\otimes G$ along the object $D$ is
\[
  \inferrule{x:A, y:B \types (f\s1(x,y),f\s2(x,y)):(C,D) \\
    z:E,w:D \types (g\s1(z,w),g\s2(z,w):(F,G)) }
  {x:A, y:B, z:E \types (f\s1(x,y),g\s1(z,f\s2(x,y)),g\s2(z,f\s2(x,y))):(C,F,G)}
\]
Note that in contrast to intuitionistic linear logic, the variables in
a context are not literally treated ``linearly'' in our terms, since
they can occur multiple times in the multiple ``components'' of a map
$f$.  Instead, the ``usages'' of a variable are controlled by the
codomain arity of the morphisms applied to them.

One further technical device is required to deal with morphisms having
nullary domain.  Suppose we have $f:I\to B\otimes C$, written in our
type theory as $\types (f\s1,f\s2):(B,C)$, and we compose/tensor it
with itself (and apply a symmetry) to get a morphism
$I \to B\otimes B\otimes C\otimes C$.  We would na\"ively write this
as $ \types (f\s1,f\s1,f\s2,f\s2) : (B,B,C,C)$, but this is ambiguous
since we can't tell which $f\s1$ matches which $f\s2$.  Thus, we
disambiguate the possibilities by annotating the terms in pairs, such
as $() \types (f\s1,f'\s1,f\s2,f'\s2)$ or
$() \types (f\s1,f'\s1,f'\s2,f\s2)$.

The ``variable-binding notation'' $(x,\abs{A}{x})$ for duality that we
mentioned earlier is just syntactic sugar for a special case of this:
we use variables like $x$ as the ``labels'' for unit morphisms
$\eta_A : () \to (A,A^*)$ in place of primes, writing $(x,\abs{A}{x})$
instead of $((\eta_A)^x\s1,(\eta_A)^x\s2)$.  Of course, this requires
that the ``variables'' used as labels of this sort are disjoint from
those occurring in the actual context.  The operator $\eval{}{}$ is
just an infix notation for the counit $\ep:(A^*,A)\to ()$.

Another bit of syntactic sugar is that if each type is equipped with
at most one specified comonoid structure, then there is no ambiguity
in reverting back to Sweedler's original notation $(x\s1,x\s2)$
instead of $(\comult\s1(x),\comult\s2(x))$.  For instance, the axiom
of a Frobenius algebra can be written as
\begin{equation}
  (x\s1, x\s2 y) = ((xy)\s1,(xy)\s2) = (x y\s1, y\s2)\label{eq:frobenius}
\end{equation}
while the principal axiom of a bialgebra is
\begin{equation}
  ((xy)\s1,(xy)\s2) = (x\s1 y\s1, x\s2 y\s2).\label{eq:bialg}
\end{equation}

Note that if all types have such a comonoid structure \emph{and} all
morphisms are comonoid morphisms, which in type-theoretic notation
means $f(x)\s{i} = f(x\s{i})$ and $\ep(f(x)) = \ep(x)$, then the
monoidal structure automatically becomes cartesian.  In ordinary type
theory for cartesian monoidal categories, the diagonal (i.e.\ the
comultiplication) is represented by literally duplicating a variable
$(x,x)$; thus our subscripting of variables $(x\s1,x\s2)$ can be
viewed as a minimal modification of this to deal with situations where
the comultiplication is not literally a cartesian diagonal.  (Indeed,
the comultiplication of a coalgebra is often viewed as a non-cartesian
substitute for the diagonal, e.g.\ in the theory of quantum groups.)
We can similarly regard the counit $\ep$ as a ``discarding''
operation, in which case I like to notate $\ep(x)$ by $\cancel{x}$ (as
was done in~\eqref{eq:intro-antipode-uniq}).

\subsection{An alternative notation for coalgebraists}
\label{sec:coalg}

The particular symbols we choose in the syntax of our type theory are,
of course, immaterial.  I personally find the pair notation
$(f\s1(x),f\s2(x))$ evocative and helpful, since I am more used to
thinking of sets than coalgebras.  However, a reader who is more used
to ordinary coalgebras in vector spaces may prefer to retain $\otimes$
as a formal symbol in place of a comma, and use a symbol like $\cdot$
for ``scalar multiplication'' in place of the separating bar $\mid$.
For instance, in this notation the axioms~\eqref{eq:frobenius}
and~\eqref{eq:bialg} become
\begin{mathpar}
  x\s1 \otimes x\s2 y = (xy)\s1 \otimes (xy)\s2 = x y\s1 \otimes y\s2\and
  (xy)\s1 \otimes (xy)\s2 = x\s1 y\s1 \otimes x\s2 y\s2.
\end{mathpar}
while the calculation~\eqref{eq:intro-antipode-uniq} becomes
\[
  \inv{x} = \inv{x}\,e = \inv{x\s1}\, e \cdot \ep(x\s2) =
  \inv{x\s1}\, x\s2 \,\invt{x\s3} = e\, \invt{x\s2} \cdot \ep(x\s1) =
  e\,\invt{x} = \invt{x}.
\]
A reader who prefers this notation is free to use it instead.
(However, one should probably then choose a different notation for the
tensor product \emph{types} discussed in \cref{rmk:tensors}.)

\subsection{Some pluralistic remarks}
\label{sec:vistas}

Since many readers will be more familiar with other syntaxes for
monoidal categories such as intuitionistic linear logic or string
diagrams, it is worth saying a few words about their complementary
relationship to our type theory, and in particular how and why they
can all coexist.

Intuitionistic linear logic was discussed briefly above; from our
present point of view, it has the disadvantages of a heavy and
unintuitive ``match'' syntax for decomposing elements of tensor
products.  However, it also has definite advantages,\footnote{I am
  indebted to Matt Oliveri for these points.} such as an
asymptotically more concise syntax: the present type theory involves
much duplication of terms, since the arguments of a function are
repeated in every component of its output.  This matters little for
the examples we consider here, but might become more important if it
were ever implemented at scale in a computer proof assistant.
Intuitionistic linear logic also cleanly extends to a syntax for
\emph{closed} monoidal categories, whereas with our current type
theory this seems difficult.

String diagrams (a.k.a.\ graphical calculus) are another well-known
and very powerful tool for working with structures in many kinds of
monoidal categories, which also have both strengths and weaknesses.
String diagrams are at their best when dealing with structures whose
axioms ``don't change the topology'', such as dual pairs and Frobenius
algebras, since in this case many proofs are simply topological
deformations.  For structures whose axioms do change the topology,
such as bialgebras and Hopf algebras, string diagrams are still
useful, but in such cases only some of the proof steps can be reduced
to simple topological deformations: generally those steps involving
pure \emph{naturality} conditions.

By contrast, our type theory does not ``see'' the topological nature
of dual pairs and Frobenius algebras.  However, it represents most
naturality conditions as \emph{syntactic identities}, making them
completely invisible; for instance, the two middle steps in
\cref{fig:cyclicity} disappear entirely
in~\eqref{eq:intro-antipode-uniq}.  Moreover, it leverages a different
intuition, allowing us to think of objects of an arbitrary monoidal
category as ``sets'' with ``elements'' (modulo certain ``linearity''
restrictions).  This seems to be particularly useful for coalgebraic
and Hopf-type structures --- perhaps unsurprisingly, given the origin
of our syntax in Sweedler notation.

Another difference is that string diagrams incarnate categorical
duality in an obvious way --- by simply rotating or reflecting a
string diagram --- while our type theory breaks this duality, treating
inputs and outputs differently.  One might consider this an advantage
of string diagrams.  But breaking duality in the syntax is part of
what gives us the above advantages (notably the view of types as
sets), and it also means that duality becomes a nontrivial and useful
technique: in a situation that is not self-dual, we can choose which
orientation of the type theory is most convenient.  For instance, in
\cref{sec:weak-bimonoids} we will use our type theory to show that the
antipode of a (weak) Hopf monoid is a monoid anti-homomorphism; it
then follows by duality that it is also a comonoid anti-homomorphism,
although a direct proof of the latter in our syntax would be less
intuitive.

There is no reason to expect any one notation for symmetric monoidal
categories to be the best in all situations: each has applications to
which it is better-suited and others to which it is not as well
suited.  The new type theory presented here makes no claim to supplant
string diagrams or intuitionistic linear logic; rather it is an
alternative that may be more convenient in \emph{some} applications.
Only time and experience can render a final verdict on the usefulness
of a notation, as well as its generalizations and limitations ---
e.g.\ to what extent can the type theory presented in this paper be
generalized to other kinds of monoidal categories, such as closed,
$\ast$-autonomous, braided, ribbon, and planar monoidal categories,
indexed monoidal
categories, 
(symmetric) monoidal
bicategories, 
and so on?  My primary hope for this paper is to begin a conversation
about type theories in this family and their potential uses.

\subsection{Acknowledgements}
\label{sec:acknowledgements}

I would like to thank Dan Licata, Robin Cockett, Peter LeFanu
Lumsdaine, and Matt Oliveri for useful conversations.

\section{Props}
\label{sec:props}

To simplify and clarify the semantics of our type theory,
we will interpret it in a categorical structure that reflects the
type-theoretic distinction between judgmental and type operations.  To
explain what this means, recall that ordinary cartesian type theory is
often interpreted in categories with finite products; but in this case
both the judgmental comma (in a context $(x:A, y:B, z:C)$) and the
product type operation (in a product type $(A\times B)\times C$) are
interpreted by the same categorical operation (cartesian product
types), which can cause confusion.  A more direct semantics of
cartesian type theory maintains this distinction by using a
``cartesian multicategory'', allowing the judgmental comma to be
interpreted by the ``categorical comma'', i.e.\ the concatenation that
forms a list of objects to be the domain of a morphism in a
multicategory.

For type theories like classical linear logic, which allow multiple
types in both domain and codomain but use commas on the left and right
of the turnstile to represent different tensor products, the
appropriate ``multicategorical'' structure is called a
``polycategory''~\cite{szabo:polycats} (the corresponding ``monoidal''
version being a linearly
distributive~\cite{cs:wkdistrib,bcst:natded-coh-wkdistrib} or
$\ast$-autonomous~\cite{barr:staraut,barr:staraut-ll} category).  In
our case, where the two tensor products are the same, the appropriate
structure is called a \emph{prop}.

\begin{defn}\label{defn:prop}
  A \textbf{prop} $\cP$ consists of
  \begin{enumerate}
  \item A set $\ob(\cP)$ of \textbf{objects}, and\label{item:prop1}
  \item A symmetric strict monoidal category (that is, a symmetric
    monoidal category whose associators and unitors are identities)
    whose underlying monoid of objects is freely generated by
    $\ob(\cP)$.\label{item:prop2}
  \end{enumerate}
\end{defn}

(The original Adams--MacLane~\cite{maclane:cat-alg} definition of prop
had only one object; thus our ``props'' are sometimes called ``colored
props''.)

We write the objects of the monoidal category in~\ref{item:prop2} as
finite lists $(A,B,\dots,Z)$ of objects of the prop (i.e.\
$A,B,\dots,Z\in\ob(\cP)$).  The monoidal structure is given by
concatenation of lists; the unit object is the empty list $()$.

We now summarize the relationship between props and symmetric monoidal
categories.

\begin{defn}\label{defn:prop-tensor}
  A \textbf{tensor product} $A\otimes B$ of objects in a prop is an
  object together with an isomorphism
  \[ (A,B) \toiso (A\otimes B) \]
  in the monoidal category of \cref{defn:prop}\ref{item:prop2}.
  Similarly, a \textbf{unit} is an object $I$ with an isomorphism
  \[ () \toiso (I) .
  \]
  A prop is called \textbf{representable} if it has a unit and any
  pair of objects has a tensor product.
\end{defn}

A \textbf{morphism of props} $\omega : \cP\to\cQ$ is a function
$\omega_0 : \ob(\cP) \to \ob(\cQ)$ together with a \emph{strict}
symmetric monoidal functor whose action on objects is obtained by
applying $\omega_0$ elementwise to lists.

\begin{thm}\label{thm:smc-prop}
  The category of symmetric monoidal categories and strong symmetric
  monoidal functors is equivalent to the subcategory of representable
  props (and all morphisms between them).
\end{thm}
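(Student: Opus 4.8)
The strategy is to exhibit an explicit equivalence of categories. In one direction, given a symmetric monoidal category $\cC$, I would produce a representable prop $\cP_\cC$; in the other, given a representable prop $\cP$, I would extract a symmetric monoidal category $\cC_\cP$. The key conceptual point is that a prop, by \cref{defn:prop}, is already a symmetric \emph{strict} monoidal category on freely-generated objects, so the content of the theorem is about (a) strictification and (b) keeping track of the distinction between the generating objects and the freely-generated lists.

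For the functor $\cC \mapsto \cP_\cC$: take $\ob(\cP_\cC)$ to be the set of objects of $\cC$, and let the monoidal category in \cref{defn:prop}\ref{item:prop2} be a strictification of $\cC$ whose objects are finite lists of objects of $\cC$, with $(A_1,\dots,A_n)$ interpreted semantically as $A_1 \otimes (A_2 \otimes (\cdots))$ (or via any fixed bracketing/MacLane coherence). Concretely, $\cP_\cC\big((A_1,\dots,A_m),(B_1,\dots,B_n)\big) := \cC(A_1\otimes\cdots\otimes A_m,\ B_1\otimes\cdots\otimes B_n)$, with the empty list sent to $I$. Associators and unitors become identities by construction, symmetry is inherited, and the monoid of objects is the free monoid on $\ob\cC$. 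This prop is representable: $(A,B)$ and $(A\otimes B)$ are both sent to $A\otimes B\in\cC$ and the required isomorphism is the identity (or the canonical coherence iso). A strong symmetric monoidal functor $F:\cC\to\cD$ induces a prop morphism by $F$ on objects and by $\cC(\bigotimes A_i,\bigotimes B_j)\to\cD(\bigotimes FA_i,\bigotimes FB_j)$ using the (coherent) laxity/oplaxity isomorphisms of $F$; functoriality and compatibility with the monoidal structure follow from coherence.

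For the reverse functor $\cP\mapsto\cC_\cP$ on representable props: choose, for every object, a tensor product and choose a unit (one needs an appeal to a global choice here, or one works up to equivalence). Let $\ob(\cC_\cP)$ be $\ob(\cP)$, let $\cC_\cP(A,B):=\cP((A),(B))$, and define $A\otimes_{\cC}B$ by the chosen tensor product object with the isomorphism $(A,B)\toiso(A\otimes B)$ transporting the monoidal structure of \cref{defn:prop}\ref{item:prop2} along these length-$1$-vs.-length-$2$ comparisons; the associator and unitor of $\cC_\cP$ are then the unique coherence isomorphisms assembled from these transport isomorphisms (they are genuinely non-identity in general, which is exactly why $\cC_\cP$ need not be strict even though $\cP$ is). Symmetry transports similarly. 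A prop morphism between representable props restricts to a functor on the length-$1$ parts and, because it is a strict monoidal functor, commutes with concatenation, hence with the chosen tensor products up to the comparison isomorphisms, giving a strong symmetric monoidal functor.

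Finally I would check that these two assignments are quasi-inverse. Starting from $\cC$, strictifying, and then extracting the length-$1$ part recovers $\cC$ up to monoidal equivalence — this is precisely MacLane's strictification theorem, and the unit/counit of that equivalence assemble into natural monoidal isomorphisms $\cC\simeq \cC_{\cP_\cC}$. Starting from a representable prop $\cP$, forming $\cC_\cP$ and then $\cP_{\cC_\cP}$ gives back $\cP$ up to prop-isomorphism: every list $(A_1,\dots,A_n)$ in $\cP$ is isomorphic (via iterated use of the chosen tensor-product isomorphisms) to the length-$1$ list on the iterated chosen tensor product, so the evident comparison prop-morphism is essentially surjective, and it is full and faithful because both props have the same hom-sets after transporting along those isomorphisms.

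\textbf{Main obstacle.} The routine parts are the verifications that the constructed data satisfy the axioms; these are all coherence bookkeeping. The genuine work — and the step I expect to be most delicate — is the equivalence $\cC \simeq \cC_{\cP_\cC}$, i.e.\ showing that passing through the strict prop and back is the identity up to coherent monoidal natural isomorphism. This is exactly where MacLane's coherence/strictification theorem is doing the heavy lifting, and one must be careful that the comparison isomorphisms assembled from coherence are themselves natural and satisfy the hexagon/triangle compatibilities, so that $\cC \mapsto \cP_\cC$ and $\cP\mapsto \cC_\cP$ are genuinely functorial and the quasi-inverse data are natural transformations of the appropriate $2$-categorical flavor. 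A secondary subtlety is the use of choice in selecting tensor products and units in a representable prop; this is harmless for an equivalence of categories but should be acknowledged.
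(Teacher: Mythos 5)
Your proposal is correct and takes essentially the same route as the paper: the underlying-prop/strictification construction $\cC\mapsto\cP_\cC$ in one direction, and extraction of a symmetric monoidal category from a representable prop by choosing tensor products and a unit in the other, with coherence doing the bookkeeping. The only organizational difference is that you build an explicit quasi-inverse and check both round trips, whereas the paper verifies directly that the underlying-prop functor is fully faithful (the monoidal constraints of a strong functor corresponding exactly to the action of a prop morphism on the representing isomorphisms) and that every representable prop is isomorphic to one in its image.
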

\begin{proof}[Sketch of proof]
  Every symmetric monoidal category \cC has an underlying prop $U\cC$
  with the same objects, and in which a morphism
  $(A,\dots,B) \to (C,\dots, D)$ is a morphism
  $A\otimes \cdots \otimes B \to C\otimes\cdots \otimes D$ in \cC.
  This construction $U$ is functorial on strong symmetric monoidal
  functors, and the props in its image are representable.  (In fact,
  it is the functorial strictification.)  The functor $U$ is faithful
  since the action of $f:\cC\to\cD$ on objects and arrows is preserved
  in $U f$, while the coherence constraints of $f$ are recorded in the
  action of $U f$ on the isomorphisms from \cref{defn:prop-tensor}.
  Similarly, the functor $U$ is full, since the action of a morphism
  $U \cC\to U\cD$ on the isomorphisms from \cref{defn:prop-tensor}
  induces symmetric monoidal constraints on its underlying ordinary
  functor.  Finally, every representable prop induces (by choosing
  tensor products and a unit) a symmetric monoidal structure on its
  category of unary and co-unary morphisms, and it is isomorphic to
  the underlying prop thereof.
\end{proof}

We can say more: this subcategory is an injectivity-class, and the
corresponding category of \emph{algebraic} injectives (objects
equipped with chosen lifts against the generating morphisms, and maps
that preserve the chosen lifts) is equivalent to the category of
symmetric monoidal categories and \emph{strict} symmetric monoidal
functors.  In particular, the latter is monadic over the category of
props.

The fundamental ``initiality theorem'' for semantics of our type
theory, which we prove in \cref{sec:semantics}, is that the ``term
model'' is the prop freely generated by some input data.
Following~\cite{bcr:props}, we will call this input data a
\emph{signature}.

\begin{defn}
  A \textbf{signature} \cG is a set of objects together with a set of
  arrows, each assigned a domain and codomain that are both finite
  lists of objects.
\end{defn}

\begin{thm}\label{thm:props-monadic}
  The category of props is monadic over the category of signatures.
\end{thm}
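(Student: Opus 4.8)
The plan is to apply Beck's monadicity theorem to the evident forgetful functor $U:\bProp\to\mathbf{Sig}$, which sends a prop $\cP$ to the signature whose object set is $\ob(\cP)$ and whose arrows are the morphisms $\Gamma\to\Delta$ of the underlying symmetric strict monoidal category, retaining their domains and codomains (and acts on prop morphisms by their action on objects and morphisms). It then suffices to (i) exhibit a left adjoint to $U$, (ii) show that $U$ reflects isomorphisms, and (iii) show that $U$ creates coequalizers of $U$-split pairs (i.e.\ of those parallel pairs whose image under $U$ extends to a split coequalizer diagram).

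For (i) I would build the free prop $F\cG$ on a signature $\cG$ as a term model: put $\ob(F\cG)=\ob(\cG)$, and let a morphism $\Gamma\to\Delta$ of $F\cG$ be an equivalence class of formal expressions built from the arrows of $\cG$, identities and symmetries using composition and tensor, modulo precisely the equations forcing these operations to be a symmetric strict monoidal structure on the free monoid on $\ob(\cG)$. The unit $\cG\to UF\cG$ is the inclusion of generators; any signature map $\cG\to U\cP$ extends along it by recursion on expressions, well-definedly because $\cP$ obeys the symmetric-strict-monoidal laws and uniquely because those expressions generate $F\cG$, so $F\adj U$. (This is exactly the term model of \cref{sec:semantics} specialised to an empty set of equational axioms, which one could instead simply cite.) For (ii): $U$ is faithful, since a strict symmetric monoidal functor is determined by its effect on objects and on morphisms; and if $U\omega$ is bijective on objects and on arrows, the inverse bijections assemble into a functor $\cQ\to\cP$ that is again strict symmetric monoidal --- being inverse to a bijective-on-objects, fully faithful strict monoidal functor --- and acts elementwise on object-lists, hence is a prop morphism inverse to $\omega$; thus $U$ reflects isomorphisms.

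Step (iii) is the heart of the matter. Given $f,g:\cP\rightrightarrows\cQ$ in $\bProp$ such that $Uf,Ug$ has a split coequalizer $q:U\cQ\to\cR$ in $\mathbf{Sig}$, I would equip $\cR$ with a prop structure by transport along the splitting: the objects of $\cR$ already form the (absolute) coequalizer of object sets, and composition, identities, tensor and symmetries on the arrows of $\cR$ are defined by lifting representatives along the section of $q$, computing in $\cQ$, and projecting back by $q$. The split-coequalizer identities guarantee that this is independent of the chosen representatives and that every symmetric-strict-monoidal axiom passes from $\cQ$ to $\cR$; they also show that $q$ is then a prop morphism coequalizing $f$ and $g$ universally in $\bProp$, and that applying $U$ recovers the original split coequalizer. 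Any other prop structure on $\cR$ through which $q$ coequalizes $f,g$ agrees with this one by the universal property, so the lift is unique and $U$ creates the coequalizer; Beck's theorem then yields that $U$ is monadic.

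I expect (iii) to be the main obstacle: it is the familiar delicate point in monadicity arguments (compare the proof that $\bCat$ is monadic over directed graphs), where one must check that no unexpected arrows or relations appear in $\cR$ beyond those dictated by $\cQ$ and the splitting, so that the transported operations really are well-defined and satisfy the prop axioms on the nose. If one prefers to bypass split coequalizers, an essentially equivalent route is available: $\bProp$ is cocomplete --- a colimit is formed by taking the colimit of underlying object sets and then presenting the hom-sets by generators and relations --- and $U$ preserves reflexive coequalizers by the standard ``insert identities to make chains composable'' argument, after which the reflexive-coequalizer form of the monadicity theorem applies using (i) and (ii).
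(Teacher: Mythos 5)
The paper itself offers no argument here: it simply defers to \cite[Appendix A.2]{bcr:props}, observing that the many-object case works just like the one-object case proved there. Your proposal supplies the content of that citation as a self-contained verification of Beck's precise monadicity criterion, and its three steps are sound. For (i), citing the term model of \cref{sec:semantics} with an empty set of axioms is legitimate and not circular, since that construction nowhere uses the present theorem. For (iii), the transport of structure along the section works as you say: the section is a morphism of signatures, so it carries composable (and tensorable) data in the quotient to composable data upstairs, the split-coequalizer identities make the quotient map a surjective homomorphism, and the prop axioms and the universal property then descend exactly as in the classical proof that $\bCat$ is monadic over $\bGr$.

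Your closing alternative, however, is wrong: $U$ does \emph{not} preserve reflexive coequalizers, so the crude (reflexive-coequalizer) form of the monadicity theorem is unavailable. Prop morphisms may be non-injective on objects, and identifying objects in a coequalizer of props creates new composites that are absent from the coequalizer of underlying signatures. Concretely, let $\cQ$ be the free prop on objects $a,b,c,d$ with generating arrows $u:(a)\to(b)$ and $v:(c)\to(d)$, let $\cP$ be the coproduct of $\cQ$ with the free prop on one object $e$ (and no arrows), and let the two prop morphisms $\cP\toto\cQ$ restrict to the identity on $\cQ$ and send $e$ to $b$ and to $c$ respectively; the coproduct inclusion $\cQ\to\cP$ is a common section, so the pair is reflexive. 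The coequalizer in props is the free prop on $u:(a)\to(\beta)$, $v:(\beta)\to(d)$ (where $\beta$ is the identified object) and so contains the composite $(a)\to(d)$, whereas the coequalizer of underlying signatures merely relabels the arrows of $U\cQ$, whose hom-set from $(a)$ to $(d)$ is empty. This is the same failure as for $\bCat$ over directed graphs, and it is precisely why one must use $U$-split pairs (split coequalizers being absolute, the splitting dictates all identifications and no unexpected composites arise). So delete that final remark and rest the proof on your main argument, which stands.
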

\begin{proof}
  Just like the proof in~\cite[Appendix A.2]{bcr:props} for the one-object case.
\end{proof}

Thus, every prop has a \emph{presentation} as a coequalizer of a pair
of maps between free props.  In \cref{sec:presentations-props} we will
extend our type theory to construct ``presented props'' as well,
allowing equational reasoning as in the examples from
\cref{sec:ttmon}.

\begin{rmk}\label{rmk:tensors}
  The fact that props allow multiple objects in both domains and
  codomains means that we rarely need to talk about actual tensor
  product types $A\otimes B$ with semantics in tensor product objects
  (\cref{defn:prop-tensor}).  For this reason, we will not include
  such types in our formal system.  However, we note that if
  necessary, they can be added quite easily: because
  \cref{defn:prop-tensor} simply asserts objects, morphisms, and
  equations (rather than a unique factorization property), it can
  essentially be ensured as a special case of a prop presentation.
  Thus we need only add some generating terms and axioms to our type
  theory, for any pair of objects $A,B$ whose tensor product we need
  to talk about:
\begin{mathpar}
  x:A, y:B \types \pair{x}{y}:A\otimes B\and
  p : A\otimes B \types (\pi\s1(p), \pi\s2(p)) : (A,B) \and
  (x,y) = (\pi\s1\pair x y, \pi\s2\pair x y)\and
  p = \pair{\pi\s1(p)}{\pi\s2(p)}.
\end{mathpar}
Note how similar this is to the treatment of cartesian products in
cartesian type theory with pairing and projection operations.
\end{rmk}

\section{On the admissibility of structural rules}
\label{sec:uniqder}

In general, type theories consist of \emph{rules} for deriving
\emph{judgments} about \emph{terms}.  The most common judgments are
\emph{typing judgments} (that a term belongs to a type, or in our case
a tuple of terms belong to a tuple of types) and \emph{equality
  judgments} (that two terms --- or tuples of terms --- are equal).  A
tree of rules ending with a judgment is called a \emph{derivation} of
that judgment, and the categorical structure presented by a type
theory is built out of the \emph{derivable} (or \emph{valid})
judgments.

Now in proving that this categorical structure does in fact have the
desired universal property, it is very useful if we arrange the type
theory so that every derivable judgment has a \emph{unique}
derivation.  The reason for this is that we want a morphism in our
categorical semantics to be determined by a \emph{term itself}, not by
a choice of derivation of that term; but the natural way to prove the
desired universal property (a.k.a.\ the ``initiality theorem'' for
that type theory) is by induction \emph{over derivations}.  Thus, if
the same term can arise from multiple derivations, proving this
universal property requires an extra step of proving that this choice
is immaterial (i.e.\ that any two derivations of the same term
determine the same morphism in the semantics).  This step is tricky
and often omitted in the literature, leading to incomplete proofs.  It
becomes even trickier when considering higher-categorical semantics,
in which the morphisms determined by two derivations of the same term
may be only \emph{isomorphic} rather than equal.

The choice that terms should have unique derivations essentially
requires that nearly all structural rules should be admissible rather
than primitive.  (Recall that an \emph{admissible rule} is one that is
\emph{not} asserted as part of the specification of the type theory,
but for which we can prove after the fact that whenever we have
derivations of its premises we can construct a derivation of its
conclusion --- usually by inductively traversing and modifying the
given derivations of its premises.  The \emph{structural rules} are,
roughly speaking, those that correspond to the operations of the
categorical structure used as the semantics: composition in a
category, permutation of domain lists in a symmetric multicategory,
and permutation of both domains and codomains in a polycategory or
prop.)

The reason this requirement arises is that structural rules generally
have to satisfy equations that are ``tautological'' in their action on
terms.  For instance, composing $f:A\to B$ with $g:B\to C$ and
$h:C\to D$ in the two possible ways (semantically, $h\circ (g\circ f)$
and $(h\circ g)\circ f$) produces the same term:
\begin{mathpar}
\inferrule*{
  \inferrule*{x:A \types f(x): B \\
    y:B \types g(y):C}{x:A \types g(f(x)):C}\\ z:C \types h(z):D}
  {x:A \types h(g(f(x))) :D}\and
\inferrule*{x:A \types f(x): B \\
  \inferrule*{y:B \types g(y):C \\
    z:C \types h(z):D}{y:B \types h(g(y)):D}}
  {x:A \types h(g(f(x))) :D}.\and  
\end{mathpar}
Thus, if composition were a primitive rule, this term would have two
distinct derivations.  But if composition (i.e.\ substitution) is an
admissible rule, then we can \emph{prove} that the derivations
\emph{constructed} by applying it in these two different ways
\emph{turn out to be} the same.  Similarly, if permutation were
primitive, then for any $f:(A,B) \to C$ we would have two (in fact,
infinitely many) derivations of the same term:
\begin{mathpar}
  \inferrule{ }{x:A ,y:B \types f(x,y):C} \and
  \inferrule{\inferrule{\inferrule{ }{x:A ,y:B
        \types f(x,y):C}}{y:B,x:A \types f(x,y) :C }}
  {x:A ,y:B \types f(x,y):C}
\end{mathpar}
whereas if permutation (a.k.a.\ ``exchange'') is admissible, then its
functoriality as an operation on derivations can be proven.

Type theorists know how to make a rule admissible: we build ``just
enough'' of it into the primitive rules.  For instance, if we
introduce generating morphisms such as $f:(A,B) \to C$ and $g:C\to D$
as simple axioms (i.e.\ rules with no premises):
\begin{mathpar}
  \inferrule{ }{x:A, y:B \types f(x,y):C}\and
  \inferrule{ }{z:C \types g(z):D}
\end{mathpar}
then there would be no way to construct derivations of composites such as
\begin{equation}
  \inferrule{x:A, y:B \types f(x,y):C \\
    z:C \types g(z):D}{x:A, y:B \types g(f(x,y)) : D}\label{eq:primcomp}
\end{equation}
except by using a \emph{primitive} composition/substitution rule.
Therefore, we instead introduce each generating morphism in ``Yoneda
style'' by allowing ourselves to \emph{postcompose} any given term(s)
with it.  For instance, in cartesian type theory we introduce
generators with rules such as
\begin{mathpar}
  \inferrule{\Gamma \types a:A \\ \Gamma\types b:B}{\Gamma\types f(a,b):C}\and
  \inferrule{\Gamma\types c:C}{\Gamma\types g(c):D}
\end{mathpar}
for arbitrary contexts $\Gamma$ and terms $a,b,c$.
Now~\eqref{eq:primcomp} can be obtained without a primitive substitution rule:
\begin{equation}
  \inferrule*{\inferrule*{\inferrule*{ }{x:A,y:B \types x:A}\\
      \inferrule*{ }{x:A,y:B \types y:B}}
    {x:A,y:B \types f(x,y):C}}
  {x:A,y:B \types g(f(x,y)):D}\label{eq:admcomp}
\end{equation}
In categorical terms, the point is that we can build the free category
on a graph either by freely adding all binary (and nullary) composites
and then quotienting by the relation of associativity, or we can avoid
the need to quotient at all by defining the morphisms as composable
lists of generating arrows, where a ``list'' is defined inductively as
either an empty list or a list postcomposed by a generating arrow ---
that is, we enforce right-associated composites
$k\circ (h\circ (g\circ (f\circ \id)))$.

This technique is trickier in non-cartesian type theories, since we
cannot keep the same context all the way through the derivation.  That
is, in a cartesian monoidal category we can start~\eqref{eq:admcomp}
with the ``identity'' or ``axiom'' rules $x:A,y:B \types x:A$ and
$x:A,y:B \types y:B$, corresponding categorically to the projections
$A\times B\to A$ and $A\times B\to B$; but in a non-cartesian monoidal
category such projections do not exist.  Thus, we need to concatenate
contexts as we go down the derivation tree.  For instance, the
generator rule for $f:(A,B)\to C$ must be something like
\begin{equation}
  \inferrule{\Gamma\types a:A \\ \Delta\types b:B}
  {\Gamma,\Delta \types f(a,b):C}.\label{eq:gen1}
\end{equation}

However, now we have a new problem: the exchange rule (permutations of
the domain).  In the cartesian case, we can make this admissible by
``propagating it up'' the entire derivation since the context remains
the same.  For instance, if we permute the inputs
of~\eqref{eq:admcomp} we would simply get
\begin{equation}
  \inferrule*{\inferrule*{\inferrule*{ }{y:B,x:A \types x:A}\\
      \inferrule*{ }{y:B,x:A \types y:B}}
    {y:B,x:A \types f(x,y):C}}
  {y:B,x:A \types g(f(x,y)):D}\label{eq:permcomp}
\end{equation}
But in the non-cartesian case this doesn't work.  We don't want to
assert a primitive exchange rule as this would break the ``terms have
unique derivations'' principle, so instead we build exchange into the
generator rule~\eqref{eq:gen1}.  Our first try might be something like
\[
  \inferrule{\Gamma\types a:A \\ \Delta\types b:B \\
    \si:\Gamma,\Delta \cong \Phi}{\Phi \types f(a,b):C}
\]
where $\si$ is an arbitrary permutation.  But this too breaks the
``terms have unique derivations'' principle, since a permutation of
types \emph{within} $\Gamma$ or $\Delta$ could be obtained either as
part of $\si$ or by operating on the input derivations of
$\Gamma\types a:A$ and $\Delta\types b:B$.  Instead we have to build
in ``just enough'' exchange but not too much, by requiring $\si$ to be
not an arbitrary permutation but a \emph{shuffle}: a permutation of
$(\Gamma,\Delta)$ that preserves the \emph{relative} order of the
types in $\Gamma$ and in $\Delta$.  We will write
$\nShuf(\Gamma;\Delta)$ for the set of such shuffles.

This may seem overly technical, but the presence of such things as
shuffles need never be seen by the \emph{user} of the type theory.
Indeed, maintaining the ``terms have unique derivations'' principle is
precisely what \emph{allows} the user to work only with terms,
ignoring the details of the derivations.

\section{The type theory for free props}
\label{sec:technical-details}

Let \cG be a signature; we will define a type theory that presents the
free prop on \cG.  Our general typing judgment will be of the form
shown in \cref{fig:judgments}.  For reasons to be explained later, we
annotate some of the types in the consequent of each judgment with a
superscript star, $\actv{B}$, and call them \emph{active}; we write
$\maybeactv{B}$ to mean that $B$ might be active.
\begin{figure}
  \centering
  \small
  \begin{align*}
    x_1:A_1, \dots ,x_m:A_m &\types (M_1,\dots,M_n \mid Y_1,\dots, Y_p)
    :(\maybeactv{B_1},\dots,\maybeactv{B_n}).\\
    x_1:A_1, \dots ,x_m:A_m &\types
   (M_1,\dots,M_n \mid Y_1,\dots, Y_p) = (N_1,\dots,N_n \mid Z_1,\dots, Z_q)
  :(B_1,\dots,B_n).
  \end{align*}
\caption{Judgments}
\label{fig:judgments}
\end{figure}

We use vector notation $\vec{M},\vec{A},$ etc.\ to indicate a list of
terms or types, so the judgments can be abbreviated as in
\cref{fig:judgments-abbr}, although this omits the information that
$\vec{M}$ and $\vec{N}$ must have the same length as $\vec{B}$ (while
the length of $\vec{Y}$ and $\vec{Z}$ is unrestricted).  If $\vec{Y}$
is empty, we write $(\vec{M}\mid )$ as simply $(\vec{M})$.  If
furthermore $\vec{M}$ and $\vec{B}$ have length 1, we omit the
parentheses, writing simply $M : B$.  When all the lists are empty, we
have $\types () : ()$, which will be valid (it represents the identity
morphism of the unit object).

\begin{figure}
  \centering
  \begin{mathpar}
    \Gamma \types (\vec{M} \mid \vec{Y}) :\maybeactv{\vec{B}}\and
    \Gamma \types (\vec{M} \mid \vec{Y}) = (\vec{N} \mid \vec{Z}) :\vec{B}.
  \end{mathpar}
\caption{Judgments, abbreviated}
\label{fig:judgments-abbr}
\end{figure}

In these judgments $A_i,B_j$ are types, $x_i$ are variables, and
$M_j,N_j,Y_k,Z_\ell$ are terms.  Here by a \emph{type} we simply mean
an object of our generating signature $\cG$, since there are no
type-forming operations in the theory.  There is nothing new in our
\emph{variables}; the reader who prefers de Bruijn indices is free to
think of them in that way, although since our syntax has no variable
binding\footnote{Recall that in the notation $(x,\abs{A} x)$ for the
  unit of a duality, $x$ is not actually a variable in this sense but
  rather a ``label''; we will introduce labels formally in a moment.}
the usual subtleties of capture-avoidance are irrelevant.

The \emph{terms} are defined inductively by the rules shown in
\cref{fig:terms}.  Recall that in general our typing judgments involve
a \emph{list} of such terms, one for each type in the codomain,
together with a list of scalar terms.  The terms defined in
\cref{fig:terms} do not yet have any types, but they are
``well-scoped'' by definition: they come with a context and only use
variables occurring in that context.  We subscript only applications
of functions with greater than unary codomain, and as noted in
\cref{sec:introduction}, we annotate each occurrence of a morphism
with nullary domain and positive-ary codomain with some element of an
infinite alphabet of symbols \fA (such as $','',''',\dots$, or
$1,2,3,\dots$).

\begin{figure}
  \centering
  \begin{mathpar}
  \inferrule{(x:A) \in \Gamma}{\Gamma \types x \preterm}\and
  \inferrule{f\in \cG(;B_1,\dots,B_n)\\
    \fa\in\fA \\ n \ge 2 \\ 1\le k \le n
  }{\Gamma \types f^\fa\s k \preterm}\and
  \inferrule{f\in \cG(;B)\\
    \fa\in\fA
  }{\Gamma \types f^\fa \preterm}\and
  \inferrule{f\in \cG(;)
  }{\Gamma \types f \preterm}\and
  \inferrule{\Gamma \types M_1\preterm \\ \dots \\ \Gamma \types M_m\preterm\\\\
    f\in \cG(A_1,\dots,A_m; B_1,\dots, B_n)\\
    m\ge 1 \\ n \ge 2 \\ 1 \le k \le n
  }{\Gamma \types f\s k(M_1,\dots,M_m) \preterm}\and
  \inferrule{\Gamma \types M_1\preterm \\ \dots \\ \Gamma \types M_m\preterm\\\\
    f\in \cG(A_1,\dots,A_m; B_1,\dots, B_n)\\
    m\ge 1 \\ n \le 1
  }{\Gamma \types f(M_1,\dots,M_m) \preterm}
\end{mathpar}
\caption{Terms}
\label{fig:terms}
\end{figure}

We will write $\vec{f}(\vec{M})$ for the list of all subscriptings of
the application of $f$ to the arguments $\vec{M}$.  For instance, if
$f:(A,B,C)\to (D,E)$ then $\vec f(\vec M)$ would be
\[(f\s1(M_1,M_2,M_3),f\s2(M_1,M_2,M_3)).\]
More generally, for $f\in \cG(A_1,\dots,A_m;B_1,\dots,B_n)$ we have
\[
  \vec{f}(\vec{M}) =
  \begin{cases}
    (f^\fa\s1 ,\dots, f^\fa\s n) &\qquad m=0,\, n\ge 2\\
    f^\fa &\qquad m=0,\, n= 1\\
    f &\qquad m=0,\, n= 0\\
    (f\s1(\vec{M}),\dots,f\s n(\vec{M})) &\qquad m\ge 1,\, n\ge 2\\
    f(\vec{M}) &\qquad m\ge 1,\, n\le 1.
  \end{cases}
\]
In the first and second case, we also write $\vec{f}^\fa$ to notate
the label \fa.  In the third case, we allow ourselves to write
$\vec{f}^\fa$ to mean simply $f$ (discarding the label).  Finally,
when $n\le 1$ we allow ourselves to write $f\s1$ or $f\s1(\vec{M})$ to
mean $f$ or $f(\vec{M})$ respectively.

We define the ``simultaneous substitution'' of a list of terms
$\vec M$ for a list of variables $\vec x$ in the usual way, as shown
in \cref{fig:sub}.

\begin{figure}
  \centering
  \begin{alignat*}{2}
  x_k[M_1,\dots,M_n/x_1,\dots,x_n] &= M_k\\
  y[\vec M/\vec x] &= y &\quad (y\notin \vec x)\\
  f^\fa\s k[\vec M/\vec x] &= f^\fa\s k\\
  f^\fa[\vec M/\vec x] &= f^\fa\\
  f[\vec M/\vec x] &= f\\
  f\s k(N,\dots,P)[\vec M/\vec x] &=
  f\s k(N[\vec M/\vec x],\dots,P[\vec M/\vec x])\\
  f(N,\dots,P)[\vec M/\vec x] &=
  f(N[\vec M/\vec x],\dots,P[\vec M/\vec x])
\end{alignat*}
\caption{Simultaneous substitution into terms}
\label{fig:sub}
\end{figure}

We now move on to the rules governing our typing judgment.  In
\cref{sec:uniqder} we argued that by incorporating Yoneda-style
generator rules and shuffles, we can make composition and exchange
admissible and thereby ensure that any judgment has a unique
derivation.  In the case of props, we also want to make the monoidal
structure admissible (since it satisfies strict associativity and
interchange laws that we would otherwise have to assert as judgmental
equalities).  In particular, for morphisms $f:A\to B$ and $g:C\to D$
we would like the judgment
\[ x:A , y:C \types (f(x), g(y)) : (B,D)
\]
to have a unique derivation.  Symmetry suggests that this unique
derivation cannot apply $f$ first and then $g$ or vice versa.  Thus,
we replace the generator rule by a ``multi-generator'' rule allowing
only a one-step derivation
\[ \inferrule{x:A,y:C \types (x,y):(A,C)}{x:A,y:C \types (f(x),g(y)):(B,D)} \]
A first approximation to the general form of this rule is
\[ \inferrule*{
  \Gamma \types (\vec M,\dots,\vec N,\vec P \mid \vec Z)
  : (\vec A,\dots, \vec B,\vec C)\\
  f \in \cG(\vec A;\vec D)\\\cdots\\
  g \in \cG(\vec B;\vec E)
}{\Gamma \types (\vec f(\vec M),\dots,\vec g(\vec N),\vec P \mid \vec Z)
  : (\vec D,\dots, \vec E,\vec C).}
\]
However, if there are generators with nullary codomain, we need to
collect them into the scalar terms $\vec Z$.  Thus a second
approximation is
\[ \inferrule*{
  \Gamma \types (\vec M,\dots,\vec N,\vec P,\dots,\vec Q,\vec R \mid \vec Z)
  : (\vec A,\dots, \vec B,\vec C,\dots,\vec D, \vec E)\\
  f \in \cG(\vec A;\atleastone{\vec F})\\\cdots\\
  g \in \cG(\vec B;\atleastone{\vec G})\\\\
  h \in \cG(\vec C;)\\\cdots\\
  k \in \cG(\vec D;)
}{\Gamma \types
  \left(\vec f(\vec M),\dots,\vec g(\vec N),\vec R
    \;\middle|\;
    h(\vec P),\dots, k(\vec Q), \vec Z\right)
  : (\vec F,\dots, \vec G,\vec E).}
\]
(Here $\atleastone{\vec F}$ means that $\vec F$ contains at least one type.)
Eventually we will also incorporate shuffles, but we postpone that for now.
Let us consider instead how to prevent duplication of derivations.
In addition to our desired term
\begin{align}
  x:A, y:C &\types (f(x),g(y)):(B,D)\label{eq:prop-good-term}
  \intertext{we must also be able to write both of the following:}
  x:A, y:C &\types (f(x),y):(B,C)\label{eq:prop-goodish-term-1}\\
  x:A, y:C &\types (x,g(y)):(A,D).\label{eq:prop-goodish-term-2}
\end{align}
So how do we prevent ourselves from being able to apply the generator
rule again to the latter two, obtaining two more derivations of the
same morphism as~\eqref{eq:prop-good-term}?

There are different possible choices that one could make here,
potentially leading to different type theories and different ``normal
forms'' for the morphisms in a free prop.  The choice we will make is
to force ourselves to ``apply all functions as soon as possible''.
Thus, for instance, we forbid ourselves from applying $g$ to $y$
in~\eqref{eq:prop-goodish-term-1} because we \emph{could have} already
applied it to produce~\eqref{eq:prop-good-term}.  On the other hand,
we will still allow ourselves to apply $h:(B,C) \to E$
in~\eqref{eq:prop-goodish-term-1} to get
\[ x:A, y:C \types (h(f(x),y)):E
\]
because $h$ has $f$ as an input, hence could not have been applied at
the same time as $f$.

Making this precise is the purpose of designating some of the types in
the consequent as \textbf{active}; recall that we denote the active
types by $\actv{A}$.  If $\vec{A}$ is a list of types, we write
$\oneactv{\vec A}$ to mean that at least one of the types in $\vec A$
is active, $\allactv{\vec A}$ to mean that they are all active, and
$\zeroactv{\vec A}$ to mean that none of them are active.  As noted
previously, we write $\maybeactv{\vec A}$ to avoid specifying whether
or not any of the types are active.

The identity rule will make all types active, while the generator rule
makes only the outputs of the generators active.  We then restrict the
generator rule to require that at least one of the \emph{inputs} of
each generator being applied must be active in the premise; this means
that none of them could have been applied any sooner, since at least
one of their arguments was just introduced by the previous rule.
Thus, our desired derivation
\begin{mathpar}
  \inferrule*{\inferrule*{ }{x:A,y:C \types (x,y):(\actv{A},\actv{C})}}
  {x:A,y:C\types (f(x),g(x)):(\actv{B},\actv{D})}
\end{mathpar}
is allowed, while the undesired one
\begin{equation*}
  \text{\textquestiondown} \qquad
  \inferrule*{\inferrule*{\inferrule*{ }
      {x:A,y:C \types (x,y):(\actv{A},\actv{C})}}
    {x:A,y:C\types (f(x),y):(\actv{B},C)}}
  {x:A,y:C \types (f(x),g(y)):(B,D)} \qquad ?
\end{equation*}
is not allowed, since in the attempted application of $g$ the input
type $C$ is not active.  Thus our generator rule now becomes
\[ \inferrule*{
  \Gamma \types (\vec M,\dots,\vec N,\vec P,\dots,\vec Q,\vec R \mid \vec Z)
  : (\oneactv{\vec A},\dots, \oneactv{\vec B},
  \oneactv{\vec C},\dots,\oneactv{\vec D}, \maybeactv{\vec E})\\
  f \in \cG(\vec A;\atleastone{\vec F})\\\cdots\\
  g \in \cG(\vec B;\atleastone{\vec G})\\\\
  h \in \cG(\vec C;)\\\cdots\\
  k \in \cG(\vec D;)
}{\Gamma \types
  \left(\vec f(\vec M),\dots,\vec g(\vec N),\vec R
    \;\middle|\;
    \vec h(\vec P),\dots,\vec k(\vec Q), \vec Z\right)
  : (\allactv{\vec F},\dots, \allactv{\vec G},\zeroactv{\vec E}).}
\]
Of course, this rule can now never apply to generators with nullary
domain.  Since these can always be applied at the very beginning, we
incorporate them into the identity rule.  Thus the identity rule is
now
\[\inferrule{
      f \in \cG(;\atleastone{\vec{B}})\\\cdots\\
      g \in \cG(;\atleastone{\vec{C}})\\\\
      h \in \cG(;)\\\cdots\\
      k \in \cG(;)\\\\
      \fa,\dots,\fb\in \fA\text{ and pairwise distinct}
    }{\vec x:\vec A\types
      \left(\vec x,{\vec f}^{\fa},\dots,{\vec g}^{\fb} \,\middle|\, h,\dots,k\right)
      :(\allactv{\vec A}, \allactv{\vec B}, \dots,\allactv{\vec C}).}
\]
Note the labels on the terms with nullary domain and positive-ary
codomain, as promised.

Finally, if we want to make the exchange rule admissible, we have to
build permutations into the rules as well.  Each rule should add
exactly the part of a permutation that can't be ``pushed into the
premises''.  Because we've formulated the generator rule so that the
premise and conclusion have the same context, any desired permutation
in the domain can be pushed all the way up to the identity rule.
Thus, for the generator rule it remains to deal with permutation in
the codomain.

The freedom we have in the premises of the generator rule is to
(inductively) permute the types \emph{within} each list
$\vec A,\vec B,\vec C,\vec D,\vec E$, and also to block-permute the
lists $\vec A,\dots,\vec B$ and separately the lists
$\vec C,\dots,\vec D$ (with a corresponding permutation of the
generators $f,\dots,g$ and $h,\dots,k$).  (If we permuted the main
premise any more than this, it would no longer have the requisite
shape to apply the rule to.)  Permutations of $\vec C,\dots,\vec D$
don't do us any good in terms of permuting the codomain of the
conclusion, but we can push permutations of $\vec E$ directly into the
premise, and also a block-permutation of $\vec F,\dots,\vec G$ into a
block-permutation of $\vec A,\dots,\vec B$.

What remains that we have to build into the rule can be described
precisely by a permutation of $\vec F,\dots,\vec G,\vec E$ that (1)
preserves the relative order of the types in $\vec E$, and (2)
preserves the relative order of the \emph{first} types $F_1,\dots,G_1$
in the lists $\vec F,\dots,\vec G$.  That is, any permutation of
$\vec F,\dots,\vec G,\vec E$ can be factored uniquely as one with
these two properties followed by a block sum of a block-permutation of
$\vec F,\dots,\vec G$ with a permutation of $\vec E$.  (The choice of
the first types is arbitrary; we could just as well use the last
types, etc.)

There is no real need to allow ourselves to permute the scalar terms,
since semantically their order doesn't matter anyway.  But it is
convenient to allow ourselves to write the scalar terms in any order,
so we incorporate permutations there too.  The freedom in the premises
allows us to permute the term in $\vec Z$ arbitrarily, and also to
permute the terms $h,\dots,k$ among themselves; thus what remains is
precisely a shuffle.  The final generator rule is therefore the first
rule shown in \cref{fig:props}.

\begin{figure}
  \centering
  \begin{mathpar}
    \inferrule*{
      \Gamma \types (\vec M,\dots,\vec N,\vec P,\dots,\vec Q,\vec R \mid \vec Z)
      : (\oneactv{\vec A},\dots, \oneactv{\vec B},
      \oneactv{\vec C},\dots,\oneactv{\vec D}, \maybeactv{\vec E})\\
      f \in \cG(\vec A;\atleastone{\vec F})\\\cdots\\
      g \in \cG(\vec B;\atleastone{\vec G})\\\\
      h \in \cG(\vec C;)\\\cdots\\
      k \in \cG(\vec D;)\\\\
      \sigma : (\allactv{\vec F},\dots, \allactv{\vec G},\zeroactv{\vec E})
      \toiso \Delta \;\text{preserving activeness}\\\\
      \sigma \text{ preserves the relative order of types in } \vec E\\
      \sigma \text{ preserves the relative order of } F_1,\dots, G_1\\
      \tau \in \nShuf(h,\dots,k;\vec Z)
    }{\Gamma \types
      \left( \sigma\left(\vec f(\vec M),\dots,\vec g(\vec N),\vec R\right)
        \;\middle|\;
        \tau\left(h(\vec P),\dots,k(\vec Q), \vec Z\right)\right)
      : \Delta}
    \and
    \inferrule{
      f \in \cG(;\atleastone{\vec{B}})\\\cdots\\
      g \in \cG(;\atleastone{\vec{C}})\\\\
      h \in \cG(;)\\\cdots\\
      k \in \cG(;)\\\\
      \fa,\dots,\fb\in \fA\text{ and pairwise distinct}\\\\
      \sigma : (\allactv{\vec A}, \allactv{\vec B}, \dots,\allactv{\vec C})
      \toiso \Delta \;\text{preserving activeness}\\\\
      \sigma\text{ preserves the relative order of } B_1,\dots, C_1
    }{\vec x:\vec A\types
      \left(\sigma\left(\vec x,{\vec f}^{\fa},\dots,{\vec g}^{\fb} \right)
        \,\middle|\, h,\dots,k\right)
      :\Delta}
  \end{mathpar}
  \caption{Rules of the typing judgment}
  \label{fig:props}
\end{figure}

In the identity rule, the only useful freedom in the premises is to
block-permute the $\vec B,\dots,\vec C$.  Thus what remains is a
permutation that preserves the relative order of the first types
$B_1,\dots, C_1$.  Any permutation in the scalar terms can be pushed
into the premises, so we have the final rule shown second in
\cref{fig:props}.  Note that this also allows us to incorporate an
arbitrary permutation in the domain.

Having introduced the auxiliary notion of ``active types'' to ensure
that typing judgments have unique derivations, we now proceed to
eliminate it.  We start with the following observation.  The notion of
\emph{subterm} is defined as usual; we write $M\equiv N$ to mean that
$M$ and $N$ are syntactically the same term.

\begin{lem}\label{thm:distinct-terms}
  If $\Gamma \types (\vec M\mid\vec Z):\Delta$ is derivable, and some
  $M_i$ is a subterm of some $M_j$, then $i=j$ (hence
  $M_i\equiv M_j$).
\end{lem}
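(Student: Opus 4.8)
The plan is to prove the statement by induction on the derivation of $\Gamma \types (\vec M \mid \vec Z) : \Delta$, of which there are exactly two cases, one for each rule of \cref{fig:props}. Before splitting into cases I record a consequence of the inductive hypothesis applied to the premise (in the base case this is vacuous): writing $M'_1,\dots,M'_N$ for the main terms of the premise judgment, $(\ast)$ if a premise main term $M'_a$ is a subterm of a term $t$ and $t$ is in turn a subterm of a premise main term $M'_b$, then $a=b$ --- for then $M'_a$ is a subterm of $M'_b$. In particular the premise main terms are pairwise syntactically distinct (apply $(\ast)$ with $t=M'_a$), and no term that contains two distinct premise main terms among its subterms can itself be a subterm of any premise main term.

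\textbf{Base case (identity rule).} The main terms of the conclusion are a permutation, under the $\sigma$ of the rule, of the list $(x_1,\dots,x_m,\vec{f}^{\fa},\dots,\vec{g}^{\fb})$; every entry of this list is a \emph{leaf} of the term grammar of \cref{fig:terms} --- a variable $x_i$, a labelled projection $f^{\fa}\s k$, or $f^{\fa}$ --- and hence has no proper subterms. So $M_i$ being a subterm of $M_j$ already forces $M_i\equiv M_j$, and it suffices to observe that the listed terms are pairwise distinct: the $x_i$ are distinct because a context carries no repeated variable; a variable and a generator term are of different syntactic shape; and $f^{\fa}\s k\equiv g^{\fb}\s{k'}$ would require $\fa=\fb$, which is impossible since the labels $\fa,\dots,\fb$ are required to be pairwise distinct. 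As $\sigma$ only reindexes, this settles the base case.

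\textbf{Inductive step (generator rule).} Here the premise is $\Gamma \types (\vec M,\dots,\vec N,\vec P,\dots,\vec Q,\vec R \mid \vec Z) : (\dots)$, whose blocks $\vec M,\dots,\vec N,\vec P,\dots,\vec Q,\vec R$ are disjoint consecutive sublists exhausting its main terms, with each of $\vec M,\dots,\vec N,\vec P,\dots,\vec Q$ nonempty (every applied generator has at least one argument, one of which is active, so the rule cannot fire on a nullary-domain generator). The main terms of the conclusion are a permutation, under $\sigma$, of $\big(\vec f(\vec M),\dots,\vec g(\vec N),\vec R\big)$, i.e.\ of the terms $f\s k(\vec M)$ (over the applied positive-codomain generators $f,\dots,g$ and all valid subscripts $k$) together with the premise main terms listed in $\vec R$. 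The key sublemma is that \emph{$f\s k(\vec M)$ is not a subterm of any premise main term}: were it a subterm of some $M'_b$, then since the one-or-more entries of $\vec M$ are premise main terms occurring as subterms of $f\s k(\vec M)$, either $|\vec M|\ge 2$, in which case $(\ast)$ identifies two distinct such entries with $M'_b$ --- absurd --- or $\vec M=(M')$, in which case $(\ast)$ gives $M'_b\equiv M'$ and then $f\s k(M')$ is a subterm of $M'$ while $M'$ is a proper subterm of $f\s k(M')$, impossible for finite terms. Granting the sublemma, one checks the remaining pairs among the unpermuted conclusion main terms: $f\s k(\vec M)$ is a proper subterm neither of $f\s{k'}(\vec M)$ with $k'\ne k$ nor of $g\s{k'}(\vec N)$ from a different block, since any proper subterm of either is a subterm of a premise main term, which the sublemma forbids, and these are syntactically distinct (when $k\ne k'$ by the differing subscripts, and for a different block because $\vec M\ne\vec N$ --- distinct nonempty blocks list premise main terms at distinct positions, pairwise distinct by $(\ast)$ --- so their argument lists differ); $f\s k(\vec M)$ is not a subterm of any entry $R_j$ of $\vec R$ by the sublemma, while $R_j$ is not a proper subterm of $f\s k(\vec M)$ because such a subterm would be a subterm of some entry of $\vec M$, a premise main term distinct from $R_j$, contradicting the hypothesis for the premise (and $R_j\not\equiv f\s k(\vec M)$ for the same reason); and for two entries $R_i,R_j$ of $\vec R$ with $i\ne j$ the hypothesis for the premise applies directly. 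Since $\sigma$ merely reindexes, this completes the induction. The step I expect to demand the most care is this sublemma, and within it the degenerate sub-case $|\vec M|=1$, which hinges on the evident but indispensable fact that an application $f\s k(M')$ is syntactically strictly larger than its argument; everything else is routine bookkeeping about the ordered-partition structure that the generator rule imposes on the premise's main terms.
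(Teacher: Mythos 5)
Your proof is correct and takes essentially the same route as the paper's: induction on the derivation, a leaf-term analysis for the identity rule, and for the generator rule the split of conclusion terms into those carried over from the main premise versus new applications $f\s k(\vec M)$, with the inductive hypothesis on the premise ruling out every cross case and forcing syntactically equal new terms to come from the same generator application. The only differences are cosmetic packaging (your sublemma and the contrapositive treatment of the equality case), plus one harmless gloss in the base case: two projections from the \emph{same} nullary generator application share their label and are distinguished by their subscripts $k\ne k'$ rather than by label-distinctness.
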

\begin{proof}
  By induction on the derivation.  In an application of the identity
  rule, the non-scalar terms have no proper subterms, so the
  assumption means that $M_i\equiv M_j$.  And since these terms are
  also uniquely identified by their label and subscript, we have
  $i=j$.

  For an application of the generator rule, all the non-scalar terms
  either occur verbatim in the main premise, or are of the form
  $f\s k(\vec M)$ (including $f(\vec{M})$ when $k=1$) where each $M_i$
  is a non-scalar term in the main premise and $|\vec{M}|\ge 1$.
  These two possibilities are mutually exclusive, and hence partition
  the terms into two classes that we call \emph{old} and \emph{new}
  respectively.

  The inductive hypothesis takes care of the case when both terms are
  old.  If a new term $f\s k(\vec M)$ is a subterm of an old term $N$,
  then each $M_i$ is a proper subterm of $N$, contradicting the
  inductive hypothesis.  This includes the case when an old term
  equals a new term; while if an old term $N$ is a proper subterm of a
  new term $f\s k(\vec M)$, then it must be a subterm of some $M_i$,
  also contradicting the inductive hypothesis.

  If one new term $f\s k(\vec M)$ is a proper subterm of another
  $g\s \ell(\vec N)$, then it must be a subterm of some $N_j$.  Hence
  each $M_i$ must be a proper subterm of $N_j$, contradicting the
  inductive hypothesis.

  Finally, suppose two new terms are syntactically equal,
  $f\s k(\vec M) \equiv g\s \ell(\vec N)$.  Then we must have $f=g$,
  $k=\ell$, and $\vec M \equiv \vec N$.  Note that $f=g$ means that
  $f$ and $g$ are the same function symbol (morphism in \cG), but not
  \emph{a priori} that they arise from the same generator
  \emph{application} in the rule (a given instance of the generator
  rule could apply the same generator more than once).  However,
  $\vec M \equiv \vec N$ and the inductive hypothesis ensure that they
  \emph{do} arise from the same generator application.  Together with
  $k=\ell$, this implies that they have the same place in the given
  judgment as well.
\end{proof}

We are primarily interested in applying \cref{thm:distinct-terms} in
the case of an ``improper subterm'' $M_i\equiv M_j$, but the stronger
hypothesis makes the induction go through more easily.

\begin{rmk}
  Semantically, it is not really necessary to label the morphisms in
  $\cG(;B)$ with a symbol $\fa\in\fA$, since tensor products of such
  morphisms are invariant under permutation (because the swap on the
  unit object is the identity morphism).  However, it would be
  significantly trickier to omit such labels syntactically.  In
  practice, we can leave them off informally and trust the reader to
  put them back if needed.
\end{rmk}

\begin{thm}\label{thm:prop-tad}
  If there is some assignment of activeness to the types in $\Delta$
  such that $\Gamma \types (\vec M\mid\vec Z):\Delta$ is derivable,
  then that assignment is unique, as is the derivation.
\end{thm}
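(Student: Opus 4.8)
The plan is to argue by induction on the structure of the term $(\vec M\mid\vec Z)$, proving the slightly stronger statement that the underlying activeness-free judgment determines the final rule of any derivation together with all of its side data and the (uniquely activeness-decorated) premise. Since every premise of the generator rule has strictly fewer function-symbol occurrences than its conclusion (each applied generator contributes at least one, and at least one generator is applied), while the identity rule has no premises, this induction is well-founded; and once the final rule, its data, and the decorated premise are pinned down, the inductive hypothesis closes the argument and simultaneously forces the activeness on $\Delta$, which is dictated by applying the final rule to the premise's (unique) decoration.

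First I would determine which of the two rules of \cref{fig:props} was used last. Every consequent term of the identity rule is a variable, a (possibly subscripted) labelled term $f^\fa$ with $f\in\cG(;\dots)$, or a constant in $\cG(;)$ --- never a function application with a nonempty argument list --- whereas the generator rule, applying at least one generator of positive-ary domain, always places at least one term of the form $f\s k(\vec M)$ or $h(\vec P)$ with nonempty argument list into the consequent. So the last rule is the identity rule precisely when no component of $(\vec M\mid\vec Z)$ is such an application; this test is purely syntactic. In the identity-rule case the consequent is a leaf, and I would simply read off the context variables and the labelled generator terms (the latter grouped into single multi-output applications by their shared symbol and label) and the scalar constants; the only latitude left, the permutation $\sigma$, is forced by its stipulated order-preservation property, and the activeness is ``all active''.

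The substantive case is the generator rule, where I must recover the partition of the consequent terms into those \emph{produced} by this step and those \emph{retained} from the premise, the generators applied with their argument lists, the permutation $\sigma$ and shuffle $\tau$, and the decorated premise. The crucial device is a syntactic ``generation time'' $\mathrm{gt}$ on terms: $\mathrm{gt}(t)=0$ when $t$ is a variable or a nullary-domain labelled term, and $\mathrm{gt}(f\s k(\vec M))=1+\max_i\mathrm{gt}(M_i)$. The point to establish is that in \emph{any} derivation a term is produced at exactly step $\mathrm{gt}(t)$: the identity rule produces precisely the gt-$0$ terms, and the active-input requirement of the generator rule, combined with the fact that after a generator step the only active types are the ones just produced, forces a generator to be applied one step after the latest of its arguments is produced. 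Hence the terms produced by the \emph{last} step are exactly the consequent terms of maximal $\mathrm{gt}$, a purely syntactic description; \cref{thm:distinct-terms} enters here to guarantee that the arguments hidden inside a produced term never recur as separate consequent terms, so that the tree structure of the consequent (hence $\mathrm{gt}$, and hence the list of consumed arguments) is unambiguous.

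With the produced/retained partition in hand, the remainder is bookkeeping: the applied generators and their argument lists come from collecting produced terms with a common symbol (matching argument lists up as in the last paragraph of the proof of \cref{thm:distinct-terms}); the premise is the concatenation of these argument lists, in the block order dictated by the relative order in the consequent of the leading produced outputs, followed by the retained terms in their consequent order; $\sigma$ and $\tau$ are then the unique maps compatible with their order-preservation and shuffle constraints; and the premise's activeness is the unique one provided by the inductive hypothesis, which necessarily satisfies the rule's ``at least one active input per generator'' side conditions since the given derivation exists. I expect the main obstacle to be exactly the justification that $\mathrm{gt}$ records the step at which a term is produced --- i.e.\ that ``apply every function as soon as possible'' is not merely permitted but \emph{forced} --- since this is the precise interplay between the active-input restriction and the ``retained types become inactive'' clause, which is what the active-type machinery was introduced to control; everything else is routine once that is in place.
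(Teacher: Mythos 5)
Your proposal is correct and follows essentially the same route as the paper: your ``generation time'' $\mathrm{gt}$ is exactly the paper's notion of \emph{depth}, the key claim that the active types are precisely the non-scalar terms of maximal depth (equivalently, that a term is produced at step $\mathrm{gt}$) is the paper's central lemma proved by the same induction, and the reconstruction of the generator groupings via \cref{thm:distinct-terms} and of $\sigma$, $\tau$ via their order-preservation constraints matches the paper's argument step for step.
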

\begin{proof}
  By the \emph{height} of an occurrence of a variable or function
  symbol in a term, we mean its maximum distance to a leaf node in the
  abstract syntax tree representing the term.  Thus the height of a
  variable is $0$, and the height of an occurrence of a function
  symbol is the least natural number strictly greater than the heights
  of the head symbols of all its arguments.
  Note that a nullary function symbol always has height $0$, while a
  function symbol applied to a positive number of variables alone has
  height $1$.

  We claim that in any derivable typing judgment, the terms associated
  to active types are precisely those non-scalar ones whose head
  symbol has maximum height.  The proof is by induction on
  derivations.  In the identity rule, all terms have height $0$ and
  all types are active.  Now consider the generator rule, and suppose
  inductively that the claim is true for the main premise, with
  maximal height $n$, say.  Then since each of the new function
  symbols introduced by the rule is applied to at least one term from
  an active type, which therefore has the maximal height $n$, it must
  have height $n+1$.  It follows that the new maximum height is $n+1$,
  and that these new symbols are precisely those of maximum height;
  but they are also precisely those associated to active types.  This
  proves the claim.

  It follows immediately that the terms uniquely determine the
  activeness of the types, since height is a syntactic invariant of
  the terms.  Moreover, we can tell from the terms which rule must
  have been applied last (if the maximum height is $0$, it must come
  from the identity rule; otherwise it must come from the generator
  rule) and which function symbols that rule must have introduced
  (those of maximum height).

  In the case of the generator rule, we can also conclude that the new
  terms must be precisely those whose head generator symbol has
  maximum height.  We divide these new terms into subsets that have
  the same generator symbol and arguments, so that each subset
  consists of terms $f\s i(\vec{M})$ for fixed $f$ and $\vec{M}$ but
  varying $i$.  By \cref{thm:distinct-terms}, there can be no more
  than one occurrence of a particular term $f\s i(\vec{M})$, so the
  subset of new terms determined by $f$ and $\vec{M}$ must consist of
  exactly the $m$ terms $f\s i(\vec{M})$ for $1\le i\le m$, where $f$
  has $m$ outputs.

  Now the ordering of these function symbol applications as
  $f,\dots,g$ and $h,\dots,k$ must be the order in which the
  corresponding $f\s1,\dots,g\s1$ and $h,\dots,k$ appear in the term
  list, since the permutations $\sigma$ and $\tau$ preserve those
  orders.  Then $\sigma^{-1}$ is uniquely determined by the fact that
  it must place all the outputs of $f$ first, and so on until all the
  outputs of $g$, then all the terms of non-maximum height in the same
  order that they were given in the conclusion.  Similarly,
  $\tau^{-1}$ is uniquely determined by the fact that it has to place
  $h,\dots,k$ first and the scalar terms of non-maximum height last,
  preserving internal order in each group.  Finally, this determines
  the main premise uniquely as well.

  The argument for the identity rule is similar, with no $\tau$ and
  with $\sigma^{-1}$ placing all the variables first in the order of
  the context.  Inductively, therefore, the entire derivation is
  uniquely determined.
\end{proof}

Note that we can regard this proof as an algorithm for
``type-checking'' a judgment without activeness annotations: first we
use the recursive height function on terms to calculate the
activeness, then we proceed as usual to recursively match against the
generator or identity rules.  Because of this theorem, in the future
we will omit the activeness labels.

\cref{thm:prop-tad} can be seen as giving a ``normal form'' for
morphisms in a free prop.  Intuitively, a composite of generating
morphisms and permutations is in normal form if each generator or
permutation is applied ``as soon as possible''; the inductive
definition of judgments with activeness makes precise exactly what
that means.

Another perspective on this involves string diagrams.  From the latter
perspective, a morphism in the free prop generated by $\cG$ can be
represented by an \emph{acyclic directed graph} whose vertices are
labeled by generators and whose edges are labeled by objects.  We
allow ``free edges'' one or both of whose ends do not connect to any
vertex, corresponding to the input and output objects of the morphism.
For instance, such a graph based on generators $f\in \cG(;G,C)$,
$g\in \cG(;D)$, $h\in \cG(C,A;F,E)$, $k\in \cG(B;)$,
$\ell\in \cG(E,D;H)$, and $m(G;)$ is shown in \cref{fig:string1}.
\begin{figure}
  \begin{subfigure}{0.5\textwidth}
  \centering
\begin{tikzpicture}[yscale=1.5]
  \node[vert](f) at (-.5,3) {$f$};
  \node[vert2](g) at (.9,1.8) {$g$};
  \node[vert2](h) at (0,2.1) {$h$};
  \node[vert2](k) at (1.4,1.1) {$k$};
  \node[vert2](m) at (-1,1.5) {$m$};
  \node[vert2](l) at (.5,1) {$\ell$};
  \draw[->] (f)-- node[ed] {$C$} (h);
  \draw[->] (h)-- node[ed] {$E$} (l);
  \draw[->] (g)-- node[ed] {$D$} (l);
  \draw[->] (1.5,3.3) -- node[ed] {$B$} (k);
  \draw[->] (.5,3.3) -- node[ed] {$A$} (h);
  \draw[->] (h) -- node[ed] {$F$} (-.5,0.3);
  \draw[->] (l) -- node[ed] {$H$} (.5,0.3);
  \draw[->] (f) -- node[ed,swap] {$G$} (m);
\end{tikzpicture}
\caption{Not longest-path layered}
\label{fig:string1}
\end{subfigure}
  \begin{subfigure}{0.5\textwidth}
  \centering
\begin{tikzpicture}[yscale=1.5]
  \node[vert](f) at (-.5,3) {$f$};
  \node[vert2](g) at (.9,3) {$g$};
  \node[vert2](h) at (0,2) {$h$};
  \node[vert2](k) at (1.5,2) {$k$};
  \node[vert2](m) at (-1,2) {$m$};
  \node[vert2](l) at (.5,1) {$\ell$};
  \draw[->] (f)-- node[ed,swap] {$C$} (h);
  \draw[->] (h)-- node[ed] {$E$} (l);
  \draw[->] (g)-- node[ed] {$D$} (l);
  \draw[->] (1.5,3.5) -- node[ed] {$B$} (k);
  \draw[->] (.5,3.5) -- node[ed] {$A$} (h);
  \draw[->] (h) -- node[ed] {$F$} (-.5,0.3);
  \draw[->] (l) -- node[ed] {$H$} (.5,0.3);
  \draw[->] (f) -- node[ed,swap] {$G$} (m);
\end{tikzpicture}
\caption{Longest-path layered}
\label{fig:string2}
\end{subfigure}
\caption{String diagrams for a morphism in a free prop}
\end{figure}
The corresponding term in our type theory is
\[ x:A, y:B \vdash (h\s1(f\s2,x), \ell(h\s2(f\s2,x),g) \mid m(f\s1),k(y)) : (F,H)
\]
Applying the algorithm of \cref{thm:prop-tad}, we first calculate the
heights of each generator:
\begin{mathpar}
  f:0\and
  g:0 \and
  h:1 \and
  k:1 \and
  \ell:2\and
  m:1
\end{mathpar}
Therefore, $H$ is the only active type, and the final rule application
must have been a generator rule applying only $\ell$:
\begin{equation}
  \infer{x:A, y:B \vdash (h\s2(f\s2,x),g, h\s1(f\s2,x) \mid m(f\s1),k(y))
    : (\actv{E},D,\actv{F})\\\\
  \ell \in \cG(E,D;H)}
{x:A, y:B \vdash (h\s1(f\s2,x), \ell(h\s2(f\s2,x),g) \mid m(f\s1),k(y))
  : (F,\actv{H})}.\label{eq:deriv3}
\end{equation}
In the premise of this rule, $E$ and $F$ are the active types since
$h$ has height 1.  But $k$ and $m$ also have height 1, so they must
also be applied by the generator rule leading to this judgment:
\begin{equation}
  \infer{x:A, y:B \vdash (f\s2,x,f\s1,y,g)
    : (\actv{C},\actv{A},\actv{G},\actv{B},\actv{D})\\\\
  h\in \cG(C,A;F,E)\\\\ m\in \cG(G;) \\ k\in \cG(B;)}
{x:A, y:B \vdash (h\s2(f\s2,x),g, h\s1(f\s2,x) \mid m(f\s1),k(y))
  : (\actv{E},D,\actv{F})}.\label{eq:deriv2}
\end{equation}
Now in the premise all the types are active and all terms have height
0, so it must have arisen from the identity rule:
\begin{equation}
  \infer{f\in \cG(;G,C) \\ g\in \cG(;D)}{x:A, y:B \vdash (f\s2,x,f\s1,y,g)
    : (\actv{C},\actv{A},\actv{G},\actv{B},\actv{D})}.\label{eq:deriv1}
\end{equation}
Thus we have a complete typing derivation:
\[\tiny
  \infer*{
  \infer*{
    \infer*{f\in \cG(;G,C) \\ g\in \cG(;D)}{x:A, y:B \vdash (f\s2,x,f\s1,y,g)
      : (\actv{C},\actv{A},\actv{G},\actv{B},\actv{D})}
    \\ h\in \cG(C,A;F,E)\\ m\in \cG(G;) \\ k\in \cG(B;)
  }
  {x:A, y:B \vdash (h\s2(f\s2,x),g, h\s1(f\s2,x) \mid m(f\s1),k(y))
    : (\actv{E},D,\actv{F})}
  \\ \ell \in \cG(E,D;H)
  }
  {x:A, y:B \vdash (h\s1(f\s2,x), \ell(h\s2(f\s2,x),g) \mid m(f\s1),k(y))
    : (F,\actv{H})}.
\]
This derivation corresponds to the redrawing of the string diagram
shown in \cref{fig:string2}, in which the vertices appear in
horizontal layers, with the edges always pointing down the page, and
each vertex placed in the highest layer possible.  The topmost layer
with $f$ and $g$ corresponds to the identity rule
application~\eqref{eq:deriv1}, the next layer with $h,m,k$ corresponds
to the generator rule application~\eqref{eq:deriv2}, and the final
layer with $\ell$ corresponds to~\eqref{eq:deriv3}.

The referee has pointed out that this is essentially the ``longest
path layering''~\cite{el:draw-digraph} of an acyclic directed graph
(modulo a suitable correction to deal with vertexless half-edges).
The proof of \cref{thm:prop-tad} can thus equivalently be interpreted
as computing a normal form for a morphism in a free prop by expressing
it as an acyclic directed graph and rearranging it according to its
longest path layering, and also making certain canonical choices
regarding the ordering of vertices along each layer.  It is natural to
expect that other canonical ways of drawing a directed graph might
correspond to other normal forms and hence other type theories for
free props, but we will not investigate this here.

It remains to consider the equality judgment.  We don't have a
traditional form of $\alpha$-conversion since we have no bound
variables as such, but as remarked in \cref{sec:introduction} the
labels $\fa\in\fA$ can sometimes be regarded as playing a similar
role, and in particular the choice of concrete labels must not matter.
We also need to impose invariance under permutation of the scalar
terms.  It may seem silly to have incorporated permutations in the
scalar terms earlier and yet quotient out by that freedom now, but
such an equality rule would be necessary even if we hadn't
incorporated any permutations to start with.  The paradigmatic case is
when we have two nullary scalar generators $f:() \to ()$ and
$g:()\to ()$, leading unavoidably to two distinct valid terms
\begin{mathpar}
   \types (\,\mid f,g) :\ec\and
   \types (\,\mid g,f) :\ec
\end{mathpar}
that must be equal in a monoidal category (since the monoid of
endomorphisms of the unit object is commutative).  It is probably no
coincidence that this is also the case where interesting things happen
upon categorification.

\begin{figure}
  \centering
  \begin{mathpar}
    \inferrule{\Gamma\types (\vec M \mid Z_1,\dots,Z_n) : \Delta \\
      \rho \in S_n \\
      \si : \fA \cong \fA}{\Gamma\types (\vec M \mid Z_1,\dots,Z_n) =
      (\vec M^\si \mid Z^\si_{\rho 1},\dots,Z^\si_{\rho n}) : \Delta}
  \end{mathpar}
  \caption{Rule of the axiom-free equality judgment}
  \label{fig:equality}
\end{figure}

Combining these two permutation rules, we obtain the rule shown in
\cref{fig:equality}.  When we consider \emph{presented} props in
\cref{sec:presentations-props}, with equational theories, we will need
the usual reflexivity, symmetry, transitivity, and congruence rules
for equality, but with only this rule we can omit them since
permutations are already a group.  And since we have no type forming
operations, there are no $\beta$ or $\eta$ rules.

This completes our \textbf{type theory for the free prop generated by \cG}.

\section{Constructing free props from type theory}
\label{sec:semantics}

We now proceed to show that our type theory has the structure of a
prop, beginning with the admissibility of exchange on the right.

\begin{prop}\label{thm:prop-symadm}
  If $\Gamma\types (\vec{M}\mid\vec{Z}) : \Delta$ is derivable and
  $\rho$ is a permutation of $\Delta$, then
  $\Gamma\types (\rho\vec{M}\mid\vec{Z}) : \rho\Delta$ is also
  derivable.  Moreover, this action is functorial.
\end{prop}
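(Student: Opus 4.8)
The plan is to establish derivability of the permuted judgment by induction on the derivation of $\Gamma\types(\vec M\mid\vec Z):\Delta$, which by \cref{thm:prop-tad} is unique, so its last rule is determined: either the identity rule or the generator rule of \cref{fig:props}. In either case the conclusion is produced by applying a residual codomain permutation $\sigma$ — subject to the order-preservation side conditions of the rule — and, in the generator case, a scalar shuffle $\tau$, to a canonical list of terms. A permutation $\rho$ of $\Delta$ touches only the non-scalar terms, so $\tau$ and the scalar data $\vec Z$ are inert and the whole task reduces to replacing $\sigma$ by $\rho\circ\sigma$. The one subtlety is that $\rho\circ\sigma$ need not satisfy the side conditions; this is handled by the unique factorization recorded just before \cref{fig:props}, which splits any permutation of the codomain slots as a side-condition-respecting residual permutation composed with a ``block-diagonal'' permutation — one that only reorders the generator-output blocks $\vec F,\dots,\vec G$ as whole units and separately permutes $\vec E$ (for the generator rule), or only reorders the blocks $\vec B,\dots,\vec C$ (for the identity rule).

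For the generator rule, factor $\rho\circ\sigma=\sigma'\circ\pi$ with $\sigma'$ respecting the side conditions and $\pi$ block-diagonal. The block part of $\pi$ amounts to reordering the list of generators $f,\dots,g$, together with the matching reordering of their domain blocks $\vec A,\dots,\vec B$ and of the argument lists fed to them, while the $\vec E$ part is an internal permutation of $\vec E$ (and of the terms carried over it). Together these form a permutation of the \emph{premise}'s codomain that block-permutes the $\vec A$-blocks, fixes the $\vec C,\dots,\vec D$ portion, and permutes $\vec E$; each block keeps its ``at least one active type'', so by the inductive hypothesis applied to the premise the correspondingly permuted premise is derivable, with exactly the shape needed to re-apply the generator rule with the reordered generator list, residual permutation $\sigma'$, and the same shuffle $\tau$. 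The new conclusion has codomain $(\sigma'\circ\pi)(\text{canonical list})=(\rho\circ\sigma)(\text{canonical list})=\rho\Delta$, and its term list is $\sigma'$ applied to $\pi$ applied to the original $(\vec f(\vec M),\dots,\vec g(\vec N),\vec R)$, which is $\rho$ applied to the original conclusion's term list; hence it is $\Gamma\types(\rho\vec M\mid\vec Z):\rho\Delta$.

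The identity rule is similar and easier: there is no premise and no $\tau$. Factoring $\rho\circ\sigma=\sigma'\circ\pi$, the block part $\pi$ is absorbed simply by re-choosing the order in which the nullary-domain generators $f^\fa,\dots,g^\fb$ are listed in the (still premise-free) identity rule, while $\sigma'$ — which is unconstrained on the variable slots, so may move the $\vec x$'s freely among the outputs — becomes the new residual permutation; the context $\Gamma=\vec x:\vec A$ is unchanged. Functoriality is then immediate from \cref{thm:prop-tad}: since each derivable judgment has a unique derivation, the operation ``act by $\rho$'' applied to the derivation of a judgment $J$ can only yield \emph{the} derivation of $\rho J$, and as $(\rho'\circ\rho)J=\rho'(\rho J)$ and $\id\,J=J$ as judgments, these operations compose and preserve identities (a direct check that the factorizations compose would also work, but is unnecessary).

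I expect the main obstacle to be the bookkeeping in the generator case: verifying that the ``pushable'' part $\pi$ of $\rho\circ\sigma$ indeed decomposes as a block-permutation of the $\vec A$-blocks (with the matching reordering of generators and arguments) plus an internal permutation of $\vec E$, so that the inductive hypothesis applies, and that the leftover factor $\sigma'$ really does meet both order-preservation side conditions. This is routine but fiddly, and it is the uniqueness of the factorization already asserted in the text that keeps it from becoming a lengthy case analysis.
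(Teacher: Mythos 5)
Your proof is correct and takes essentially the same route as the paper's: induct on the (unique) derivation, compose $\rho$ with the rule's built-in $\sigma$, absorb the block-permutation part by reordering the generators while pushing the corresponding permutation of the main premise's codomain (including the $\vec E$ part) into the premise inductively, and let the residual, side-condition-respecting permutation serve as the new $\sigma$, with functoriality immediate from uniqueness of derivations. You simply make explicit the factorization and the functoriality argument that the paper's terse proof leaves implicit.
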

\begin{proof}
  This essentially follows from how we built the rules.  If the
  derivation ends with the identity rule, then we can compose $\rho$
  with the specified permutation $\sigma$ from that rule, and reorder
  the generators $f,\dots,g$ in the rule according to the order that
  $\rho\sigma$ puts them in.  If the derivation ends with the
  generator rule, then we similarly compose $\rho$ with $\sigma$,
  reorder the generators $f,\dots,g$, and inductively push the
  remaining part of the permutation (that acting on the non-active
  terms) into the main premise.  Functoriality follows immediately.
\end{proof}

For admissibility of composition/substitution, it seems helpful to
first prove the admissibility of a single-generator rule.  Note that
we formulate it with the domain of the generator at the \emph{end} of
the given codomain context.

\begin{prop}\label{thm:prop-onecutadm}
  If $\Gamma\types (\vec{M},\vec{N}\mid \vec{Z}): \Delta,\vec A$ is
  derivable and $f\in \cG(\vec A;\vec B)$, then
  $\Gamma\types (\vec{M},\vec{f}(\vec{N})\mid \vec{Z}): \Delta,\vec B$
  is derivable.  Moreover, if none of the types in $\vec A$ are active
  in the given derivation, then all of the types in $\Delta$ that are
  active in the given derivation are still active in the result.
\end{prop}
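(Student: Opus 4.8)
The plan is to induct on the derivation of $\Gamma\types(\vec M,\vec N\mid\vec Z):\Delta,\vec A$, writing $\vec N$ for the block of codomain terms whose types form the final segment $\vec A$, and freely invoking \cref{thm:prop-symadm} to slide a block of terms into whatever position a rule expects. Two easy cases come first. If the derivation ends with the identity rule and $\vec A$ is nonempty, then $\vec N$ consists of active terms, so I apply the generator rule once more with $f$ as the sole generator fed $\vec N$ and everything else passed through; the side-conditions hold because the identity rule leaves every type active. If $\vec A$ is empty --- so $f$ is a nullary-domain generator, $\vec N$ is empty, and $\vec f(\vec N)$ is a fresh-labelled term $\vec f^{\fa}$, or a scalar when $\vec B$ is empty too --- then I recurse to the leaves: every identity rule gains $f$ in its list of nullary generators (with a fresh label from $\fA$ if $\vec B\neq()$), and every generator-rule step is replayed verbatim. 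Since $\vec A$ is empty the ``moreover'' hypothesis is vacuous, and adjoining a leaf generator disturbs no activeness elsewhere.

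The real work is the case where the derivation ends with the generator rule. There the codomain term list is $\sigma$ applied to the freshly introduced generator outputs $\vec f_1(\dots),\dots,\vec g(\dots)$ followed by a block $\vec R$ passed through unchanged from the premise, and the active types of the conclusion are exactly those outputs. I split on how $\vec N$ meets these pieces. If some type of $\vec N$ is active in the conclusion --- i.e.\ $\vec N$ contains a freshly introduced output --- then the ``moreover'' hypothesis fails, so only derivability is needed; here $\vec N$ has an active term, so I apply the generator rule once more on top, with $f$ the sole generator fed $\vec N$ and all else passed through, obtaining $\Gamma\types(\vec M,\vec f(\vec N)\mid\vec Z):\Delta,\vec B$ at once.

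Otherwise no type of $\vec N$ is active in the conclusion, so $\vec N$ lies inside the passed-through block $\vec R$ and comes verbatim from the premise. Now I split once more, on whether $\vec N$ contains an active term of the \emph{premise}. If it does, I fold $f$ into the last generator-rule application as one further parallel generator with input $\vec N$: the input is active in the premise, so the rule still fires, and the outputs of the original generators --- precisely the active types of $\Delta$ --- stay active, so the ``moreover'' conclusion holds. If $\vec N$ has no active term of the premise, I invoke the induction hypothesis on the premise with the generator $f$; because $\vec N$'s types are all inactive there, the ``moreover'' part of the induction hypothesis applies and certifies that every generator-input of the last rule is still active in the modified premise, so I may replay that rule and obtain $\Gamma\types(\vec M,\vec f(\vec N)\mid\vec Z):\Delta,\vec B$ with all of $\Delta$'s active types preserved.

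In each reconstruction there is routine bookkeeping --- choosing the permutation $\sigma$ and shuffle $\tau$ of the replayed rule so as to restore the stated order of codomain and scalar terms, and noting that a nullary-codomain $f$, where $\vec f(\vec N)$ is a scalar and $f$ takes the role of an $h$-generator, is handled the same way. I expect the one genuine obstacle to be exactly the activeness discipline in the last case: the reason the ``moreover'' clause is built into the statement is that it is precisely the extra strength needed, after recursing into the premise, to certify that the premise's generator rule can still be replayed legally --- without it the induction would not close. Everything else is a matter of matching term lists against the two rule shapes of \cref{fig:props}.
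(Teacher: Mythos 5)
Your proof is correct and follows essentially the same route as the paper's: the same induction on the derivation, the same four effective cases (apply the generator rule freshly when some type of $\vec A$ is active in the conclusion; absorb a nullary-domain $f$ into the identity rule; fold $f$ into the final generator rule when $\vec A$ is active only in the main premise; otherwise recurse into the premise and use the ``moreover'' clause to relegitimize the replayed rule), and the same observation that the activeness-preservation clause is exactly what closes the induction. The only difference is cosmetic --- you split first on the shape of the last rule where the paper splits first on whether $\vec A$ contains an active type --- and the resulting cases correspond one-to-one.
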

\begin{proof}
  If any of the types in $\vec A$ are active, we can simply apply the
  generator rule with $f$ as the only generator.  Otherwise, none of
  them were introduced by the final rule in the given derivation.  If
  that final rule was the identity rule, then $\vec A$ must be empty
  (since all types in the conclusion of the identity rule are active),
  so $f$ has nullary domain and we can just add it to that application
  of the identity rule.

  If the final rule in the given derivation was the generator rule,
  then $\vec A$ must also appear at the end of its main premise.  If
  none of the types in $\vec A$ are active therein, then we can
  inductively apply $f$ to that premise; by the second clause of the
  inductive hypothesis, this does not alter the activeness of the
  other types in the premise, so we can re-apply the generator rule.
  Finally, if at least one of the types in $\vec A$ \emph{is} active
  in the main premise, then we can add $f$ to the generator rule,
  applying it alongside all the other generators, since it satisfies
  the condition that at least one of its arguments be active.
  (Technically, this may require us to first permute the consequent of
  the main premise so that $\vec A$ appears before all the other
  non-inputs to the generator rule.  This is not a problem for the
  induction since in this case we are not actually using the inductive
  hypothesis at all.)  In all cases, the second claim of the lemma is
  obvious.
\end{proof}

Now by combining \cref{thm:prop-symadm} and \cref{thm:prop-onecutadm},
we can postcompose with a generator $f\in \cG(\vec A;\vec B)$ whose
domain types $\vec A$ appear anywhere in the consequent of a judgment
$\Gamma\types (\vec{M}\mid\vec{Z}):\Delta$, in any order.

\begin{prop}\label{thm:prop-cutadm}
  Substitution is admissible: if
  $\Gamma\types (\vec{M}\mid\vec{Y}) : \Delta$ and
  $\Delta \types (\vec{N}\mid\vec{Z}) : \Phi$ are derivable, then so
  is
  $\Gamma\types (\vec{N}[\vec{M}/\Delta] \mid
  \vec{Z}[\vec{M}/\Delta],\vec{Y}) : \Phi$.
\end{prop}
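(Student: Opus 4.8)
The plan is to induct on the given derivation of $\Delta\types(\vec N\mid\vec Z):\Phi$, holding the derivation of $\Gamma\types(\vec M\mid\vec Y):\Delta$ fixed throughout; semantically this amounts to ``re-rooting'' the morphism denoted by $\vec N$ onto the one denoted by $\vec M$, in place of the identity of $\Delta$. Two preliminaries will be used silently. First, one may relabel the derivation of the second judgment so that the labels $\fa\in\fA$ attached to its nullary-domain generators are disjoint from those occurring in $\vec M,\vec Y$; this is harmless (relabelling a derivation yields another valid one) and guarantees that all such labels remain pairwise distinct in the term we eventually build. Second, an inspection of \cref{fig:sub} shows that substitution commutes with every term former and with the action of permutations and shuffles on lists --- e.g.\ $\vec{f}(\vec T)[\vec M/\Delta]\equiv\vec{f}(\vec T[\vec M/\Delta])$ and $(\sigma\vec T)[\vec M/\Delta]\equiv\sigma(\vec T[\vec M/\Delta])$ --- so that applying substitution to a generator-rule conclusion produces the generator-rule conclusion built from the substituted premise.

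For the base case, the derivation of the second judgment is a single use of the identity rule, so $\Delta$ is its context, $\vec N\equiv\sigma(\vec z,\vec{f}^{\fa},\dots,\vec{g}^{\fb})$ with $\vec z$ listing the variables of $\Delta$ and $f,\dots,g$ of nullary domain and positive codomain, and $\vec Z\equiv(h,\dots,k)$ with $h,\dots,k$ of nullary domain and codomain. Since substitution fixes generator terms and sends $\vec z\mapsto\vec M$, it suffices to start from $\Gamma\types(\vec M\mid\vec Y):\Delta$, postcompose successively with $f,\dots,g$ and then $h,\dots,k$ --- legitimate by the ``postcompose anywhere'' consequence of \cref{thm:prop-symadm,thm:prop-onecutadm}, since these generators have empty domain, the outputs of $h,\dots,k$ landing among the scalars --- reusing the labels $\fa,\dots,\fb$, and finally apply exchange on the right (\cref{thm:prop-symadm}) by $\sigma$. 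This yields exactly $\Gamma\types(\sigma(\vec M,\vec{f}^{\fa},\dots,\vec{g}^{\fb})\mid h,\dots,k,\vec Y):\Phi$.

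For the inductive step, the last rule is the generator rule of \cref{fig:props}, applied to a shorter derivation of a main premise $\Delta\types(\vec T\mid\vec W):\Psi$, with generators $f,\dots,g$ (positive codomain) and $h,\dots,k$ (nullary codomain) and permutations $\sigma,\tau$, so that $\vec N,\vec Z$ arise from $\vec T,\vec W$ by applying these generators and rearranging by $\sigma,\tau$. I would apply the inductive hypothesis to the main premise to obtain $\Gamma\types(\vec T[\vec M/\Delta]\mid\vec W[\vec M/\Delta],\vec Y):\Psi$, and then replay the final step on this judgment: postcompose one at a time with $f,\dots,g,h,\dots,k$ (each via \cref{thm:prop-onecutadm}, moving the relevant domain types to the end and back with \cref{thm:prop-symadm}); since the domains of $f,\dots,g$ occupy disjoint blocks of $\Psi$ and each generator contributes only fresh outputs, the sequential applications reproduce the simultaneous one up to a permutation. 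Finally, rearrange the non-scalar consequent by $\sigma$ with \cref{thm:prop-symadm}, and place the scalar terms into the order given by $\tau$ applied to the new scalar terms interleaved with $\vec W[\vec M/\Delta]$, followed by $\vec Y$ --- possible because each newly introduced scalar may be interleaved into the pre-existing ones by an arbitrary shuffle when postcomposing, and the target order preserves the internal order both of $\vec W[\vec M/\Delta]$ and of $\vec Y$. Because substitution commutes with the generators and with $\sigma,\tau$, the result is precisely $\Gamma\types(\vec N[\vec M/\Delta]\mid\vec Z[\vec M/\Delta],\vec Y):\Phi$.

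I expect the main --- indeed essentially the only --- subtlety to be the temptation to carry out the inductive step by simply re-applying the generator rule directly to the inductive-hypothesis judgment. This does not work: substitution need not preserve the invariant of \cref{thm:prop-tad} that the active types carry exactly the head symbols of maximal depth, since substituting a term of positive depth for a variable can push one output past another in depth, so a type that was active may cease to be and the activeness side-conditions of the generator rule can then fail. Routing everything instead through the already-established single-generator postcomposition of \cref{thm:prop-symadm,thm:prop-onecutadm}, which manages activeness internally, sidesteps this; what remains --- the choice of fresh labels, the order of the scalar terms, and checking via \cref{fig:sub} that substitution distributes over the term constructors --- is routine bookkeeping.
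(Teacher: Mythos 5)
Your proposal is correct and follows essentially the same route as the paper: induct on the derivation of $\Delta \types (\vec N\mid\vec Z):\Phi$, handle the identity-rule case by postcomposing the first judgment one by one with the nullary-domain generators and permuting via \cref{thm:prop-symadm}, and handle the generator-rule case by applying the inductive hypothesis to the main premise and then adjoining the new generators one at a time via \cref{thm:prop-onecutadm}. The extra points you make explicit --- label freshness, scalar ordering, substitution commuting with the term formers, and the observation that one cannot simply re-apply the generator rule because activeness may be destroyed (which is exactly why \cref{thm:prop-onecutadm} is needed) --- are bookkeeping the paper leaves implicit, not a different argument.
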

\begin{proof}
  We induct on the derivation of
  $\Delta \types (\vec{N}\mid\vec{Z}) : \Phi$.  If it comes from the
  identity rule, then we just have to compose
  $\Gamma\types (\vec{M}\mid\vec{Y}) : \Delta$ with some number of
  nullary-domain generators and permute its codomain; we do this one
  by one using \cref{thm:prop-onecutadm} and then
  \cref{thm:prop-symadm}.  Similarly, if it comes from the generator
  rule, we inductively compose with its main premise, then apply all
  of the new generators one by one using \cref{thm:prop-onecutadm}.
\end{proof}

As an example, suppose we want to compose the following terms:
\begin{gather}
  x:A,y:B \types (f\s1(y),k(g,f\s3(y)),f\s2(y) \mid h(x))
  : (C,D,E)\label{eq:ceg5a}
  \\\notag\\
  u:C,v:D,w:E \types (m(u,\ell\s2(w)),s,\ell\s1(w) \mid n(v))
  : (F,G,H)\label{eq:ceg5b}
\end{gather}
Here the generators are
\begin{mathpar}
  f:B \to (C,E,P)\and
  g:() \to Q\and
  h:A \to ()\and
  k:(Q,P) \to D\and
  \ell:E \to (H,R)\and
  m:(C,R) \to F\and
  n:D\to ()\and
  s:() \to G
\end{mathpar}
The heights are
\begin{mathpar}
  f=1\and g=0\and h=1 \and k=2\and \ell=1 \and m=2 \and n=1 \and s=0
\end{mathpar}
Thus, the final rule of the second derivation must apply $m$ only, so
our inductive job is to compose~\eqref{eq:ceg5a} with
\begin{equation}
  u:C,v:D,w:E \types (u,\ell\s2(w),s,\ell\s1(w) \mid n(v))
  : (C,R,G,H)\label{eq:ceg2b}
\end{equation}
Now the final rule of the second derivation must apply $\ell$ and $n$
together, so our inductive job is to compose~\eqref{eq:ceg5a} with
\begin{gather}
  u:C,v:D,w:E \types (w,v,u,s \mid\,) : (E,D,C,G)\label{eq:ceg1b}
\end{gather}
The latter is obtained from the identity rule, so our task is now to
apply \cref{thm:prop-onecutadm} to the former and the single generator
$s:()\to G$.  Peeling down the derivation of the former, we obtain
\[ x:A,y:B \types (g,f\s3(y),f\s1(y),f\s2(y) \mid h(x)) : (Q,P,C,E) \]
and then
\[ x:A,y:B \types (y,x,g \mid\,) : (B,A,Q) \]
which is also obtained from the identity rule.
The identity rule can therefore also give us
\[ x:A,y:B \types (y,x,g,s \mid\,) : (B,A,Q,G). \]
Re-applying $f,h$ and then $k$, we obtain
\[ x:A,y:B \types (g,f\s3(y),f\s1(y),f\s2(y),s \mid h(x)) : (Q,P,C,E,G) \]
and then
\[ x:A,y:B \types (f\s1(y),k(g,f\s3(y)),f\s2(y),s \mid h(x)) : (C,D,E,G). \]
Permuting this, we obtain
\[ x:A,y:B \types (f\s2(y),f\s1(y),k(g,f\s3(y)),s \mid h(x)) : (E,C,D,G). \]
as the result of composing~\eqref{eq:ceg5a} and~\eqref{eq:ceg1b}.

Backing out the induction one more step, we must apply $\ell$ and $n$
to this using \cref{thm:prop-onecutadm}.  We cannot apply $\ell$
directly since its domain $E$ is not active (its term $f\s2(y)$ has
height $1$ while the maximum height is $2$).  Thus, we back up to the
main premise
\[ x:A,y:B \types (g,f\s3(y),f\s1(y),f\s2(y),s \mid h(x)) : (Q,P,C,E,G) \]
in which $E$ is active.
Thus, we can apply $\ell$ in the same generator rule as $k$, obtaining
\begin{equation}
  x:A,y:B \types (\ell\s1(f\s2(y)),\ell\s2(f\s2(y)),f\s1(y),k(g,f\s3(y)),s \mid h(x))
  : (H,R,C,D,G).\label{eq:ceg4}
\end{equation}
Now the domain $D$ of the generator $n$ \emph{is} active, so we can
directly apply it with another generator rule, obtaining (after
permutation)
\begin{equation}
  x:A,y:B \types (f\s1(y),\ell\s2(f\s2(y)),s,\ell\s1(f\s2(y))
  \mid n(k(g,f\s3(y))),h(x)) : (C,R,G,H).\label{eq:ceg3}
\end{equation}
as the result of composing~\eqref{eq:ceg5a} and~\eqref{eq:ceg2b}.

Finally, we must compose this with $m$ using
\cref{thm:prop-onecutadm}.  Neither of the domain types $C$ and $R$ is
active in~\eqref{eq:ceg3} (in fact, \emph{no} types are active
in~\eqref{eq:ceg3}, since the last rule applied was a generator rule
whose only generator has nullary codomain), so we have to inductively
peel back to~\eqref{eq:ceg4} in which $R$ is active (though not $C$).
Thus, we can then apply $m$ in the same generator rule as $n$,
obtaining
\begin{equation}
  x:A,y:B \types (m(f\s1(y),\ell\s2(f\s2(y))),s,\ell\s1(f\s2(y))
  \mid n(k(g,f\s3(y))),h(x)) : (F,G,H)\label{eq:ceg}
\end{equation}
as our end result.

Note that the terms in~\eqref{eq:ceg} are indeed the result of
substituting $f\s1(y)$ for $u$, $k(g,f\s3(y))$ for $v$, and $f\s2(y)$
for $w$ (the terms appearing in~\eqref{eq:ceg5a}) in the terms
of~\eqref{eq:ceg5b}, and appending the scalar term $h(x)$
of~\eqref{eq:ceg5a} to the scalar terms of~\eqref{eq:ceg5b}:
\begin{align*}
  m(u,\ell\s2(w))[f\s1(y)/u, k(g(f\s3(y)))/v, f\s2(y)/w]
  &= m(f\s1(y),\ell\s2(f\s2(y)))\\
  s[f\s1(y)/u, k(g(f\s3(y)))/v, f\s2(y)/w]
  &= s \\
  \ell\s1(w)[f\s1(y)/u, k(g(f\s3(y)))/v, f\s2(y)/w]
  &= \ell\s1(f\s2(y))\\
  n(v)[f\s1(y)/u, k(g(f\s3(y)))/v, f\s2(y)/w]
  &= n(k(g(f\s3(y)))).
\end{align*}
The only choice involved in the proof of \cref{thm:prop-onecutadm} is
in how to order the scalar terms in the result.  We adopted the
convention that those associated to the terms being substituted into
come first, followed by those associated to the terms being
substituted.  The opposite convention would do as well for the
following theorem:

\begin{prop}\label{thm:prop-cutassoc}
  Composition is associative and unital.
\end{prop}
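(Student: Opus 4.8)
The plan is to reduce both claims to elementary syntactic identities about simultaneous substitution into terms (\cref{fig:sub}). The key input is the explicit description of composition from \cref{thm:prop-cutadm}: the composite of $\Gamma\types(\vec M\mid\vec X):\Delta$ with $\Delta\types(\vec N\mid\vec Y):\Phi$ is $\Gamma\types(\vec N[\vec M/\Delta]\mid \vec Y[\vec M/\Delta],\vec X):\Phi$, and the identity of a context $\Delta=(\vec x:\vec A)$ is the judgment $\Delta\types(\vec x\mid\,):\Delta$ obtained from the identity rule of \cref{fig:props} with no nullary generators and $\sigma=\id$. Since (by \cref{thm:prop-tad}, together with the axiom-free equality rule of \cref{fig:equality}) a morphism of the term prop is determined by its context, codomain, and its tuples of terms up to relabeling and permutation of the scalars, it suffices to check that these composition and identity \emph{formulas on term tuples} are associative and unital.

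For unitality I would substitute the identity into the composition formula on each side. Composing on the left with $\id_\Delta$ turns $(\vec M\mid\vec X)$ into $(\vec x[\vec M/\Delta]\mid\vec X)$, which is $(\vec M\mid\vec X)$ because $x_k[\vec M/\Delta]=M_k$ is literally the first clause of \cref{fig:sub}. Composing on the right with $\id_\Gamma$ turns $(\vec M\mid\vec X)$ into $(\vec M[\vec x/\Gamma]\mid\vec X[\vec x/\Gamma])$, so this direction reduces to the statement that substituting each variable of a context for itself acts as the identity on terms well-scoped in that context --- a routine induction over the clauses of \cref{fig:sub}.

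For associativity, given $\Gamma\types(\vec M\mid\vec X):\Delta$, $\Delta\types(\vec N\mid\vec Y):\Phi$ and $\Phi\types(\vec P\mid\vec Z):\Psi$, I would unwind the composition formula in both bracketings. One bracketing produces the term tuple $\vec P[\vec N/\Phi][\vec M/\Delta]$ with scalars $\vec Z[\vec N/\Phi][\vec M/\Delta],\vec Y[\vec M/\Delta],\vec X$; the other produces $\vec P[\vec N[\vec M/\Delta]/\Phi]$ with scalars $\vec Z[\vec N[\vec M/\Delta]/\Phi],\vec Y[\vec M/\Delta],\vec X$. Everything then hinges on the \emph{substitution lemma} $Q[\vec N/\Phi][\vec M/\Delta]\equiv Q[\vec N[\vec M/\Delta]/\Phi]$ for any term $Q$ well-scoped in $\Phi$, which I would prove by induction on $Q$ using \cref{fig:sub}: a variable $z_i$ of $\Phi$ gives $N_i[\vec M/\Delta]$ on both sides, nullary symbols and labels are inert under all substitutions, and a function application commutes with substitution, reducing to its arguments. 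Because our syntax has no variable binding, no capture-avoidance side conditions intervene. Applying this entry-by-entry to $\vec P$ and to $\vec Z$ shows the two triple composites are literally the same judgment; in particular the leftover scalars $\vec Y[\vec M/\Delta],\vec X$ occur in the same order in both, thanks to the ordering convention fixed in \cref{thm:prop-onecutadm} and \cref{thm:prop-cutadm} (``scalars of the term substituted into come first'').

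I do not expect a genuine obstacle. The one substantive ingredient is the substitution lemma, which is completely standard in a binding-free term calculus; the only thing needing care is the bookkeeping of the scalar tuples and confirming that the order convention chosen in \cref{thm:prop-cutadm} is the one compatible with associativity on the nose (it is, as the remark preceding the proposition already notes) --- and in any case, failing that, the permutation rule of \cref{fig:equality} would still identify the two results.
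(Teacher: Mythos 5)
Your proposal is correct and follows essentially the same route as the paper, which likewise reduces the claim via \cref{thm:prop-tad} (derivations are determined by their terms) to the associativity and unitality of simultaneous substitution and of concatenation of scalar lists; you have merely spelled out the substitution lemma and the scalar bookkeeping that the paper leaves as "evident." Your verification that the two bracketings yield literally identical scalar tuples (so the permutation rule is not even needed) matches the paper's intent.
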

\begin{proof}
  Since derivations are determined uniquely by their terms by
  \cref{thm:prop-tad}, this follows from the evident associativity and
  unitality of substitution into terms, and the associativity and
  unitality of concatenation of lists of scalar terms.
\end{proof}

Thus we have a category whose objects are contexts and whose morphisms
are derivable judgments $\Gamma\types (\vec{M}\mid\vec{Z}) : \Delta$.
However, this is not quite the underlying category of our prop: we
must quotient it by the equality rule from \cref{fig:equality}:
\begin{equation}\label{eq:prop-perm-2}
  \inferrule{\Gamma\types (\vec M \mid Z_1,\dots,Z_n) : \Delta \\
    \rho \in S_n \\
    \si : \fA \cong \fA}{\Gamma\types (\vec M \mid Z_1,\dots,Z_n) =
    (\vec M^\si \mid Z^\si_{\rho 1},\dots,Z^\si_{\rho n}) : \Delta}
\end{equation}
(which also ensures that the choice of ordering in
\cref{thm:prop-cutadm} is irrelevant).  For this we need the evident
observation:

\begin{prop}\label{thm:freeprop-cong}
  The equality rule~\eqref{eq:prop-perm-2} is a congruence on the
  category of contexts and derivable typing judgments.  That is, it is
  an equivalence relation on the morphisms that is preserved by
  composition on both sides.\qed
\end{prop}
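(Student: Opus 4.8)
The plan is to prove the two assertions in turn. For the equivalence-relation claim, I would observe that the replacements sanctioned by~\eqref{eq:prop-perm-2} are precisely the orbits of a group action. Fixing a derivable typing $\Gamma\types(\vec M\mid Z_1,\dots,Z_n):\Delta$, let the group $S_n\times\Aut(\fA)$ act on the associated term data by having $\sigma\in\Aut(\fA)$ relabel every label occurring in $\vec M$ and in the $Z_i$ and having $\rho\in S_n$ reorder the scalar list: the element $(\rho,\sigma)$ sends the data to $(\vec M^\sigma\mid Z^\sigma_{\rho 1},\dots,Z^\sigma_{\rho n})$. A short index computation confirms this is a genuine action (identities act trivially, composites act by composites), so ``related by~\eqref{eq:prop-perm-2}'' is the same as ``lying in a common orbit'', from which reflexivity (via $(\id,\id)$), symmetry (via inverses), and transitivity (via composites) are immediate. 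Note that the relation is automatically confined to parallel morphisms, since~\eqref{eq:prop-perm-2} alters neither $\Gamma$, nor $\Delta$, nor the non-scalar terms $\vec M$.

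For the congruence claim I would reduce, using the transitivity just established, to showing that precomposing or postcomposing a related pair with a fixed morphism again yields a related pair. So let $\Gamma\types(\vec M\mid\vec Y):\Delta$ and $\Delta\types(\vec N\mid\vec Z):\Phi$ be derivable, with composite $\Gamma\types(\vec N[\vec M/\Delta]\mid\vec Z[\vec M/\Delta],\vec Y):\Phi$ as constructed in \cref{thm:prop-cutadm}. Two observations do the work. First, relabeling commutes with substitution, because substitution (\cref{fig:sub}) acts on variables and never on labels; hence $(\vec N[\vec M/\Delta])^\sigma=\vec N^\sigma[\vec M^\sigma/\Delta]$ for any $\sigma:\fA\cong\fA$. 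Second, the scalar list of the composite is the concatenation $(\vec Z[\vec M/\Delta],\vec Y)$, so a permutation of $\vec Z$ reappears as the same permutation of the block of the composite's scalar list coming from $\vec Z$, while a permutation of $\vec Y$ reappears as the matching permutation of the block coming from $\vec Y$. Putting these together: if a permutation $\rho$ of $\vec Z$ and a relabeling $\tau\in\Aut(\fA)$ carry $(\vec N\mid\vec Z)$ to a judgment related to it by~\eqref{eq:prop-perm-2}, then the two corresponding composites differ by the bijection $\tau'$ that agrees with $\tau$ on the labels of $(\vec N\mid\vec Z)$ and is the identity on those of $(\vec M\mid\vec Y)$, together with the permutation of the composite's scalar list that acts as $\rho$ on the $\vec Z$-block and as the identity on the $\vec Y$-block; this difference is again an instance of~\eqref{eq:prop-perm-2}. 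The case of varying the factor $(\vec M\mid\vec Y)$ instead is entirely symmetric, now extending the given bijection by the identity on the labels of $(\vec N\mid\vec Z)$.

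The one place to be careful is the construction of the extended bijections such as $\tau'$: here I would invoke the standard freshness convention implicit in the substitution of \cref{fig:sub}, that the (finitely many) labels of the two factors being composed are disjoint, together with the fact that $\fA$ is infinite, so that a map prescribed on one finite subset of $\fA$ and as the identity on a disjoint finite subset always extends to an element of $\Aut(\fA)$. Beyond this minor bookkeeping, the whole argument is a direct unwinding of the definitions of substitution and of~\eqref{eq:prop-perm-2} --- which is why the proposition may reasonably be regarded as evident.
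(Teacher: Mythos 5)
The paper offers no proof of this proposition --- it is introduced as ``the evident observation'' and closed immediately with \qed\ --- so there is nothing to diverge from, and your argument is a correct unwinding of the definitions. Your group-action observation for the equivalence-relation part matches the paper's earlier remark that reflexivity, symmetry and transitivity may be omitted ``since permutations are already a group,'' and your care in extending the relabeling $\tau'$ by the identity on the other factor's labels (under a label-disjointness/freshness convention, so that the combined prescription is injective on a finite set and extends to a bijection of the infinite $\fA$) addresses the one genuinely delicate point, which the paper itself glosses over.
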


Therefore, the quotient by this equality judgment is again a category
whose objects are the contexts (i.e.\ finite lists of types).

\begin{thm}\label{thm:prop-moncat}
  The contexts and derivable term judgments in the type theory for the
  free prop generated by \cG, modulo the equality
  rule~\eqref{eq:prop-perm-2}, form a symmetric strict monoidal
  category.
\end{thm}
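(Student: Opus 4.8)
The plan is to build the symmetric strict monoidal structure on top of the category $\cP$ that we already have from \cref{thm:prop-cutadm}, \cref{thm:prop-cutassoc} and \cref{thm:freeprop-cong}: objects are contexts, morphisms are derivable judgments $\Gamma\types(\vec M\mid\vec Z):\Delta$ modulo~\eqref{eq:prop-perm-2}. On objects I would take the monoidal product to be concatenation of contexts and the unit to be the empty context $()$; the monoid of objects is then by construction the free monoid on the types of $\cG$, and strict associativity and unitality at the level of objects are immediate. Throughout I would work with de Bruijn indices so that there are no variable clashes, and treat clashes among the $\fA$-labels of nullary-domain generators as harmless, since~\eqref{eq:prop-perm-2} identifies terms that differ only by a relabeling $\si:\fA\cong\fA$.

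The one genuinely inductive ingredient is a \emph{context-extension lemma}: if $\Gamma\types(\vec M\mid\vec Y):\Delta$ is derivable and $\Xi$ is a context disjoint from $\Gamma$, then both $\Gamma,\Xi\types(\vec M,\vec z\mid\vec Y):\Delta,\Xi$ and $\Xi,\Gamma\types(\vec z,\vec M\mid\vec Y):\Xi,\Delta$ are derivable, where $\vec z$ lists the variables of $\Xi$. I would prove this by induction on the given derivation, exactly in the style of \cref{thm:prop-onecutadm}: in the identity rule one simply enlarges the block of variables, while in the generator rule one adjoins $\vec z:\Xi$ to the context of the main premise, threads $\vec z$ through as inactive terms, and re-applies the rule, preceded if necessary by a permutation putting $\vec z$ in the required position. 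Call the resulting morphisms $f\otimes\id_\Xi$ and $\id_\Xi\otimes f$. Because derivations are determined by their terms (\cref{thm:prop-tad}), equalities of morphisms reduce to equalities of terms, and I would use this to check that $f\otimes\id_\Xi$ is functorial in $f$ (and compatibly so in $\Xi$), that $f\otimes\id_{()}=f=\id_{()}\otimes f$, and that $(f\otimes\id_{\Delta'})\circ(\id_\Gamma\otimes g)=(\id_\Delta\otimes g)\circ(f\otimes\id_{\Gamma'})$ for $f:\Gamma\to\Delta$ and $g:\Gamma'\to\Delta'$. I would then \emph{define} $f\otimes g$ to be this common composite; on terms it is simply $(\vec M,\vec N\mid\vec Y,\vec Z)$ after disjoint renaming.

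Bifunctoriality of $\otimes$ (including the interchange law $(f'\circ f)\otimes(g'\circ g)=(f'\otimes g')\circ(f\otimes g)$) and strict associativity and unitality of $\otimes$ on morphisms then all reduce, again via \cref{thm:prop-tad}, to the evident associativity and unitality of simultaneous substitution into terms and of concatenation of lists of terms --- strict in every respect except the order of the scalar components, which is exactly the slack absorbed by~\eqref{eq:prop-perm-2}. For the symmetry I would take $\beta_{\Gamma,\Delta}:\Gamma,\Delta\to\Delta,\Gamma$ to be the judgment $\vec x:\Gamma,\vec y:\Delta\types(\vec y,\vec x\mid\,):\Delta,\Gamma$, which is an instance of the identity rule with no generators and with $\sigma$ the block transposition (the order-preservation side condition on $\sigma$ being vacuous, since there are no generator outputs). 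Naturality of $\beta$ with respect to $f\otimes g$, the involutivity $\beta_{\Delta,\Gamma}\circ\beta_{\Gamma,\Delta}=\id_{\Gamma,\Delta}$, and the hexagon coherences --- which, since associators and unitors are identities, merely assert that $\beta_{\Gamma,\Delta\otimes\Xi}$ and $\beta_{\Gamma\otimes\Delta,\Xi}$ agree with the appropriate composites of braidings tensored with identities --- are once more equalities of derivable judgments with the same underlying terms, hence hold by \cref{thm:prop-tad}: for instance both sides of the first hexagon send a context $(\vec x,\vec y,\vec z)$ to $(\vec y,\vec z,\vec x)$.

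I expect the context-extension lemma to be the main obstacle, since it is essentially the only place where a real induction over derivations is required rather than a manipulation of terms licensed by \cref{thm:prop-tad}; the delicate point is the generator-rule case, where one must re-position the threaded-through variables of $\Xi$ without disturbing the activeness bookkeeping, a manoeuvre of the same flavour as those already carried out in \cref{thm:prop-symadm} and \cref{thm:prop-onecutadm}. Everything downstream --- functoriality, strictness, symmetry, coherence --- is bookkeeping that collapses, via uniqueness of derivations, to identities of terms and of list concatenations.
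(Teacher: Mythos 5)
Your proposal is correct and follows essentially the same route as the paper: concatenation on objects, whiskering $f\otimes\id$ constructed by pushing the context extension up to the identity rule, $f\otimes g$ defined as the composite of whiskerings in either order (agreeing modulo the scalar-permutation rule~\eqref{eq:prop-perm-2}), the symmetry as an instance of the identity rule, and all coherence reduced to term identities via uniqueness of derivations. Your version merely makes explicit a few points the paper leaves implicit, such as the left/right whiskering lemma, the interchange identity, and the disjoint renaming of $\fA$-labels.
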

\begin{proof}
  The monoidal structure on contexts is concatenation, with the empty
  context as unit.  To tensor morphisms, it is easiest to first tensor
  with identities: given $\Gamma\types (\vec M\mid \vec Z):\Delta$, we
  construct
  $\Gamma,\vec x:\vec A\types (\vec M,\vec x\mid \vec Z):\Delta,\vec
  A$ by inducting until we get down to the identity rule and then
  adding the variables $\vec{x}:\vec{A}$ to the context.  Now we
  obtain the tensor product of
  $\Gamma\types (\vec{M}\mid\vec{Y}) : \Delta$ and
  $\Phi\types (\vec{N}\mid \vec{Z}): \Psi$ by first tensoring with
  identities to get
  $\Gamma,\Phi \types (\vec{M},\Gamma \mid\vec{Y}): \Delta,\Phi$ and
  $\Delta,\Phi\types (\Delta,\vec{N}\mid\vec{Z}) : \Delta,\Psi$ and
  then composing to get
  $\Gamma,\Phi\types (\vec{M},\vec{N}\mid \vec{Z},\vec{Y}) :
  \Delta,\Psi$.  If we did this in the other order, we would get
  $(\vec{M},\vec{N}\mid \vec{Y},\vec{Z})$ instead, which is equal
  by~\eqref{eq:prop-perm-2}.  In particular, this implies
  functoriality of the tensor product; associativity and unitality
  follow similarly.  Finally, the symmetry isomorphism is
  $\vec x:\vec A, \vec y:\vec B \types (\vec y,\vec x) : \vec B,\vec
  A$; it is easy to verify the axioms.
\end{proof}

Thus we have a prop, which we denote $\F{}\cG$.

\begin{thm}\label{thm:prop-initial}
  $\F{}\cG$ is the free prop generated by \cG.
\end{thm}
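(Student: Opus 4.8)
The plan is to verify the universal property of the free prop directly. Recall from \cref{thm:props-monadic} the forgetful functor $U$ from props to signatures; to prove the statement it suffices to exhibit a signature morphism $\eta : \cG \to U\F{}\cG$ such that every signature morphism $\phi : \cG \to U\cP$ into (the underlying signature of) a prop $\cP$ factors uniquely as $\phi = U\hat\phi \circ \eta$ for a morphism of props $\hat\phi : \F{}\cG \to \cP$. Define $\eta$ to be the identity on objects (by \cref{thm:prop-moncat} the objects of $\F{}\cG$ are exactly the types of \cG) and to send a generator $f \in \cG(\vec A;\vec B)$ to the judgment $\vec x:\vec A \types \vec f(\vec x) : \vec B$, which is obtained by one application of the identity rule followed by one application of the generator rule.

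Given such a $\phi$, define $\hat\phi$ on objects by applying the object part $\phi_0$ of $\phi$ elementwise to lists; this is forced, and it makes $\hat\phi$ automatically strict monoidal, since in both $\F{}\cG$ and $\cP$ the tensor product of objects is concatenation of lists. On morphisms, define $\hat\phi$ by recursion on derivations, which is legitimate because by \cref{thm:prop-tad} each derivable judgment has a unique derivation. An instance of the identity rule, producing $\vec x:\vec A \types (\sigma(\vec x,\vec f^{\fa},\dots,\vec g^{\fb}) \mid h,\dots,k) : \Delta$, is sent to the symmetry isomorphism of $\cP$ associated with $\sigma$, precomposed with the tensor product of $\id_{\phi_0\vec A}$ and the morphisms $\phi f,\dots,\phi g,\phi h,\dots,\phi k$; an instance of the generator rule is sent to $\hat\phi$ of its (recursively interpreted) main premise, post-composed with the tensor product of identities and $\phi f,\dots,\phi g,\phi h,\dots,\phi k$ and then re-permuted by the symmetries named by $\sigma$ and $\tau$. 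This interpretation passes to the quotient by the equality rule \eqref{eq:prop-perm-2}, because relabeling in \fA and permuting scalar terms change it only by symmetries involving the monoidal unit of $\cP$, which are identities --- equivalently, the endomorphism monoid of the unit object is commutative.

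The substance of the proof is to check that $\hat\phi$ is a morphism of props, i.e.\ a strict symmetric monoidal functor. Preservation of identities is immediate from the identity rule. For preservation of composition, I would prove --- by inductions mirroring the proofs of \cref{thm:prop-symadm}, \cref{thm:prop-onecutadm} and then \cref{thm:prop-cutadm} --- that $\hat\phi$ intertwines the syntactic operations with their semantic counterparts: permuting the codomain of a judgment (\cref{thm:prop-symadm}) is sent to post-composition with the associated symmetry of $\cP$; post-composing a judgment with a single generator $f\in\cG(\vec A,\vec B)$ whose domain $\vec A$ appears at the end of the codomain (\cref{thm:prop-onecutadm}) is sent to post-composition with $\id \otimes \phi f$; and therefore substitution (\cref{thm:prop-cutadm}) is sent to composition in $\cP$. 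Preservation of the tensor product of morphisms and of the symmetry isomorphisms then follows from their explicit construction in the proof of \cref{thm:prop-moncat} out of substitution and the identity and symmetry judgments. That $U\hat\phi \circ \eta = \phi$ holds by the definition of $\eta$ and $\hat\phi$ on generators. For uniqueness, let $\psi : \F{}\cG \to \cP$ be any prop morphism with $U\psi\circ\eta=\phi$; then $\psi = \hat\phi$ follows by induction on the unique derivation of a judgment: on a judgment ending with the identity rule, $\psi$ is forced by its preservation of identities, tensor products, symmetries and the values $\psi(\eta f)=\phi f$ on generators; on a judgment ending with the generator rule, $\psi$ is forced by its preservation of composition and tensor products, which recover it from the shorter main premise.

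The step I expect to be the main obstacle is the bookkeeping in showing that $\hat\phi$ respects composition. Composition in $\F{}\cG$ is not a primitive operation but is assembled through the chain \cref{thm:prop-symadm}, \cref{thm:prop-onecutadm}, \cref{thm:prop-cutadm}, so the required compatibility must be threaded through each of those inductions, matching at every step the inserted permutation ($\sigma$, $\tau$, or a codomain permutation $\rho$) with the corresponding symmetry of $\cP$. No individual step is deep, but keeping the many shuffles and block-permutations aligned with the coherence data of $\cP$ --- especially in the cases where a generator with nullary domain or nullary codomain is absorbed into, or emitted from, an identity- or generator-rule node --- is where the care is needed.
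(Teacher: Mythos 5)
Your proposal is correct and follows essentially the same route as the paper's (much terser) proof: extend the signature morphism to a prop morphism by induction on the unique derivation of each judgment, check that the equality rule is forced to hold in $\cP$ (via commutativity of the endomorphisms of the unit), and then verify strict symmetric monoidal functoriality by tracking the admissible operations of \cref{thm:prop-symadm,thm:prop-onecutadm,thm:prop-cutadm} through to their semantic counterparts. You have simply filled in the details that the paper's sketch delegates to the coherence theorem and to "the definition of composition and the tensor product in $\F{}\cG$."
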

\begin{proof}
  Let \cP be a prop and $\omega:\cG\to \cP$ a morphism of signatures.
  We extend it to a morphism of props $\F{}\cG \to \cP$ by induction
  on derivations.  By the coherence theorem for symmetric monoidal
  categories (e.g.~\cite[Chapter XI]{maclane}), there is a unique
  choice at each step if we are to have a (symmetric strict monoidal)
  functor, and likewise both equality rules corresponds to actual
  equalities that must hold in \cP.  Then we prove that this actually
  is a symmetric strict monoidal functor, using the definition of
  composition and the tensor product in $\F{}\cG$.
\end{proof}

\begin{rmk}
  Since the free prop generated by a signature is unique up to
  isomorphism, it follows that our $\F{}\cG$ is isomorphic to the free
  prop on \cG presented in any other way, such as by using string
  diagrams whose edges and vertices are labeled by objects and
  morphisms of \cG respectively.  Unsurprisingly, this correspondence
  can be made more explicit: from any derivable
  $\Gamma \types \vec M : \Delta$ we can construct a labeled string
  diagrams from $\Gamma$ to $\Delta$, whose vertices are the generator
  applications and whose edges are the disjoint union of the variables
  in $\Gamma$ and the terms $f\s i(\vec N)$ appearing as subterms of
  $\vec M$.
\end{rmk}

\section{Presentations of props}
\label{sec:presentations-props}

Since the category of props is monadic over the category of signatures
by \cref{thm:props-monadic}, every prop \cP has a \emph{presentation}
in terms of signatures, i.e.\ a coequalizer diagram
\[ \F{}\cR \toto \F{}\cG \to \cP.
\]
Moreover, since $\F{}$ and its right adjoint are the identity on the
set of objects, we may assume that $\cR$ and $\cG$ both have the same
set of objects as \cP and all the morphisms are the identity on
objects.  Now by the universal property of $\F{}\cR$, the two
morphisms of props $\F{}\cR \toto \F{}\cG$ are equivalently morphisms
of signatures $\cR \toto \F{}\cG$.  Thus, once $\cG$ is given, the
additional data of $\cR$ consists of a set of pairs of parallel
morphisms in $\F{}\cG$, which is to say a set of \textbf{equality
  axioms} of the form
\[ \Gamma \types (\vec{M}\mid\vec{Y}) = (\vec{N} \mid\vec{Z}) : \Delta
\]
where both $\Gamma \types (\vec{M}\mid\vec{Y}) : \Delta$ and
$\Gamma\types (\vec{N} \mid\vec{Z}):\Delta$ are derivable in the type
theory for the free prop generated by \cG.  We obtain the \textbf{type
  theory for the prop presented by $(\cG,\cR)$} by augmenting the type
theory for the free prop on \cG by these axioms for the equality
judgment, together with the additional rules shown in
\cref{fig:equality2} (which are no longer automatic in the presence of
such axioms).

\begin{figure}
  \centering
  \begin{mathpar}
    \inferrule{\Gamma\types (\vec M\mid\vec Z) : \Delta}
    {\Gamma\types (\vec M\mid\vec Z) = (\vec M\mid\vec Z) : \Delta}
    \and
    \inferrule{\Gamma\types (\vec M\mid\vec Y) = (\vec N \mid \vec Z) : \Delta}
    {\Gamma\types (\vec N \mid \vec Z) = (\vec M\mid\vec Y) : \Delta}
    \and
    \inferrule{\Gamma\types (\vec M\mid\vec X) = (\vec N \mid \vec Y) : \Delta \\
      \Gamma\types (\vec N \mid \vec Y)  = (\vec P\mid\vec Z) : \Delta}
    {\Gamma\types (\vec M\mid\vec X) = (\vec P \mid \vec Z) : \Delta}
    \and
    \inferrule{\Gamma\types (\vec M\mid\vec X) = (\vec N \mid \vec Y) : \Delta \\
      \Delta \types (\vec P \mid \vec Z) : \Phi}
    {\Gamma\types
      (\vec P[\vec M/\Delta] \mid \vec Z[\vec M/\Delta], \vec X) =
      (\vec P[\vec N/\Delta] \mid \vec Z[\vec N/\Delta], \vec Y) : \Phi}
    \and
    \inferrule{\Gamma\types (\vec M\mid\vec X) : \Delta \\
      \Delta \types (\vec N \mid \vec Y) = (\vec P \mid \vec Z) : \Phi}
    {\Gamma\types
      (\vec N[\vec M/\Delta] \mid \vec Y[\vec M/\Delta], \vec X) =
      (\vec P[\vec M/\Delta] \mid \vec Z[\vec M/\Delta], \vec X) : \Phi}
    \and
    \inferrule{\Gamma \types (\vec M\mid\vec X) = (\vec N \mid \vec Y) : \Delta \\
      \Phi \types (\vec P\mid\vec Z) = (\vec Q \mid \vec W) : \Psi}
    {\Gamma,\Phi \types (\vec M,\vec P \mid \vec X, \vec Z) =
      (\vec N, \vec Q \mid \vec Y,\vec W) : \Delta,\Psi}
    \and
    \inferrule{\Gamma\types (\vec M \mid Z_1,\dots,Z_n) : \Delta \\
      \rho \in S_n \\
      \si : \fA \cong \fA}{\Gamma\types (\vec M \mid Z_1,\dots,Z_n) =
      (\vec M^\si \mid Z^\si_{\rho 1},\dots,Z^\si_{\rho n}) : \Delta}
  \end{mathpar}
  \caption{Rules of the equality judgment in the presence of axioms}
  \label{fig:equality2}
\end{figure}

The first three rules in \cref{fig:equality2} are the usual
reflexivity,\footnote{Actually, it is not necessary to assert
  reflexivity explicitly, since it is a special case of the
  permutation rule.} symmetry, and transitivity.  The next three are
congruence rules for precomposition, postcomposition, and the
concatenation product.  The final one is the label-renaming and
scalar-permutation rule from \cref{fig:equality}.  Note that
congruence for pre- and post-composition includes congruence under
permutations of the domain and codomain.

\begin{prop}
  For any prop presentation $(\cG,\cR)$, the equality judgment
  generated by the axioms of \cR together with the rules of
  \cref{fig:equality2} is a congruence on the prop $\F{}\cG$.  That
  is, it is an equivalence relation on morphisms preserved by
  composition and tensor product.
\end{prop}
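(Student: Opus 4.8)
The plan is to read the three defining properties of a congruence — being an equivalence relation on each hom-set, being preserved by composition, and being preserved by the monoidal product — directly off the rules of \cref{fig:equality2}, which were designed precisely to express them; the axioms of \cR merely seed the relation and will play no role in the verification, since \emph{any} relation closed under the listed rules is automatically a congruence. The only point requiring care is the interaction with the quotient by \eqref{eq:prop-perm-2} that already defines $\F{}\cG$, since a congruence \emph{on $\F{}\cG$} must be a relation on $\F{}\cG$-morphisms — equivalence classes of typing judgments — rather than on bare representatives.

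First I would note that \eqref{eq:prop-perm-2} is itself among the rules of \cref{fig:equality2} (it is the last one), so every instance of the quotient relation defining $\F{}\cG$ is a derivable equality in the augmented theory; together with the transitivity rule this shows that the generated equality is unchanged when either side is replaced by an \eqref{eq:prop-perm-2}-equivalent representative. Since, moreover, substitution of term-lists and concatenation of scalar-lists respect that quotient (this is \cref{thm:freeprop-cong}), the new equality descends to a well-defined relation on the morphisms of $\F{}\cG$. That relation is reflexive (by the reflexivity rule, or as a special case of \eqref{eq:prop-perm-2}), symmetric (symmetry rule), and transitive (transitivity rule), hence an equivalence relation on each hom-set.

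Next I would treat preservation under composition. Suppose the equality judgments asserting $q = q':\Gamma\to\Delta$ and $p = p':\Delta\to\Phi$ are derivable. Applying the postcomposition-congruence rule (the fifth in \cref{fig:equality2}) to the fixed morphism $q$ and the equal pair $p,p'$ derives $p\circ q = p'\circ q$; applying the precomposition-congruence rule (the fourth in \cref{fig:equality2}) to the equal pair $q,q'$ and the fixed morphism $p'$ derives $p'\circ q = p'\circ q'$; transitivity then yields $p\circ q = p'\circ q'$. By \cref{thm:prop-tad} together with \cref{thm:prop-cutadm}, composition in $\F{}\cG$ is exactly substitution of term-lists and concatenation of scalar-lists, which is what these rules manipulate, so nothing further is needed; and since the congruence rules as stated permit arbitrary reorderings of the domain and codomain, compatibility with the symmetries, hence with exchange on either side (\cref{thm:prop-symadm}), is subsumed.

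Finally I would treat preservation under the monoidal product. The concatenation-congruence rule (the sixth in \cref{fig:equality2}) states precisely that derivable equalities $q = q':\Gamma\to\Delta$ and $r = r':\Phi\to\Psi$ yield a derivable equality of the concatenations as morphisms $\Gamma,\Phi\to\Delta,\Psi$. In the proof of \cref{thm:prop-moncat} the tensor product of $q$ and $r$ was built not as the bare concatenation but as a composite of tensorings with identities, which one checks there to agree with the concatenation up to a permutation of the scalar terms — that is, up to an instance of \eqref{eq:prop-perm-2}; so the concatenation-congruence rule, combined with transitivity against such instances, shows that the relation is preserved by $\otimes$. I do not expect a real obstacle here: the substance of the proposition is that \cref{fig:equality2} contains ``just enough'' rules, and the proof is the corresponding bookkeeping, the only slightly delicate part being the compatibility with the pre-existing quotient \eqref{eq:prop-perm-2} addressed above.
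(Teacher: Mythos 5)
Your proof is correct and is essentially the paper's own argument written out in full: the paper disposes of this proposition in one line (``the rules of Figure 9 essentially force this to be true''), and your verification — equivalence relation from the first three rules, composition from the two composition-congruence rules plus transitivity, tensor from the concatenation rule, and descent to the quotient because \eqref{eq:prop-perm-2} is itself among the rules — is precisely the bookkeeping that line elides. Your extra care about well-definedness on $\F{}\cG$ (as opposed to on bare judgments) is a genuine point the paper leaves implicit, but it does not change the approach.
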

\begin{proof}
  The rules of \cref{fig:equality2} essentially force this to be true.
\end{proof}

Thus we obtain a prop $\F{}\langle\cG|\cR\rangle$ as the quotient of
$\F{}\cG$ by this congruence.

\begin{thm}
  $\F{}\langle\cG|\cR\rangle$ is the prop presented by $(\cG,\cR)$.
  That is, we have a coequalizer diagram in the category of props:
  \[ \F{}\cR \toto \F{}\cG \to \F{}\langle\cG|\cR\rangle. \]
\end{thm}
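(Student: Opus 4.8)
The plan is to check directly that the surjective prop morphism $q\colon\F{}\cG\to\F{}\langle\cG|\cR\rangle$ is the coequalizer of the given pair. By \cref{thm:prop-initial}, the two maps $\F{}\cR\toto\F{}\cG$ are the free extensions of the two morphisms of signatures $\cR\toto\F{}\cG$ that send each axiom-arrow of $\cR$ to the left-hand side $(\vec M\mid\vec Y)$, respectively the right-hand side $(\vec N\mid\vec Z)$, of its equality axiom $\Gamma\types(\vec M\mid\vec Y)=(\vec N\mid\vec Z):\Delta$. First I would note that $q$ coequalizes this pair: a morphism of props is determined by its restriction to a generating signature, and the two composites $\F{}\cR\toto\F{}\cG\xto{q}\F{}\langle\cG|\cR\rangle$ send each generator of $\cR$ to the $q$-images of the two sides of an axiom, which are identified by construction of $\F{}\langle\cG|\cR\rangle$.

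For the universal property, suppose $\psi\colon\F{}\cG\to\cQ$ is a prop morphism that coequalizes the pair. Unwinding the freeness of $\F{}\cR$, this says exactly that $\psi$ sends the two sides of each equality axiom of $\cR$ to the same morphism of $\cQ$. Since $q$ is the identity on objects and surjective on hom-sets, any factorization $\bar\psi$ with $\bar\psi q=\psi$ is unique and must be given on a congruence class $[f]$ by $[f]\mapsto\psi(f)$; moreover, once shown to be well defined as a function, such a $\bar\psi$ is automatically a morphism of props, being the factorization of a prop morphism through a surjective prop morphism. So the whole matter reduces to showing that $\psi$ respects the congruence, i.e.\ that whenever $\Gamma\types(\vec M\mid\vec Y)=(\vec N\mid\vec Z):\Delta$ is derivable in the type theory for the prop presented by $(\cG,\cR)$, the morphisms $\Gamma\types(\vec M\mid\vec Y):\Delta$ and $\Gamma\types(\vec N\mid\vec Z):\Delta$ have the same image under $\psi$.

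I would prove this by induction on the derivation of the equality judgment. The base case is an axiom of $\cR$, which holds precisely because $\psi$ coequalizes the pair. Reflexivity, symmetry, and transitivity are immediate from equality in $\cQ$; the congruence rules for pre- and post-composition hold because $\psi$ is a functor; the congruence rule for the concatenation product holds because $\psi$ is monoidal; and the label-renaming and scalar-permutation rule holds because $\psi$ is symmetric monoidal — reordering scalars is the effect of a symmetry on the unit object (which $\psi$ preserves, and which is in any case an identity), and relabelling elements of $\fA$ is invisible to $\psi$, just as for the analogous rule of \cref{fig:equality} already built into $\F{}\cG$. The closest thing to an obstacle here, and really the only point requiring care, is this last verification: one must confirm rule by rule that every rule of \cref{fig:equality2} reflects an operation — composition, tensoring, or a symmetry — that is preserved by every morphism of props, so that $\psi$ respects the whole generated congruence as soon as it respects the generating axioms. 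Given how the rules were designed this is routine, but it is exactly what makes the presentation work.
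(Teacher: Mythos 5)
Your proof is correct and follows essentially the same route as the paper: uniqueness of the factorization from surjectivity of the quotient map, and existence reduced to the observation that every rule of \cref{fig:equality2} is validated by an arbitrary prop morphism, so that respecting the congruence is equivalent to respecting the generating axioms, i.e.\ to coequalizing the pair. The only difference is one of detail --- you spell out the induction over equality derivations that the paper compresses into a single sentence.
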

\begin{proof}
  Since the quotient map $\F{}\cG \to \F{}\langle\cG|\cR\rangle$ is
  surjective, a prop morphism $\omega:\F{}\cG \to \cP$ factors through
  $\F{}\langle\cG|\cR\rangle$ in at most one way.  Moreover, it does
  so precisely when it identifies all pairs of tuples of terms that
  are identified by the equality judgment.  However, the equality
  rules in \cref{fig:equality2} are satisfied in any prop, so this
  happens precisely when $\omega$ respects the axioms of \cR, i.e.\
  when it coequalizes the two maps $\F{}\cR \toto \F{}\cG$.
\end{proof}

Thus, we can use our type theory to reason about structures in
arbitrary props defined by generators and axioms, and hence also in
symmetric monoidal categories (by \cref{thm:smc-prop}).

\section{Examples}
\label{sec:examples}

\subsection{Duals and traces}
\label{sec:duals-traces}

We begin by repeating the first example from the introduction more
carefully.  The \textbf{free prop generated by a dual pair} has a
generating signature $\cG$ with two objects $A$ and $A^*$ and two
morphisms $\eta:()\to (A,A^*)$ and $\ep:(A^*,A)\to ()$, and a
signature of relations \cR that imposes two axioms
\begin{mathpar}
  x:A \types (\eta\s1 \mid \ep(\eta\s2,x)) = x :A\and
  y:A^* \types (\eta\s2 \mid \ep(y,\eta\s1)) = y :A^*.
\end{mathpar}
A map from this prop to a symmetric monoidal category then reduces to
the usual notion of dual.

As suggested in the introduction, we write $\eval{M}{N}$ for
$\ep(M,N)$, and $(u,\abs{A} u) : (A,A^*)$ for $(\eta\s1,\eta\s2)$,
where $u$ is a label (i.e.\ an element of $\fA$) rather than a
variable (appearing in the context).  In this notation, the axioms are
\begin{mathpar}
  x:A \types (u \mid \eval{\abs{A}u}{x}) = x :A\and
  y:A^* \types (\abs{A} u \mid \eval{y}{u}) = y :A^*.
\end{mathpar}
Recall that $=$ is a congruence for substitution on both sides.  Thus
the first axiom means that if $\eval{\abs{A}u}{M}$ appears in the
scalars, for \emph{any term} $M:A$ not involving $u$, then it can be
removed by replacing $u$ (wherever it appears, even as a subterm of
some other term) with $M$.  This justifies regarding it as a sort of
``$\beta$-reduction for duality'' with $u$ playing the role of a
``bound variable'', although as we noted in the introduction the
``binder'' $\abs{A} u$ does not delimit the ``scope'' of $u$ at all.
Running this rule in reverse, we see that {any term} $M:A$ (appearing
even as a sub-term of some other term) can be replaced by $u$, for a
fresh label $u$, if we simultaneously add $\eval{\abs{A}u}{M}$ to the
scalars.  The other axiom is similar; we may regard it as an
``$\eta$-reduction for duality''.

If $A$ has a dual $A^*$, and $f:A\to A$, the \textbf{trace} of $f$ is the composite
\[ () \xto{\eta} (A,A^*) \xto{(f,\idfunc)} (A,A^*) \xto{\cong} (A^*,A)
  \xto{\ep} () \] In our type theory this is
\[ () \types (\,\mid \eval{\abs{A} u}{f(u)}) : ().
\]
As advertised in the introduction, we can now prove cyclicity of the
trace with a $\beta$-expansion followed by a $\beta$-reduction: for
morphisms $f:A\to B$ and $g:B\to A$, with $A$ and $B$ dualizable, we
have
\begin{align*}
\tr(f g) &\defeq (\,\mid \eval{\abs{B} y}{f(g(y))})\\
&= (\,\mid \eval{\abs{B}{y}}{f(x)},\eval{\abs{A}{x}}{g(y)})\\
&= (\,\mid \eval{\abs{A} x}{g(f(x))})\\
&\defeq \tr(g f).
\end{align*}

More generally, any $f:(Y,A) \to (Z,A)$ has a ``partial'' or ``twisted'' trace
\[ y:Y \types (f\s1(y,u) \mid \eval{\abs{A}u }{f\s2(y,u)} ) : Z.
\]
This satisfies a version of cyclicity~\cite[Lemma 4.4]{ps:symtraces}:
for any morphisms $f:(Y,A) \to (Z,B)$ and $g:(W,B) \to (X,A)$, with
$A$ and $B$ dualizable, we have
\begin{align*}
  y:Y, w:W &\types (g\s1(w,v), f\s1(y,g\s2(w,v)) \mid \eval{\abs{B} v}{f\s2(y,g\s2(w,v))}) \\
  &= (g\s1(w,v), f\s1(y,u) \mid \eval{\abs{B} v}{f\s2(y,u)}, \eval{\abs{A} u}{g\s2(w,v)})\\
  &= (g\s1(w,f\s2(y,u)), f\s1(y,u) \mid \eval{\abs{A} u}{g\s2(w,f\s2(y,u))})\\
  &: (X,Z).
\end{align*}

This general cyclicity includes in particular the \emph{sliding} axiom
for traces from~\cite{jsv:traced-moncat}.  Their other axioms become
simply syntactic identities in our type theory.  For instance,
\emph{tightening} is the statement that if we compose
$f:(Y,A) \to (Z,A)$ with $u:X\to Y$ and $v:Z\to W$ and then take its
trace:
\begin{mathpar}
  \small
  \inferrule*{
    \inferrule*{x:X \types u(x):Y \\
      y:Y, a:A \types (f\s1(y,a),f\s2(y,a)): (Z,A) \\ z:Z \types v(z):W}
    {x:X, a:A \types (v(f\s1(u(x),a)), f\s2(u(x),a)) : (W,A)}}
  {x:A \types (v(f\s1(u(x),a)) \mid \eval{\abs{A} a}{f\s2(u(x),a)}) : W}
\end{mathpar}
we get the same result as if we first take the trace of $f$ and then
compose with $u$ and $v$:
\begin{mathpar}
  \small
  \inferrule*{x:X \types u(x):Y \\
    \inferrule*{y:Y, a:A \types (f\s1(y,a),f\s2(y,a)): (Z,A)}
    {y:Y \types (f\s1(y,a) \mid \eval{\abs{A} a}{f\s2(y,a)}) : Z} \\
    z:Z \types v(z):W}
  {x:A \types (v(f\s1(u(x),a)) \mid \eval{\abs{A} a}{f\s2(u(x),a)}) : W.}
\end{mathpar}
Since the terms concluding both derivations are the same, they
represent the same morphism.  Of course, these are not actually
derivations in our type theory, since they use the admissible rule of
substitution.  The uniqueness of typing derivations means that if the
substitutions are eliminated according to \cref{thm:prop-cutadm} we
obtain the same result in both cases, which in this case is:
\begin{mathpar}
  \small
  \inferrule*{\inferrule*{\inferrule*{ }
      {x:A \types (u(x),a,\abs{A}a) : (Y,A,A^*)}}
    {x:A \types (f\s1(u(x),a), \abs{A} a, f\s2(u(x),a)) : (Z,A^*,A)}}
  {x:A \types (v(f\s1(u(x),a)) \mid \eval{\abs{A} a}{f\s2(u(x),a)}) : W.}
\end{mathpar}

\begin{rmk}
  Given any signature \cG, we can augment it by adding a specified
  dual $A^*$ for each object $A\in\cG$ along with duality data as
  above.  This yields a new signature $\cG^*$ such that $\F{}\cG^*$ is
  freely generated by \cG together with a specified dual for each of
  its object.  Thus $\F{}\cG^*$ is the free \textbf{compact closed
    prop} (i.e.\ prop in which every object has a dual) generated by
  \cG.  Any signature \cR of relations for \cG carries over to $\cG^*$
  as well, so we can construct presented compact closed props as well,
  and thereby presented compact closed monoidal categories.

  In general, a prop presentation may not have any decision procedure
  for equality or normal forms for morphisms.  However, the view of
  the zigzag equalities as ``$\beta$ and $\eta$ reductions'' suggests
  that this should be the case for some classes of presented compact
  closed props (whenever the equalities other than the zigzag
  identities can be controlled).  If true, this should include in
  particular the explicit description of the free compact closed
  monoidal category on an ordinary category from~\cite{kl:cpt}.

  Note also that by the ``Int-construction''~\cite{jsv:traced-moncat},
  any \emph{traced} symmetric monoidal category embeds
  fully-faithfully and trace-preservingly in a compact closed one.
  Thus, we can also use presented compact closed props to reason about
  traced symmetric monoidal categories.
\end{rmk}

\subsection{Well-idempotent dualizable objects are self-dual}
\label{sec:idem-selfdual}

Tim Campion asked in~\cite{campion:idem-selfdual} whether an
idempotent dualizable object in a symmetric monoidal category must be
self-dual.  A proof using string diagrams that this holds assuming
``well-idempotence'' was given by the user ``MTyson''; here we recast
this proof in our type theory.

We assume given one type $X$ with a dual $X^*$, expressed as before,
and a morphism $i : () \to X$ such that
$1_X \otimes i : X\to X\otimes X$ is an isomorphism (such an $i$ is
what makes $X$ \emph{well-idempotent}).  In our type theory, the
latter can be expressed by a term $\types i:X$ and a morphism
$x:X,y:X \types f(x,y):X$ (the inverse to $1_X \otimes i$) such that
$f(x,i)= x$ and $(x,y) = (f(x,y),i)$.  More precisely, the latter two
equalities are
\begin{align*}
  x:X &\types f(x,i) = x : X \\
  x:X, y:X &\types (x,y) = (f(x,y),i) : (X,X)
\end{align*}
but we tend to omit the contexts and types in equality axioms and
calculations when they are obvious.

Note that the equation $f(x,i)=x$ means that $i$ is a ``right unit''
for the ``binary operation'' $f$.  We now observe that it is also a
left unit:
\begin{align*}
  f(i,y)
  &= (f(u,y) \mid \eval{\abs{X} u}{i})\\
  &= (u \mid \eval{\abs{X} u}{y})\\
  &= y.
\end{align*}
Here the first line is a $\beta$-expansion and the third is a
$\beta$-reduction.  The second line uses the equality
$(x,y) = (f(x,y),i)$ in the following way: first we introduce an extra
variable to get
\[ x:X, w:X^*, y:X \types (x,w,y) = (f(x,y),w,i) : (X,X^*,X) \]
then we precompose with
\[ y:X \types (u,\abs{X}u,y) : (X,X^*,X) \]
to get
\[ y:X \types (u, \abs{X}u,y) = (f(u,y),\abs{X}u,i) : (X,X^*,X) \]
and then we post-compose with
\[ x:X, w:X^*, y:X \types (x \mid \eval{w}{y}) : X \]
to get
\[ y:X \types (u \mid \eval{\abs{X} u}{y}) = (f(u,y) \mid \eval{\abs{X} u}{i}) : X.
\]
Note that although the type-theoretic justification is a bit
complicated, at the level of terms the operation is intuitive: we
simply simultaneously substitute $u$ for $f(u,y)$ and $y$ for $i$,
wherever the latter appear as subterms.  From now on we will perform
such substitutions at term-level without further comment.

Now we define $x:X \types \phi(x) :X^*$ and $w:X^* \types \psi(w):X$ by
\begin{align*}
  \phi(x) &\defeq (\abs{X} v \mid \eval{\abs{X} u}{f(v,f(x,u))})\\
  \psi(w) &\defeq (i \mid \eval w i).
\end{align*}
Finally, we can show that $\phi$ and $\psi$ are inverse isomorphisms
(so that $X$ is isomorphic to its dual) with the following
computations:
\begin{alignat*}{2}
  \psi(\phi(x))
  &= (i \mid \eval{\abs{X} v}{i}, \eval{\abs{X} u}{f(v,f(x,u))})\\
  &= (i \mid \eval{\abs{X} u}{f(i,f(x,u))}) &\quad (\beta\text{-reduction for }v)\\
  &= (i \mid \eval{\abs{X} u}{f(x,u)}) &\quad (i\text{ is a left unit for }f)\\
  &= (u \mid \eval{\abs{X} u}{x}) &\quad ((x,u)=(f(x,u),i))\\
  &= x &\quad (\beta\text{-reduction for }u)\\
  \phi(\psi(w))
  &= (\abs{X} v \mid \eval{\abs{X} u}{f(v,f(i,u))}, \eval w i)\\
  &= (\abs{X} v \mid \eval{\abs{X} u}{f(v,u)}, \eval w i) &\quad (i\text{ is a left unit for }f)\\
  &= (\abs{X} v \mid \eval{\abs{X} u}{v}, \eval w u) &\quad ((v,u) = (f(v,u),i))\\
  &= (\abs{X} v \mid \eval w v) &\quad (\beta\text{-reduction for }u)\\
  &= w &\quad (\eta\text{-reduction for }v)\mathrlap{.}
\end{alignat*}
We do not reproduce MTyson's string diagram proof here, but the reader
is encouraged to compare it to our type-theoretic version.  Note that
this situation is partly ``topological'' in the sense of
\cref{sec:vistas} (the zigzag axioms for duality, and arguably the
unit properties $f(x,i)=x$ and $f(i,x)=x$) and partly non-topological
(the axiom $(x,y) = (f(x,y),i)$).

\subsection{Comonoids and Sweedler notation}
\label{sec:sweedler-notation}

As in ordinary cartesian (or even linear) type theory, it is easy to
use our type theory to define \textbf{monoid objects} in a symmetric
monoidal category.  The signature has one object $M$ and two morphisms
$m:(M,M) \to M$ and $e:() \to M$; we usually write $m(x,y)$ infix as
$x\cdot y$ or just $x y$.  The axioms are the expected
\begin{mathpar}
  (xy)z = x(yz) \and xe=x \and ex =x.
\end{mathpar}
However, with our type theory we can also define \textbf{comonoid
  objects}, which have instead two morphisms $\comult : M\to (M,M)$
and $\ep : M \to ()$, and axioms
\begin{mathpar}
  (\comult\s1(\comult\s1(x)),\comult\s2(\comult\s1(x)),\comult\s2(x))
  =(\comult\s1(x),\comult\s1(\comult\s2(x)),\comult\s2(\comult\s2(x)))
  \and
  (\comult\s1(x)\mid\ep(\comult\s2(x)))
  \and
  (\comult\s2(x)\mid\ep(\comult\s1(x))).
\end{mathpar}

As suggested in the introduction, this becomes much more manageable if
we adopt the convention of traditional \emph{Sweedler notation} for
comonoids and comodules:
\[ (x\s1,x\s2) \defeq (\comult\s1(x),\comult\s2(x)) .
\]
Since there is no other meaning of $Z\s i$ when $Z$ is itself already
a term, it is unambiguous to regard it as meaning $\comult\s i(Z)$ as
long as no type has more than one relevant comultiplication.  (We
could formalize this with a more complicated type-checking algorithm,
but we will be content to regard it as an informal abuse of notation.)
We may regard this as a sort of ``dual'' to the shorthand notation
``$x y$'' for multiplication in a monoid, which omits the name or
symbol for the product $(M,M)\to M$.  With this notation, the axioms
of a comonoid become
\begin{mathpar}
  (x\ss11,x\ss12,x\s2) = (x\s1,x\ss21,x\ss22)
  \\
  (x\s1 \mid \ep(x\s2)) = x
  \and
  (x\s2 \mid \ep(x\s1)) = x.
\end{mathpar}
Traditional Sweedler notation also goes one step further: in view of
the coassociativity axiom, it is unambiguous to write
$(x\s1,x\s2,x\s3)$ for either $(x\ss11,x\ss12,x\s2)$ or
$(x\s1,x\ss21,x\ss22)$.  In general, if subscripts are applied to a
variable or a term that is already
subscripted,
with the maximum such subscript being $n$, then the subscript ${}\s k$
means $\overbrace{{}\s2{}\s2\dots{}\s2}^{k-1}{}\s1$ if $k<n$ and
$\overbrace{{}\s2{}\s2\dots{}\s2}^{n-1}$ if $k=n$.

Intuitively, in cartesian type theory (i.e.\ in a cartesian monoidal
category), everything can be duplicated and discarded with impunity;
whereas a comonoid in a non-cartesian monoidal category is equipped
with \emph{specified ways} in which to duplicate and discard elements.
(Indeed, a cartesian monoidal category is precisely a symmetric
monoidal category in which every object is equipped with a commutative
comonoid structure in a natural way.)  We thus view $x\s1$ and $x\s2$
as ``duplicated copies'' of $x$, the subscripts tracking the order of
duplication.  Similarly, we can regard $\ep(x)$ as ``discarding'' the
element $x$, which inspires us to introduce a sort of ``nullary
Sweedler notation''
\[ \cancel{x} \defeq \ep(x).
\]
Thus, for instance, the counit axioms of a comonoid become
$(x\s1\mid \cancel{x\s2})=x$ and $(x\s2\mid \cancel{x\s1})=x$.  Note
that by coassociativity, we also have equalities such as
$(x\s1, x\s2 \mid\cancel{x\s3}) = (x\s1,x\s2)$ and
$(x\s1, x\s3 \mid\cancel{x\s2}) = (x\s1,x\s2)$ and so on.

Such shorthands need not be restricted to comonoids either.  For
instance, traditional Sweedler notation is also used for
\emph{comodules}, which have a coaction $D \to (C,D)$ by a coalgebra
$C$.  As an example with even greater generality, suppose $M$ is
dualizable and has a coaction $\comult:M\to (A,M)$, satisfying no
axioms at all, and that $f:M\to M$ is an endomorphism that respects
$\comult$ in that $(f(x\s1),f(x\s2)) = (f(x)\s1,f(x)\s2)$.  Then we
can use this notation to verify the \emph{fixed-point property} of
traces from~\cite[Corollary 5.3]{ps:symtraces}:
\begin{align*}
  (f(f(u)\s1) \mid \eval{\abs{M} u}{f(u)\s2})
  &= (f(v\s1) \mid \eval{\abs{M} u}{v\s2}, \eval{\abs{M} v}{f(u)})\\
  &= (f(v\s1) \mid \eval{\abs{M} v}{f(v\s2)})\\
  &= (f(v)\s1 \mid \eval{\abs{M} v}{f(v)\s2}) : A.
\end{align*}

\subsection{Frobenius monoids}
\label{sec:frobenius}

A \textbf{Frobenius monoid} is an object that is both a monoid and a
comonoid and satisfies the additional axiom
\begin{equation}
  x:M, y:M \types (x\s1, x\s2 y) = (x y\s1, y\s2).\label{eq:frob}
\end{equation}
Usually this is stated as two axioms saying that both sides of the
above equation equal $((xy)\s1, (xy)\s2)$.  But this follows from the
above axiom by the following argument, which I learned
from~\cite{ps:weak-hopf}:
\begin{alignat*}{2}
  (x\s1, x\s2 y)
  &= (x\s1, x\s2 y\s1 \mid \cancel{y\s2}) &&\qquad \text{(by counitality)}\\
  &= (x\s1, x\ss21 \mid \cancel{x\ss22 y}) &&\qquad \text{(by~\eqref{eq:frob})}\\
  &= (x\ss11, x\ss12 \mid \cancel{x\s2 y}) &&\qquad \text{(by coassociativity)}\\
  &= ((x y\s1)\s1, (x y\s1)\s2 \mid \cancel{y\s2}) &&\qquad \text{(by~\eqref{eq:frob})}\\
  &= ((x y)\s1, (x y)\s2) &&\qquad\text{(by counitality)}\mathrlap{.}
\end{alignat*}
The Frobenius axiom(s) are ``topological'', so their string diagrams
get a good deal of leverage from topological intuition.  Thus,
Frobenius monoids are not a very good example for the relative
usefulness of type theory.  However, for purposes of comparison, we
include a proof of one of the basic facts about Frobenius monoids;
namely that they are self-dual, with unit and counit:
\begin{align*}
  (\eval w x) &\defeq (\,\mid \cancel{w x})\\
  (u, \abs{M} u) &\defeq (e\s1, e\s2).
\end{align*}
The axioms of a dual pair follow quite easily:
\begin{mathpar}
  (u \mid \eval{\abs{M}u}{x})
  \defeq (e\s1 \mid \cancel{e\s2 x})
  = (e x\s1 \mid \cancel{x\s2})
  = ex
  = x\\
  (\abs{M}u \mid \eval w u)
  \defeq (e\s2 \mid \cancel{w e\s1})
  = (w\s2 e \mid \cancel{w\s1})
  = w e
  = w.
\end{mathpar}

\begin{rmk}
  A \textbf{hypergraph category}~\cite{kissinger:mat-hycat} is a
  symmetric monoidal category in which every object is equipped with a
  Frobenius monoid structure that is commutative ($xy = yx$),
  cocommutative ($(x\s1,x\s2)=(x\s2,x\s1)$), and special a.k.a.\
  separable ($x\s1\,x\s2 = x$), and such that the Frobenius monoid
  structure on any tensor product $X\otimes Y$ is induced from those
  on $X$ and $Y$ in the standard way.  The definition of
  \textbf{hypergraph prop} is a bit simpler: it is just a prop in
  which every object is equipped with a commutative, cocommutative,
  special Frobenius monoid structure.  Since tensor products in a prop
  are only formal, the final condition is essentially automatic.  More
  specifically, the final condition on a hypergraph category \cC is
  not needed to show that it has an underlying hypergraph prop $U\cC$,
  but it is precisely what is needed to show that \cC is equivalent,
  as a hypergraph category, to the free symmetric monoidal category
  generated by $U\cC$.  (One might even argue that for this reason,
  hypergraph props are a more natural structure than hypergraph
  categories.)

  Now given any signature \cG, we can augment it by adding a
  commutative, cocommutative, special Frobenius monoid structure on
  every object.  This yields a new signature $\cG^\hy$ such that
  $\F{}\cG^\hy$ is the free hypergraph prop generated by \cG.  Any
  signature \cR of relations for \cG carries over to $\cG^\hy$ as
  well, so we can construct presented hypergraph props as well.
\end{rmk}

\subsection{Hopf monoids and antipodes}
\label{sec:antipodes}

A monoid object in a cartesian monoidal category is also (like every
object) a comonoid, but the monoid and comonoid structures do not
satisfy the Frobenius axiom.  Instead they satisfy the
\textbf{bimonoid axioms}:
\begin{align*}
  x:M,y:M &\types (x\s1 y\s1,x\s2 y\s2) = ((x y)\s1,(x y)\s2) :(M,M)\\
          &\types (e\s1,e\s2)=(e,e):(M,M)\\
  x:M,y:M &\types (\mid\cancel{x y}) = (\mid\cancel{x},\cancel{y}) : ()\\
  &\types (\mid\cancel{e})=():().
\end{align*}
Thus, a bimonoid object in an arbitrary monoidal category can be
regarded as a ``non-cartesian'' version of a monoid object in a
cartesian monoidal category.  Indeed, if a bimonoid is cocommutative
$(x\s1,x\s2) = (x\s2,x\s1)$ then it is a monoid object in the
cartesian monoidal category of cocommutative comonoids, and dually if
it is commutative ($xy=yx$) then it is a monoid object in the opposite
of the cocartesian monoidal category of commutative monoids.  But in
the non-commutative, non-cocommutative case we obtain something truly
new.

The analogue for a bimonoid of the inversion operation, making a
monoid into a group, is called an \textbf{antipode}: an operation
$x:M \types \inv{x}:M$ such that
\begin{align*}
  x:M &\types x\s1\, \inv{x\s2} = (e\mid\cancel{x}) :M\\
  x:M &\types \inv{x\s1}\, x\s2 = (e\mid\cancel{x}) :M.
\end{align*}
A bimonoid equipped with an antipode (a non-cartesian analogue of a
group object) is called a \textbf{Hopf monoid}.
Note that the comonoid structure is necessary in order to even
formulate the antipode axioms: we need to duplicate $x$ in order to
invert one copy of it, and on the other side of the equation we need
to discard $x$ in order to write simply ``$e$''.  In a cartesian
monoidal category, Hopf monoids are precisely group objects in the
usual sense.  However, note also that bimonoids and Hopf monoids in a
symmetric monoidal category are self-dual: such a structure on
$M\in\cC$ is equivalent to such a structure on $M\in\cC\op$.

As mentioned in the introduction, cartesian type theory can
internalize the basic fact of group theory that inverses in any monoid
are unique: if $\inv{x}$ and $\invt{x}$ are both inverses of $x$ the
\[ \inv{x} = \inv{x}e = \inv{x}(x\invt{x}) =
  (\inv{x}x)\invt{x} = e\invt{x} = \invt{x}.
\]
Therefore, a monoid object in any cartesian monoidal category admits
at most one inverse, and hence both \emph{cocommutative} Hopf monoids
and \emph{commutative} ones have unique antipodes.  Cartesian type
theory has nothing to say about Hopf monoids that are neither
commutative nor cocommutative, but in our type theory we can reproduce
essentially the same argument: if $x:M\types \inv{x}:M$ and
$x:M \types \invt{x}:M$ are both antipodes, we compute
\begin{align*}
  \inv{x}
  = \inv{x}\, e
  = (\inv{x\s1}\, e\mid\cancel{x\s2})
  = \inv{x\s1}\, x\s{21} \, \invt{x\s{22}}
  = (e \, \invt{x\s{2}}\mid\cancel{x\s1})
  = e\, \invt{x}
  = \invt{x}.
\end{align*}
Thus, even in a non-cartesian situation we can use a very similar
set-like argument, as long as we keep track of where elements get
``duplicated and discarded''.  I encourage the reader to write out a
proof of this fact using traditional arrow notation or string diagrams
for comparison.

As another example, if $H$ and $K$ are bimonoids, a \textbf{bimonoid
  homomorphism} is a morphism $x:H \types f(x) :K$ such that
\begin{mathpar}
  f(x y) = f(x)\, f(y)\and
  (f(x)\s1,f(x)\s2) = (f(x\s1), f(x\s2))\\
  f(e) = e \and
  (\mid \cancel{f(x)}) = (\mid \cancel{x}).
\end{mathpar}
Now we can show that when $H$ and $K$ are Hopf monoids, any such
bimonoid homomorphism preserves antipodes.
\begin{align*}
  f(\inv{x})
  &= (f(\inv{x\s1}) \mid \cancel{x\s2})\\
  &= (f(\inv{x\s1}) \mid \cancel{f(x\s2)})\\
  &= (f(\inv{x\s1})\, e \mid \cancel{f(x\s2)})\\
  &= f(\inv{x\s1})\, f(x\s2)\s1 \, \inv{f(x\s2)\s2}\\
  &= f(\inv{x\s1})\, f(x\ss21) \, \inv{f(x\ss22)}\\
  &= f(\inv{x\ss11})\, f(x\ss12) \, \inv{f(x\s2)}\\
  &= f(\inv{x\ss11}\, x\ss12) \, \inv{f(x\s2)}\\
  &= (f(e) \, \inv{f(x\s2)} \mid \cancel{x\s1})\\
  &= (e \, \inv{f(x\s2)} \mid \cancel{x\s1})\\
  &= (\inv{f(x\s2)} \mid \cancel{x\s1})\\
  &= \inv{f(x)}.
\end{align*}
There is a more general approach to results of this sort, which we
will return to in the next section.

\subsection{Weak bimonoids}
\label{sec:weak-bimonoids}

A \textbf{weak
  bimonoid}~\cite{ps:weak-hopf}\footnote{In~\cite{ps:weak-hopf} these
  definitions are given in the additional generality of a
  \emph{braided} monoidal category, but our type theory only applies
  to symmetric monoidal categories.  This is sufficient to include a
  number of examples, however, such as the category algebra of a
  category with finitely many objects.} is a monoid and comonoid that
satisfies, instead of the bimonoid axioms, the following weakened
ones:
\begin{align*}
  ((xy)\s1, (xy)\s2) &= (x\s1 y\s1, x\s2 y\s2)\\
  (\,\mid\cancel{xyz}) &= (\,\mid \cancel{x y\s1}, \cancel{y\s2 z})\\
  &= (\,\mid \cancel{x y\s2}, \cancel{y\s1 z})\\
  (e\s1, e\s2, e\s3) &= (e\s1, e\s2 e'\s1, e'\s2)\\
  &= (e\s1, e'\s1 e\s2 , e'\s2).
\end{align*}
For a weak bimonoid we define
\begin{align*}
  s(x) &\defeq (e\s1 \mid \cancel{e\s2 x})\\
  t(x) &\defeq (e\s1 \mid \cancel{x e\s2})\\
  r(x) &\defeq (e\s2 \mid \cancel{e\s1 x}).
\end{align*}

Many equations relating the weak bimonoid structure and the operations
$s,t,r$ are proven in~\cite{ps:weak-hopf} using string diagrams.  All
of them can also be proven in our type theory.  For instance, here is
a version of~\cite[Appendix B, eq.~(1)]{ps:weak-hopf}:
\begin{align*}
  (s(x)\s1, s(x)\s2)
  &= (e\ss11, e\ss12 \mid \cancel{e\s2 x})\\
  &= (e\s1, e\s2 \mid \cancel{e\s3 x})\\
  &= (e\s1, e\s2 e'\s1 \mid \cancel{e'\s2 x})\\
  &= (e\s1, e\s2 s(x)).
\end{align*}
Here is a version of~\cite[Appendix B, eq.~(4)]{ps:weak-hopf}:
\begin{alignat*}{2}
  ((x\, s(y))\s1,(x\, s(y))\s2)
  &= (x\s1 s(y)\s1, x\s2 s(y)\s2)\\
  &= (x\s1 e\s1, x\s2 e\s2 s(y)) &\quad \text{(using (1))}\\
  &= ((xe)\s1, (xe)\s2 s(y))\\
  &= (x\s1, x\s2 s(y)).
\end{alignat*}
And here is a version of~\cite[Appendix B, eq.~(13)]{ps:weak-hopf}.
\begin{align*}
  t(x)\, r(y)
  &= (e\s1 e'\s2 \mid \cancel{x e\s2}, \cancel{e'\s1 y})\\
  &= (e\s2 \mid \cancel{xe\s3}, \cancel{e\s1 y})\\
  &= (e\s2 e'\s1 \mid \cancel{x e'\s2}, \cancel{e\s1 y})\\
  &= r(y) \, t(x).
\end{align*}

As an example of a somewhat longer proof, here is a version
of~\cite[Appendix B, eq.~(11)]{ps:weak-hopf}.  Unlike the previous
examples, this is not a verbatim translation of their proof; theirs
builds on previous lemmas, whereas the below proof is direct.
\begingroup
\allowdisplaybreaks
\begin{alignat*}{2}
  (t(x\s1), x\s2 y)
  &= (e\s1, x\s2 y \mid \cancel{x\s1 e\s2})\\
  &= (e\s1, (xe')\s2 y \mid\cancel{(xe')\s1 e\s2})\\
  &= (e\s1, x\s2 e'\s2 y \mid \cancel{x\s1 e'\s1 e\s2})\\
  &= (e\s1, x\s2 e\s3 y \mid \cancel{x\s1 e\s2})\\
  &= (e\s1, x\s2 e\ss22 y \mid \cancel{x\s1 e\ss21})\\
  &= (e\s1, (x e\s2)\s2 y \mid \cancel{(x e\s2)\s1})\\
  &= (e\s1, xe\s2 y)\\
  &= (e\s1, x (e\s2 y)\s2 \mid \cancel{(e\s2 y)\s1})\\
  &= (e\s1, x e\ss22 y\s2 \mid \cancel{e\ss21 y\s1})\\
  &= (e\s1, x e\s3 y\s2 \mid \cancel{e\s2 y\s1})\\
  &= (e\s1, x e'\s2 y\s2 \mid \cancel{e\s2e'\s1 y\s1})\\
  &= (e\s1, x (e'y)\s2 \mid \cancel{e\s2 (e'y)\s1})\\
  &= (e\s1, x y\s2 \mid \cancel{e\s2 y\s1})\\
  &= (s(y\s1), x y\s2).
\end{alignat*}
\endgroup

If $H$ and $K$ are weak bimonoids, a \textbf{weak bimonoid
  homomorphism} $f:H\to K$ must commute with both monoid and comonoid
structures as in \cref{sec:antipodes}.  It follows that it also
commutes with $s,t,r$; this is shown in~\cite[Lemma
1.2]{ps:weak-hopf}, and rendered below for $s$ in type theory:
\begin{align*}
  f(s(x))
  &= (f(e\s1)\mid \cancel{e\s2 x})\\
  &= (f(e\s1)\mid \cancel{f(e\s2 x)})\\
  &= (f(e\s1)\mid \cancel{f(e\s2)\, f(x)})\\
  &= (f(e)\s1\mid \cancel{f(e)\s2\, f(x)})\\
  &= (e\s1\mid \cancel{e\s2\, f(x)})\\
  &= s(f(x)).
\end{align*}

A weak bimonoid is a \textbf{weak Hopf monoid} if it has an
\textbf{antipode}: a morphism $x:H \types \inv x : H$ satisfying
\begin{align*}
  \inv{x\s1}\, x\s2 &= t(x)\\
  x\s1 \, \inv{x\s2} &= r(x)\\
  \inv{x\s1}\, x\s2\, \inv{x\s3} &= \inv{x}.
\end{align*}
This implies immediately that
\begin{mathpar}
\inv{x} = \inv{x\s1}\, x\s2\, \inv{x\s3} = \inv{x\s1} \; r(x\s2)\and
\inv{x} = \inv{x\s1}\, x\s2\, \inv{x\s3} = t(x\s1) \; \inv{x\s2}.
\end{mathpar}
As for ordinary bimonoids in \cref{sec:antipodes}, antipodes for weak
bimonoids are unique: if $x:H \types \inv x : H$ and
$x:H \types \invt x : H$ are both antipodes, we have:
\begin{align*}
  \inv{x}
  &= \inv{x\s1} \; r(x\s2)\\
  &= \inv{x\s1} \, x\ss21 \, \invt{x\ss22}\\
  &= \inv{x\ss11} \, x\ss12 \, \invt{x\s2}\\
  &= t(x\s1) \, \invt{x\s2}\\
  &= \invt{x}.
\end{align*}
Similarly, we can translate the argument of~\cite[Proposition
2.2]{ps:weak-hopf} that homomorphisms $f:H\to K$ of weak bimonoids
preserve antipodes: \begingroup \allowdisplaybreaks
\begin{align*}
  f(\inv{x})
  &= f(\inv{x\s1} \; r(x\s2)) \\
  &= f(\inv{x\s1})\, f(r(x\s2)) \\
  &= f(\inv{x\s1})\, r(f(x\s2)) \\
  &= f(\inv{x\s1})\, f(x\s2)\s1 \, \inv{f(x\s2)\s2} \\
  &= f(\inv{x\s1})\, f(x\ss22) \, \inv{f(x\ss22)} \\
  &= f(\inv{x\ss11})\, f(x\ss12) \, \inv{f(x\s2)} \\
  &= f(\inv{x\ss11}\; x\ss12) \, \inv{f(x\s2)} \\
  &= f(t(x\s1)) \, \inv{f(x\s2)} \\
  &= t(f(x\s1)) \, \inv{f(x\s2)} \\
  &= t(f(x)\s1) \, \inv{f(x)\s2} \\
  &= \inv{f(x)}.
\end{align*}
\endgroup

We end by sketching another approach to these sorts of uniqueness
results that is inspired by the ``types are like sets'' perspective of
our type theory.  For monoids in the category of sets, the uniqueness
of inverses is a \emph{pointwise} property: for any element $x$, if
$y$ and $z$ are two elements that are both inverses of $x$, then
$y=z$.  However, the uniqueness of antipodes as we have proven it
above, for both bimonoids and weak bimonoids, is instead a statement
about \emph{inversion operators} that apply to all elements at once.

The pointwise statement is stronger, and this makes for simpler
proofs.  For instance, we can simply show that a monoid homomorphism
$f:H\to K$ preserves inverses \emph{of elements} in the sense that if
$y$ is an inverse of $x$ then $f(y)$ is an inverse of $f(x)$, and
conclude directly from pointwise uniqueness of inverses in $K$ that
$f$ of ``the'' inverse of $x$ coincides with ``the'' inverse of
$f(x)$.

We can also formulate a ``pointwise'' sort of uniqueness of inverses
in the general case.  Suppose $H$ is a weak Hopf monoid, with given
antipode $x:H \types \inv{x}:H$.  Suppose also we have a comonoid $X$
and a comonoid morphism $x:X \types g(x) : H$, and also a morphism
$x:X \types i(x) : H$ such that
\begin{equation}
\begin{array}{rcl}
  i(x\s1)\, g(x\s2) &=& t(g(x))\\
  g(x\s1) \, i(x\s2) &=& r(g(x))\\
  i(x\s1) \, g(x\s2) \, i(x\s3) &=& i(x).
\end{array}\label{eq:rel-antipode}
\end{equation}
We think of $g$ as an $X$-indexed family of elements of $H$, and $i$
as a similarly indexed family of ``inverses''.  Then we can calculate
\begin{mathpar}
  t(g(x\s1))\, i(x\s2)
  = i(x\s1)\, g(x\s2)\, i(x\s3)
  = i(x)
  \and
  i(x\s1)\, r(g(x\s2))
  = i(x\s1) \, g(x\s2) \, i(x\s3)
  = i(x)
\end{mathpar}
and hence
\begin{align*}
  i(x) &= i(x\s1) \, r(g(x\s2))\\
  &= i(x\s1) \, g(x\s2) \, \inv{g(x\s3)}\\
  &= t(g(x\s1)) \, \inv{g(x\s2)}\\
  &= t(g(x)\s1) \, \inv{g(x)\s2}\\
  &= \inv{g(x)}.
\end{align*}
Note that in fact it suffices for $X$ to be a co-semigroup and $g$ to
be a co-semigroup morphism.  Moreover, instead of a single
co-semigroup, $X$ could be a context of co-semigroups.

Applying this with $X\defeq H$ and $g(x) \defeq x$, we obtain
uniqueness of the antipode itself.  But we can also derive
preservation of the antipode by a weak bimonoid homomorphism
$f:H\to K$, by taking $X\defeq H$ and $g\defeq f$ with
$i(x)\defeq f(\inv{x})$.  We simply check that this satisfies the
three properties~\eqref{eq:rel-antipode}:
\begin{align*}
  f(\inv{x\s1}) \, f(x\s2)
  &= f(\inv{x\s1}\, x\s2)\\
  &= f(t(x))\\
  &= t(f(x)).\\
  f(x\s1)\, f(\inv{x\s2})
  &= f(x\s1 \, \inv{x\s2})\\
  &= f(r(x))\\
  &= r(f(x)).\\
  f(\inv{x\s1}) \, f(x\s2) \, f(\inv{x\s3})
  &= f(\inv{x\s1} \, x\s2 \, \inv{x\s3})\\
  &= f(\inv{x}).
\end{align*}
Then the above argument shows immediately that
$f(\inv{x}) = \inv{f(x)}$, as desired.

As a final example, we show that for a weak Hopf monoid $H$, the
antipode is a weak monoid anti-homomorphism.  (Since Hopf monoids are
self-dual, this implies that the antipode is also a comonoid
anti-homomorphism, which was proven by a long string diagram
calculation in~\cite[Proposition 2.3]{ps:weak-hopf}).  Consider how we
prove the analogous statement in the category of sets, that inversion
in a group is a monoid anti-homomorphism.  Arguably the most natural
way is to simply check that $y^{-1} x^{-1}$ is an inverse of $x y$:
\begin{gather*}
  (y^{-1} x^{-1})(x y) = y^{-1} (x^{-1} x) y = y^{-1} e y = y^{-1} y = e\\
  (x y)(y^{-1} x^{-1}) = x (y y^{-1}) x^{-1} = x e x^{-1} = x x^{-1} = e
\end{gather*}
and conclude by uniqueness of inverses that $y^{-1} x^{-1} = (x y)^{-1}$.

Using our notion of pointwise uniqueness, we can reproduce this inside
our type theory to apply to weak Hopf monoids.  We take
$X \defeq (H,H)$, with its induced comonoid structure
$x:H, y:H \types (x\s1, y\s1, x\s2, y\s2) : (H,H,H,H)$.  We define
$g(x,y) \defeq xy$, which is a co-semigroup morphism (though not a
comonoid morphism) by the first axiom of a weak bimonoid, and
$i(x,y) \defeq \inv{y}\, \inv{x}$, and check the three
properties~\eqref{eq:rel-antipode}:
\begin{alignat*}{2}
  i((x,y)\s1) \, g((x,y)\s2)
  &= \inv{y\s1}\, \inv{x\s1} \, x\s2 \, y\s2\\
  &= \inv{y\s1}\, t(x) \, y\s2\\
  &= \inv{(t(x)\, y)\s1}\, (t(x)\, y)\s2 &\quad (4)\\
  &= t(t(x)\, y)\\
  &= t(xy) &\quad (3)\\
  &= t(g(x,y)).
\end{alignat*}
Here the line labeled (4) uses the so-numbered equation
$(y\s1,t(x)\, y\s2) = ((t(x)\,y)\s1,(t(x)\,y)\s2)$
from~\cite{ps:weak-hopf}, and similarly the line (3) is an instance of
their equation (3).  The next calculation is dual.
\begin{alignat*}{2}
  g((x,y)\s1) i((x,y)\s2)
  &= x\s1 \, y\s1 \, \inv{y\s1}\, \inv{x\s2}\\
  &= x\s1 \,r(y)\, \inv{x\s2}\\
  &= (x\,r(y))\s1\, \inv{(x\,r(y))\s2}\\
  &= r(x\,r(y))\\
  &= r(xy)\\
  &= r(g(x,y)).
\end{alignat*}
The final calculation uses the commutativity of $t$ with $r$, equation
(13) from~\cite{ps:weak-hopf} which we also proved above.
\begin{alignat*}{2}
  i((x,y)\s1) \, g((x,y)\s2) \, i((x,y)\s3)
  &= \inv{y\s1}\, \inv{x\s1} \, x\s2 \, y\s2 \, \inv{y\s3}\, \inv{x\s3}\\
  &= \inv{y\s1} \, t(x\s1) \, r(y\s2) \, \inv{x\s2}\\
  &= \inv{y\s1} \, r(y\s2)\, t(x\s1) \, \inv{x\s2}&\quad (13)\\
  &= \inv{y} \, \inv{x}\\
  &= i(x,y).
\end{alignat*}
We leave further applications to the interested reader.


\begin{references*}

\bibitem[Bar79]{barr:staraut}
Michael Barr.
\newblock {\em $\ast$-autonomous categories}, volume 752 of {\em Lecture Notes
  in Mathematics}.
\newblock Springer, 1979.

\bibitem[Bar91]{barr:staraut-ll}
Michael Barr.
\newblock *-autonomous categories and linear logic.
\newblock {\em Mathematical Structures in Computer Science}, 1(2):159–178,
  1991.

\bibitem[BCR18]{bcr:props}
John~C. Baez, Brandon Coya, and Franciscus Rebro.
\newblock Props in network theory.
\newblock {\em Theory and Applications of Categories}, 33(25):727--783, 2018.
\newblock arXiv:1707.08321.

\bibitem[BCST96]{bcst:natded-coh-wkdistrib}
R.~Blute, R.~Cockett, R.~Seely, and T.~Trimble.
\newblock Natural deduction and coherence for weakly distributive categories.
\newblock {\em J. Pure and Appl. Algebra}, 113:229--296, 1996.

\bibitem[Cam17]{campion:idem-selfdual}
Tim Campion.
\newblock If a $\otimes$-idempotent object has a dual, must it be self-dual?
\newblock MathOverflow, 2017.
\newblock \url{https://mathoverflow.net/q/277991} (version: 2017-08-09).

\bibitem[CPT16]{cpt:ll-session}
Lu{\'i}s Caires, Frank Pfenning, and Bernardo Toninho.
\newblock Linear logic propositions as session types.
\newblock {\em Mathematical Structures in Computer Science}, 26(3):367–423,
  2016.

\bibitem[CS97]{cs:wkdistrib}
Robin Cockett and Robert Seely.
\newblock Weakly distributive categories.
\newblock {\em Journal of Pure and Applied Algebra}, 114(2):133--173, 1997.
\newblock Corrected version available at
  \url{https://www.math.mcgill.ca/rags/linear/wdc-fix.pdf}.

\bibitem[Dun06]{duncan:types-qcomp}
Ross Duncan.
\newblock {\em Types for quantum computing}.
\newblock PhD thesis, Oxford University Computing Laboratory, 2006.

\bibitem[EL89]{el:draw-digraph}
P.~Eades and Xuemin Lin.
\newblock How to draw a directed graph.
\newblock {\em [Proceedings] 1989 IEEE Workshop on Visual Languages}, pages
  13--17, 1989.

\bibitem[Has97]{hasegawa:thesis}
Masahito Hasegawa.
\newblock {\em Models of Sharing Graphs: A Categorical Semantics of let and
  letrec}.
\newblock PhD thesis, University of Edinburgh, 1997.
\newblock
  \url{http://www.kurims.kyoto-u.ac.jp/~hassei/papers/ECS-LFCS-97-360.pdf}.

\bibitem[Joh02]{ptj:elephant}
Peter~T. Johnstone.
\newblock {\em Sketches of an Elephant: A Topos Theory Compendium: Volumes 1
  and 2}.
\newblock Number~43 in Oxford Logic Guides. Oxford Science Publications, 2002.

\bibitem[JS91]{js:geom-tenscalc-i}
Andr{\'e} Joyal and Ross Street.
\newblock The geometry of tensor calculus. {I}.
\newblock {\em Adv. Math.}, 88(1):55--112, 1991.

\bibitem[JSV96]{jsv:traced-moncat}
Andr{\'e} Joyal, Ross Street, and Dominic Verity.
\newblock Traced monoidal categories.
\newblock {\em Math. Proc. Cambridge Philos. Soc.}, 119(3):447--468, 1996.

\bibitem[Kis14]{kissinger:mat-hycat}
Aleks Kissinger.
\newblock Finite matrices are complete for (dagger-)hypergraph categories.
\newblock arXiv:1406.5942, 2014.

\bibitem[KL80]{kl:cpt}
G.~M. Kelly and M.~L. Laplaza.
\newblock Coherence for compact closed categories.
\newblock {\em J. Pure Appl. Algebra}, 19:193--213, 1980.

\bibitem[Mac65]{maclane:cat-alg}
Saunders MacLane.
\newblock Categorical algebra.
\newblock {\em Bull. Amer. Math. Soc.}, 71:40--106, 1965.

\bibitem[ML98]{maclane}
Saunders Mac~Lane.
\newblock {\em Categories For the Working Mathematician}, volume~5 of {\em
  Graduate Texts in Mathematics}.
\newblock Springer, second edition, 1998.

\bibitem[Ong96]{ong:sem-classical}
C.-H.~L. Ong.
\newblock A semantic view of classical proofs -- type-theoretic, categorical,
  and denotational characterizations (extended abstract).
\newblock In {\em Proceedings of LICS '96}, pages 230--241. IEEE Press, 1996.

\bibitem[PLC17]{co:flang-cyclic}
Jovana~Obradovi\'{c} Pierre-Louis~Curien.
\newblock A formal language for cyclic operads.
\newblock {\em Higher Structures}, 1(1), 2017.
\newblock arXiv:1602.07502.

\bibitem[PR84]{pr:spinors}
Roger Penrose and Wolfgang Rindler.
\newblock {\em Spinors and space-time. {V}ol. 1}.
\newblock Cambridge Monographs on Mathematical Physics. Cambridge University
  Press, Cambridge, 1984.
\newblock Two-spinor calculus and relativistic fields.

\bibitem[PS09]{ps:weak-hopf}
Craig Pastro and Ross Street.
\newblock Weak {Hopf} monoids in braided monoidal categories.
\newblock {\em Algebra Number Theory}, 3(2):149--207, 2009.

\bibitem[PS14]{ps:symtraces}
Kate Ponto and Michael Shulman.
\newblock Traces in symmetric monoidal categories.
\newblock {\em Expositiones Mathematicae}, 32(2):248--273, 2014.
\newblock arXiv:1107.6032.

\bibitem[Red93]{reddy:dirprolog}
Uday~S. Reddy.
\newblock A typed foundation for directional logic programming.
\newblock In E.~Lamma and P.~Mello, editors, {\em Extensions of Logic
  Programming}, pages 282--318, Berlin, Heidelberg, 1993. Springer Berlin
  Heidelberg.

\bibitem[Sel11]{selinger:graphical}
Peter Selinger.
\newblock A survey of graphical languages for monoidal categories.
\newblock In Bob Coeke, editor, {\em New Structures for Physics}, chapter~4.
  Springer, 2011.
\newblock arXiv:0908.3347.

\bibitem[Shi99]{shirahata:seqcalc-cptclosed}
Masaru Shirahata.
\newblock A sequent calculus for compact closed categories, 1999.
\newblock \url{http://www.fbc.keio.ac.jp/~sirahata/Research/cmll.ps}.

\bibitem[Sza75]{szabo:polycats}
M.E. Szabo.
\newblock Polycategories.
\newblock {\em Communications in Algebra}, 3(8):663--689, 1975.

\end{references*}

\end{document}